\newtheorem{theorem}{Theorem}[section]
\newtheorem{lemma}[theorem]{Lemma}
\newtheorem{corollary}[theorem]{Corollary}
\newtheorem{proposition}[theorem]{Proposition}
\newtheorem{lettertheorem}{Theorem}
\newtheorem{letterlemma}[lettertheorem]{Lemma}
\theoremstyle{definition}
\theoremstyle{remark}
\numberwithin{equation}{section}
\newcommand{\set}[1]{\left\{#1\right\}}
\newcommand{\abs}[1]{\left | #1\right |}
\newcommand{\nm}[1]{\left \| #1 \right \|}
\newcommand{\B}{\mathcal{B}}
\newcommand{\D}{\mathbb{D}}
\newcommand{\DD}{\widehat{\mathcal{D}}}
\newcommand{\Dd}{\widecheck{\mathcal{D}}}
\newcommand{\M}{\mathcal{M}}
\newcommand{\DDD}{\mathcal{D}}
\newcommand{\N}{\mathbb{N}}
\newcommand{\C}{\mathbb{C}}
\newcommand{\e}{\varepsilon}
\newcommand{\ep}{\varepsilon}
\renewcommand{\phi}{\varphi}
\newcommand{\T}{\mathbb{T}}
\def\VMOA{\mathord{\rm VMOA}}
\def\BMOA{\mathord{\rm BMOA}}
\newcommand{\mm}{\mathcal}
\def\a{\alpha}       \def\b{\beta}        \def\g{\gamma}
           \def\e{\varepsilon}
     \def\om{\omega}      
         \def\r{\rho}         \def\z{\zeta}
\def\G{\Gamma}
\def\omg{\widehat{\omega}}
\def\mug{\widehat{\mu}}
\renewcommand{\H}{\mathcal{H}}
\newenvironment{Prf}{\noindent{\emph{Proof of}}}
{\hfill$\Box$ }
\begin{document}
	\title[Fractional Volterra-type operator acting on Hardy space]{Fractional Volterra-type operator induced by radial weight acting on Hardy space}

\author[C. Bellavita]{Carlo Bellavita}
\address{Departament of Matem\'atica i Inform\'atica, Universitat de Barcelona, Gran Via 585, 08007 Barcelona, Spain}
\email{carlo.bellavita@gmail.com}

\author[A. moreno]{\'Alvaro Miguel Moreno}
\address{Departamento de Analisis Matem\'atico, Universidad de M\'alaga, Campus de Teatinos, 
29071 Malaga, Spain}
\email{alvarommorenolopez@uma.es}

\author[G. Nikolaidis]{ Georgios Nikolaidis}
\address{Department of Mathematics, Aristotle University of Thessaloniki, 54124, Greece}
\email{nikolaidg@math.auth.gr}

\author[J. A. Pel\'aez]{Jos\'e \'Angel Pel\'aez}
\address{Departamento de Analisis Matem\'atico, Universidad de M\'alaga, Campus de Teatinos, 
29071 Malaga, Spain}
\email{japelaez@uma.es}

\thanks{The first author is member of Gruppo Nazionale per l’Analisi Matematica, la Probabilit\`a e le loro Applicazioni (GNAMPA) of Istituto Nazionale di Alta Matematica (INdAM) and he was supported by PID2021-123405NB-I00 by
the Ministerio de Ciencia e Innovaci\'on. The research of the second and fourth is supported in part by Ministerio de Ciencia e Innovaci\'on, Spain, project PID2022-136619NB-I00; La Junta de Andaluc{\'i}a,
project FQM210.
The third author is partially supported by the Hellenic Foundation for Research and Innovation (H.F.R.I.) under the ‘2nd Call for H.F.R.I. Research Projects to
support Faculty Members \& Researchers’ (Project Number: 4662).}

\subjclass{26A33, 30H10, 30H35, 47G10, 47B10}
\keywords{Radial doubling weight, fractional derivative, Volterra-type operator, Hardy space, BMOA}

\date{\today}
\begin{abstract}
Given  a radial doubling  weight $\mu$ on the unit disc $\mathbb{D}$ of the complex plane and its odd moments  $\mu_{2n+1}=\int_0^1 s^{2n+1}\mu(s)\, ds$, we consider the fractional derivative
	$$
	D^\mu(f)(z)=\sum_{n=0}^{\infty} \frac{\widehat{f}(n)}{\mu_{2n+1}}z^n, 
	$$
 of  a function $ f(z)=\sum_{n=0}^{\infty}\widehat{f}(n)z^n$ analytic in $\mathbb{D}$.

 We  also  consider the fractional integral operator
	$
	I^\mu(f)(z)=\sum_{n=0}^{\infty} \mu_{2n+1}\widehat{f}(n)z^n, 
	$ 
and the fractional  Volterra-type operator 
	$$
	V_{\mu,g}(f)(z)= I^\mu(f\cdot D^\mu(g))(z),\quad f\in\H(\D),
	$$
 for any fixed $g\in\H(\D)$.
We prove that $V_{\mu,g}$ is bounded (compact) on a Hardy space $H^p$, $0<p<\infty$, if and only  if $g$  belongs to $\BMOA$ ($\VMOA$).  Moreover, if 
$\int_0^1 \frac{\left(\int_r^1 \mu(s)\, ds\right)^p}{(1-r)^2}\,dr=+\infty$, we prove that  $V_{\mu,g}$ belongs to the Schatten class $S_p(H^2)$ if and only if $g=0$. On 
the  other hand,  if $\frac{\left(\int_r^1 \mu(s)\, ds\right)^p}{(1-r)^2}$ is a radial doubling weight it is proved that $V_{\mu,g} \in S_p(H^2)$ if and only if 
$g$ belongs to the  Besov space $B_p$.  En route, we obtain descriptions of $H^p$, $\BMOA$, $\VMOA$ and $B_p$ in terms of the fractional derivative $D^\mu$.

\end{abstract}
	\maketitle
	\section{Introduction}
Let $\H(\D)$ denote the space of analytic functions in the unit disc $\D=\{z\in\C:|z|<1\}$
and for a fixed function $g\in\H(\D)$ we consider the integral operator given by $$V_g(f)(z)=\int_0^z f(\z)g'(\z)\,d\z, \quad f\in \H(\D).$$
The Volterra-type operator $V_g$ is  prominent  in the study of   operator theory on spaces of analytic functions during the last decades.
 It probably has its origin in the bilinear operator $\left(f,g\right)\rightarrow \int f\,g'$  introduced by  A. Calder\'on in harmonic analysis in the $60$'s for his research on commutators of singular integral operators \cite{Calderon65}.
Pommerenke was  one of the first authors of the complex function theory community to consider the operator $V_g$ \cite{Pom}, and after the seminal works of Aleman, Cima and Siskakis \cite{AC,AS, ASBergman}, the study of the operator $V_ g$ on several spaces of analytic functions
has attracted a lot of attention in recent years  \cite{Aleman:Cascante:Fabrega:Pascuas:Pelaez,GLQ,MPPW,PR} in different contexts. Recently, different generalizations of $V_g$  defined in terms of  the $n$th derivative of the symbol $g$ and  iterated integrals have been considered in  \cite{ChaPAMS2020, DuLiQu2022}.

In this manuscript we are interested in proving that the classical results obtained in \cite{AC,AS, ASBergman},  on the boundedness, compactness and membership to Schatten classes of the  Volterra-type operator $V_g$ acting on Hardy and weighted Bergman spaces,
and the related recent ones obtained in \cite{ChaPAMS2020, ChaNik2025,DuLiQu2022},
 are examples of a general phenomenon  rather than  particular cases.  With this aim let us recall several concepts and introduce the required definitions. 

The extension 
$\mu(z)=\mu(|z|)$,\, $z\in\D$, of a non-negative function $\mu\in L^1([0,1))$  such that $\mug(r)=\int_{r}^1 \mu(s)\,ds>0$ for any $r\in(0,1)$, is called a radial weight. The 
 fractional derivative, induced by radial weight $\mu$, of  a function $ f(z)=\sum_{n=0}^{\infty}\widehat{f}(n)z^n \in \H(\D)$ is given by
	$$
	D^\mu(f)(z)=\sum_{n=0}^{\infty} \frac{\widehat{f}(n)}{\mu_{2n+1}}z^n, 
	$$
where $\mu_{x}=\int_0^1 s^{x}\mu(s)\, ds$, $x\ge 0$ are the moments of  $\mu$.
 We will also  consider the fractional integral operator
	$
	I^\mu(f)(z)=\sum_{n=0}^{\infty} \mu_{2n+1}\widehat{f}(n)z^n, 
	$ 
and the operator
	$$
	V_{\mu,g}(f)(z)= I^\mu(f\cdot D^\mu(g))(z),\quad f\in\H(\D),
	$$
 for any fixed $g\in\H(\D)$ and a radial weight $\mu$.

It is worth mentioning  that the fractional derivative $D^\mu$ was introduced in  \cite{MorenoPelRosa,PelRosa} and  if $\mu$ is a standard weight $\mu(z)=\beta(1-|z|^2)^{\beta-1}$, $\b >0$, then
$$D^{\mu}(f)(z)=D^{\beta}(f)(z)=\frac{2}{\G(\beta+1)}\sum\limits_{n=1}^{\infty} \frac{\G (n+\beta+1)}{\G(n+1)} \widehat{f}(n) z^n, 
$$
which basically coincides with the fractional derivative of order $\b>0$  introduced by Hardy and Littlewood in \cite[p. 409]{HLMathZ}.

 The Hardy space $H^p$ consists of $f\in\H(\D)$ for which
$
    \|f\|_{H^p}=\sup_{0<r<1}M_p(r,f)<\infty$,
where
    $$
    M_p(r,f)=\left (\frac{1}{2\pi}\int_0^{2\pi}
    |f(re^{i\theta})|^p\,d\theta\right )^{\frac{1}{p}},\quad 0<p<\infty,
    $$
and 
    $$
    M_\infty(r,f)=\max_{0\le\theta\le2\pi}|f(re^{i\theta})|.
    $$ 
It was  proved in \cite{AC,AS} that the classical Volterra-type operator $V_g$
is bounded on $H^p$, $0<p<\infty$, if and only if $g\in\BMOA$. 
The space $BMOA$ consists  of $f\in H^1$ such that the function of its boundary values $f(e^{i\theta})$ belongs to $\text{BMO}(\T)$.  The space $\BMOA$ can be equipped with different norms, we will write
\begin{equation}\label{eq:normaBMOA}
\nm{f}_{\BMOA}^2= \abs{f(0)}^2 + \sup_{a\in\D} \frac{\int_{S(a)}\abs{f'(z)}^2 (1-|z|^2)\,dA(z)}{1-\abs{a}}
\end{equation}
where  $dA(z)=\frac{dx\,dy}{\pi}$ is the normalized Lebesgue area measure on $\D$ and
  $$S(a)=\set{z\in\D: \abs{\arg{a}-\arg{z}}\leq \frac{1-\abs{a}}{2}, 1-\abs{z}\leq 1-\abs{a}},\quad  a\in \D\setminus\{0\},\quad  S(0)=\D,$$
is called the Carleson square induced by $a$. We recall the reader that a positive Borel measure $\mu$ on $\D$ is called a classical Carleson measure  if 
$\sup_{a\in\D}  \frac{\mu(S(a))}{1-\abs{a}}<\infty$, therefore $f\in \BMOA$ if and only if $|f'(z)|^2(1-|z|^2)\,dA(z)$ is a  classical Carleson measure.
Moreover, for any radial weight $\mu$ we consider the space $\BMOA_\mu$ of $g\in \H(\D)$ such that
 $$\|g\|^2_{\BMOA_\mu}=\sup_{a\in\D}\frac{\int_{S(a)}\abs{D^\mu(g)(z)}^2\frac{\mug(z)^2}{1-\abs{z}}dA(z)}{1-\abs{a}}<\infty.$$

\medskip It is clear that essential features of the fractional derivative operator $D^\mu$ are strongly affected  by the asymptotic behaviour of the moments of $\mu$, which is not well-understood for a general radial weight $\mu$
\cite{BonetLuskyTaskinen,DostanicIbero,PR19}. For this reason we will assume in many of our results that $\mu$ is a radial doubling weight. 
A radial weight $\mu$ belongs to $\DD$ if there exists $C=C(\mu)>0$ such that 
	$$
	\mug(r)\le C\mug\left(\frac{1+r}{2}\right),\quad 0\le r<1,
	$$
and $\mu\in\Dd$ if
	$$
	\mug(r)\ge C\mug\left(1-\frac{1-r}{K}\right),\quad 0\le r<1,
	$$ 
for some   $K=K(\mu)>1$ and $C=C(\mu)>1$.
Write $\mathcal{D}=\DD\cap\Dd$, and simply say that $\mu$ is a radial doubling weight if $\mu\in\DDD$.
The class $\DDD$ contains the standard radial weights and  $\mu\in \DDD$ if and only if for any (or some) $\beta>0$, 
$$\mu_x \asymp x^\beta  \left(\mu_{[\beta]} \right)_{x}, \quad x\ge 1,$$
where $\mu_{[\beta]}(s)=(1-s)^\beta \mu(s)$, see $(1.2)$ $(1.3)$ and Theorem~$3$ of \cite{PR19}. The class $\DDD$
 appear in a natural way in many questions on operator theory. For instance, the Bergman projection $P_\mu$ acts as a bounded and onto operator from the space $L^\infty$ of bounded complex-valued functions to the Bloch space $\mathcal{B}=\{ f \in \H(\D): \|f\|_{\B}=|f(0)|+ \sup_{z\in \D}(1-|z|^2)|f'(z)|<\infty\}$ if and only if $\mu\in\DDD$, and the Littlewood-Paley formula
	\begin{equation*}\label{LP}
	 \int_\D|f(z)|^p \mu(z)\,dA(z)\asymp \int_\D|f'(z)|^p(1-|z|)^p\mu(z)\,dA(z)+|f(0)|^p,\quad f\in \H(\D),
	\end{equation*}
is equivalent to $\mu\in\DDD$ \cite{PR19}.  For basic properties of the aforementioned classes of radial weights, concrete nontrivial
examples and more, see \cite{PelProcedings,PR,PR19} and the relevant references therein.

\begin{theorem}\label{th:Vmubounded}
Let  $0<p<\infty$, $\mu\in\DDD$ and $g\in\H(\D)$. Then,  the following conditions are equivalent:
\begin{enumerate}
\item[\rm(i)] $V_{\mu,g}$ is bounded on $H^p$; 
\item[\rm(ii)] $g\in \BMOA_\mu$;
 \item[\rm(iii)] $g\in \BMOA$.
\end{enumerate}
Moreover,
\begin{equation*}\label{eq:normVmug}
\| V_{\mu,g}\|_{H^p\to H^p}\asymp \|g\|_{\BMOA_\mu}\asymp \nm{g}_{\BMOA}.
\end{equation*}
\end{theorem}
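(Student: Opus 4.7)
The plan is to reduce the theorem to an algebraic identity, an $H^p$-description of analytic functions through $D^\mu$, and a standard Carleson embedding. The coefficient definitions give $D^\mu\circ I^\mu=\mathrm{Id}$ on $\H(\D)$, hence
\begin{equation*}
D^\mu(V_{\mu,g}(f))(z)=f(z)\,D^\mu(g)(z),\qquad V_{\mu,g}(f)(0)=f(0)\,g(0).
\end{equation*}
Up to the harmless value at the origin, $V_{\mu,g}(f)$ is therefore the unique analytic function whose $\mu$-fractional derivative is the pointwise product $f\cdot D^\mu(g)$, and the problem becomes to recover the $H^p$-norm of such a function from that product.

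For this I would invoke a tent-space description of $H^p$ through $D^\mu$, one of the preliminary results announced in the introduction: for $\mu\in\DDD$ and $0<p<\infty$,
\begin{equation*}
\nm{F}_{H^p}^p \asymp \abs{F(0)}^p + \int_{\T}\left(\int_{\G(\z)}\abs{D^\mu(F)(z)}^2\frac{\mug(z)^2}{(1-\abs{z})^2}\,dA(z)\right)^{p/2}\abs{d\z},
\end{equation*}
where $\G(\z)$ is a non-tangential approach region at $\z\in\T$. The exponent $-2$ is forced by the requirement that, via Fubini, the $L^2$-average in $\z$ recover $\int_\D\abs{D^\mu F(z)}^2\mug(z)^2/(1-\abs{z})\,dA(z)$, which by Parseval and the moment asymptotic $\mu_{2n+1}\asymp\mug(1-1/n)$ for $\mu\in\DDD$ is comparable to $\nm{F}_{H^2}^2$. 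Applied to $F=V_{\mu,g}(f)$ and combined with the product identity above, boundedness of $V_{\mu,g}$ on $H^p$ translates into the embedding
\begin{equation*}
\int_{\T}\left(\int_{\G(\z)}\abs{f(z)}^2\,d\nu_g(z)\right)^{p/2}\abs{d\z}\lesssim\nm{f}_{H^p}^p,\quad d\nu_g(z)=\abs{D^\mu(g)(z)}^2\frac{\mug(z)^2}{(1-\abs{z})^2}\,dA(z).
\end{equation*}

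I would then invoke the classical tent-space / Carleson characterisation: for every $0<p<\infty$ the above embedding is equivalent to $(1-\abs{z})\,d\nu_g$ being a classical Carleson measure on $\D$, i.e.
\begin{equation*}
\sup_{a\in\D}\frac{1}{1-\abs{a}}\int_{S(a)}\abs{D^\mu(g)(z)}^2\frac{\mug(z)^2}{1-\abs{z}}\,dA(z)<\infty,
\end{equation*}
which is exactly $g\in\BMOA_\mu$ with matching norms, giving (i)$\Leftrightarrow$(ii) together with $\nm{V_{\mu,g}}_{H^p\to H^p}\asymp\nm{g}_{\BMOA_\mu}$. The last equivalence (ii)$\Leftrightarrow$(iii) with $\nm{g}_{\BMOA_\mu}\asymp\nm{g}_{\BMOA}$ is the Carleson-square analogue of the $H^p$-description above, obtained by localising on each $S(a)$ the comparison of $\abs{D^\mu(g)(z)}^2\mug(z)^2/(1-\abs{z})$ with $\abs{g'(z)}^2(1-\abs{z})$ through the doubling of $\mu$; I would treat it as a preliminary lemma. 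The main obstacle of the whole argument is the tent-space $H^p$-description via $D^\mu$ for general $p$: the case $p=2$ reduces to Parseval as above, but for $p\neq 2$ one needs the full $g$-function / tent-space machinery adapted to radial doubling weights, together with sharp norm equivalences for $D^\mu$ on $H^p$.
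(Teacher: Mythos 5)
Your proposal is correct and follows essentially the same route as the paper: the identity $D^\mu\circ I^\mu=\mathrm{Id}$ reduces boundedness of $V_{\mu,g}$ to the embedding $I:H^p\to T^p_2(\nu_g)$ via the tent-space description of $H^p$ through $D^\mu$ (the paper's Theorem~\ref{th:main}), this embedding is characterised by the Carleson condition on $\nu_g$ (the paper's Theorem~\ref{th:cohn}, due to Cohn and Pel\'aez--R\"atty\"a--Sierra), and the equivalence $\|g\|_{\BMOA_\mu}\asymp\|g\|_{\BMOA}$ is exactly the paper's Theorem~\ref{th: caracterizacion BMOAmain}. You correctly identify that the real work lies in those two preliminary theorems, which the paper likewise proves separately in Sections~\ref{s3} and~\ref{s4}.
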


We will also obtain a description of  the symbols $g\in \H(\D)$ such that  $V_{\mu,g}$ is compact on $H^p$.
Let us recall that a classical Carleson measure $\mu$ on $\D$ is a vanishing Carleson measure if  $\lim_{|a|\to 1^-}\frac{\mu(S(a))}{1-|a|}=0$. A function $g\in \H(\D)$ belongs to $\VMOA$ if  
 $|g'(z)|^2(1-|z|^2)\,dA(z)$ is a  vanishing  Carleson measure
and $g\in\VMOA_\mu$ if  $d \nu_g(z)=\abs{D^\mu(g)(z)}^2 \frac{\mug(z)^2}{1-\abs{z}}\, dA(z)$ is a vanishing Carleson measure.
\begin{theorem}\label{th:Vmucompact}
Let  $0<p<\infty$, $\mu\in\DDD$ and $g\in\H(\D)$. Then,  the following conditions are equivalent:
\begin{enumerate}
\item[\rm(i)] $V_{\mu,g}$ is compact on $H^p$; 
\item[\rm(ii)] $g\in \VMOA_\mu$;
 \item[\rm(iii)] $g\in \VMOA$.
\end{enumerate}
\end{theorem}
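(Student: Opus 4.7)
The plan is to establish the equivalences by the cycle $(\mathrm{iii}) \Rightarrow (\mathrm{ii}) \Rightarrow (\mathrm{i}) \Rightarrow (\mathrm{iii})$, mirroring and upgrading the boundedness argument of Theorem~\ref{th:Vmubounded} to its "vanishing" counterpart. The equivalence $(\mathrm{ii}) \Leftrightarrow (\mathrm{iii})$ should come from the description of $\VMOA$ in terms of $D^\mu$ that the abstract advertises: the same local estimate that identifies the measures $|g'(z)|^2(1-|z|^2)\,dA(z)$ and $|D^\mu g(z)|^2\mug(z)^2/(1-|z|)\,dA(z)$ as Carleson measures in Theorem~\ref{th:Vmubounded}, once applied on each Carleson box $S(a)$ separately, transfers the classical vanishing Carleson property for the former to the $\mu$-vanishing property for the latter, and vice versa.

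For $(\mathrm{ii}) \Rightarrow (\mathrm{i})$, I would use the standard dilation-approximation trick. Setting $g_r(z)=g(rz)$, one checks that $D^\mu(g_r)$ extends analytically past $\overline{\D}$ (its Taylor coefficients decay geometrically, since the moments $\mu_{2n+1}$ grow at most polynomially when $\mu\in\DDD$), so $V_{\mu,g_r}$ is the operator-norm limit of the finite-rank operators $V_{\mu,q_N}$, where $q_N$ are polynomial partial sums of $g_r$; hence each $V_{\mu,g_r}$ is compact. The task reduces to proving $\|V_{\mu,g}-V_{\mu,g_r}\|_{H^p\to H^p}\to 0$ as $r\to 1^-$. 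By the quantitative part of Theorem~\ref{th:Vmubounded}, this operator norm is comparable to $\|g-g_r\|_{\BMOA_\mu}$, and the hypothesis $g\in\VMOA_\mu$ together with a standard dominated-convergence argument on the Carleson-box averages of $d\nu_{g-g_r}$ yields the required convergence.

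For $(\mathrm{i}) \Rightarrow (\mathrm{iii})$, I would test on the normalized family $f_a(z)=\bigl((1-|a|^2)/(1-\bar a z)^2\bigr)^{1/p}$, which is uniformly bounded in $H^p$ and converges to zero uniformly on compacta as $|a|\to 1^-$. Compactness of $V_{\mu,g}$ forces $\|V_{\mu,g}(f_a)\|_{H^p}\to 0$ (for $p>1$ via weak null sequences, for $0<p\le 1$ via an approximation argument combining uniform boundedness with uniform-on-compacta decay and the factorization built into the proof of Theorem~\ref{th:Vmubounded}). Running the test-function computation localized to $S(a)$ from Theorem~\ref{th:Vmubounded} in reverse then yields $\nu_g(S(a))/(1-|a|)\to 0$, i.e. $g\in\VMOA_\mu$, and the already-established $(\mathrm{ii})\Leftrightarrow(\mathrm{iii})$ finishes the loop.

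The main obstacle I expect is the operator-norm approximation $\|V_{\mu,g}-V_{\mu,g_r}\|\to 0$ in $(\mathrm{ii})\Rightarrow(\mathrm{i})$. The subtlety is that the fractional derivative $D^\mu$ does not commute neatly with dilation, so one cannot simply pull out an $(1-r)$ factor; instead, one must control $d\nu_{g-g_r}$ uniformly over all Carleson boxes and exhaust the tail $|z|$ close to $1$ using the $\VMOA_\mu$ hypothesis while controlling the central region via uniform convergence $g_r\to g$ on compacta. The small-$p$ regime $0<p\le 1$ will likely require the same tent-space or atomic-decomposition machinery implicit in the proof of Theorem~\ref{th:Vmubounded}, and is where the full strength of $\mu\in\DDD$ should be used.
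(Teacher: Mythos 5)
Your overall architecture (approximation by compact operators for sufficiency, weakly null test functions for necessity) is a legitimate classical route, and it genuinely differs from the paper's: there, one observes that $D^\mu(V_{\mu,g}(f))=f\cdot D^\mu(g)$, so Theorem~\ref{th:main} converts compactness of $V_{\mu,g}$ on $H^p$ into compactness of the embedding $I\colon H^p\to T^p_2(\nu_g)$ with $d\nu_g=|D^\mu(g)|^2\mug^2(1-|z|)^{-1}dA$, which is then settled in \emph{both} directions at once by the known characterization of compact Carleson embeddings into tent spaces (Theorem~\ref{th:cohn}(ii)); the equivalence (ii)$\Leftrightarrow$(iii) is Theorem~\ref{th: caracterizacion BMOAmain}, as you anticipate. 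Your (i)$\Rightarrow$(iii) step is essentially a re-derivation of the necessity half of Theorem~\ref{th:cohn}(ii) and can be made to work.

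There is, however, a genuine gap in your (ii)$\Rightarrow$(i). The operators $V_{\mu,q_N}$ with $q_N$ a nonzero polynomial are \emph{not} finite rank: $V_{\mu,q_N}(f)=I^\mu\bigl(f\cdot D^\mu(q_N)\bigr)$ is the composition of multiplication by the nonzero polynomial $D^\mu(q_N)$, which is injective on $\H(\D)$, with the injective diagonal operator $I^\mu$; hence $V_{\mu,q_N}$ is injective and has infinite-dimensional range. The asserted finite-rank approximation of $V_{\mu,g_r}$ therefore collapses, and you still owe a proof that $V_{\mu,h}$ is compact for some class of symbols $h$ dense in $\VMOA_\mu$. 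The natural repair --- showing that $d\nu_h$ is a vanishing Carleson measure when $D^\mu(h)$ is bounded (true, since $\int_r^1\mug(s)^2(1-s)^{-1}\,ds\to0$ for $\mu\in\Dd$) and then invoking compactness of the embedding $H^p\to T^p_2(\nu_h)$ --- already requires Theorem~\ref{th:cohn}(ii); once that tool is admitted it applies directly to $\nu_g$ for $g\in\VMOA_\mu$, making the whole dilation detour unnecessary. A side remark: $D^\mu$ \emph{does} commute with dilation, since both are diagonal with respect to the monomials ($\widehat{g_r}(n)=r^n\widehat{g}(n)$), so that particular worry in your closing paragraph is unfounded; the only real point in proving $\|g-g_r\|_{\BMOA_\mu}\to0$ is the classical fact that $g\in\VMOA$ if and only if $\|g-g_r\|_{\BMOA}\to0$, transported by the norm equivalence of Theorem~\ref{th: caracterizacion BMOAmain}.
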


\medskip
The proofs of Theorems~\ref{th:Vmubounded} and \ref{th:Vmucompact} will be built on an ad hoc equivalent norm of  $H^p$ spaces in terms of the fractional derivative $D^\mu$, so that we can get rid of the fractional integral operator $  I^\mu$ which appears in the definition of $V_{\mu,g}$,  in the study of its boundedness and compactness.

It is well-known \cite{AhernBruna1988,FefSt} that for every $n\in\N$ and $0<p<\infty$ the Hardy space $H^p$ can be described in terms of the following Calderon's formula 
	\begin{equation}\label{eq: norma calderon}
		\nm{f}_{H^p}^p \asymp \sum_{j=0}^{n-1}\abs{f^{(j)}(0)}^p + \int_{\T} \left (\int_{\Gamma(\z)} \abs{f^{(n)}(z)}^2(1-\abs{z})^{2n-2}\,dA(z) \right )^{\frac{p}{2}} \abs{d\z}
	\end{equation}
where
	$
	\Gamma(\xi)=\left\{z\in\D:|\arg z-\arg\xi|<(1-|z|)\right\}
	$
is the non-tangential approach region (cone), with vertex at $\xi\in\T=\partial\D$.
We obtain the following extension of \eqref{eq: norma calderon}.
\begin{theorem}\label{th:main}
Let $\mu$ be a radial weight and $0<p<\infty$. Then, the following conditions are equivalent:
\begin{enumerate}
\item[\rm(i)] $\mu\in\DDD$;  
\item[\rm(ii)] $$
		\nm{f}^p_{H^p} \asymp \int_\T \left [ \int_{\Gamma(\xi)} \abs{D^\mu(f)(z)}^2 \left (\frac{\mug(z)}{1-\abs{z}}\right )^2 dA(z) \right ]^{\frac{p}{2}}\abs{d\xi},\quad f\in \H(\D);
		$$
\item[\rm(iii)] $$
		 \int_\T \left [ \int_{\Gamma(\xi)} \abs{D^\mu(f)(z)}^2 \left (\frac{\mug(z)}{1-\abs{z}}\right )^2 dA(z) \right ]^{\frac{p}{2}}\abs{d\xi}
 \lesssim \nm{f}^p_{H^p}, \quad f\in \H(\D).
		$$
\end{enumerate}
\end{theorem}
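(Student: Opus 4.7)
The implication (ii)~$\Rightarrow$~(iii) is immediate, so we focus on (i)~$\Rightarrow$~(ii) and (iii)~$\Rightarrow$~(i). For (i)~$\Rightarrow$~(ii) the plan is to reduce to the classical Calder\'on area-function description \eqref{eq: norma calderon} of $H^p$ by fixing an integer $n$ sufficiently large in terms of $\mu$ and establishing the cone-wise comparison
\[
\int_{\G(\z)} \abs{D^\mu f(z)}^2 \Big(\frac{\mug(z)}{1-\abs{z}}\Big)^{\!2} dA(z) \asymp \int_{\G(\z)} \abs{f^{(n)}(z)}^2 (1-\abs{z})^{2n-2}\, dA(z),
\]
up to lower-order terms involving $\sabs{f^{(j)}(0)}$, $0\le j<n$, which are harmless for the $H^p$ equivalence. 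The core input is a representation of $D^\mu f$ as an integral transform of $f^{(n)}$: using the moment asymptotics $\mu_{2k+1}\asymp\mug(1-\tfrac{1}{k+1})$ and the factorization $\mu_x\asymp x^{-n}(\mu_{[n]})_x$ valid for every $\mu\in\DDD$ (recalled in the introduction; see also \cite{PR19}), one writes
\[
D^\mu f(z) = c_n \int_0^1 f^{(n)}(tz)\, k_\mu(t,\abs{z})\, dt + P_{n-1}(f;z),
\]
for a non-negative kernel $k_\mu$ built from $\mu$ and a polynomial $P_{n-1}$ of degree $n-1$. Schur-type estimates, together with the slow variation of $\mug$ on pseudohyperbolic discs (another consequence of $\mu\in\DDD$), then produce the two-sided area comparison above.

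The step I expect to be the main obstacle is passing from the Schur test, which naturally yields a \emph{global} $L^p(\T)$ equivalence of the two area integrals, to the \emph{cone-wise} statement. Since $k_\mu(t,\abs{z})$ is non-local, its contribution to the integral over $\G(\z)$ involves points $tz$ outside the cone; to handle this we will split $k_\mu$ into a diagonal part $(t\approx\abs{z})$, which reproduces the claimed main term inside $\G(\z)$, and an off-diagonal tail, whose contribution will be dominated by a nontangential grand-maximal function of the Calder\'on area integral and thence by $\nm{f}_{H^p}$. Making these dominations stick uniformly in the aperture of $\G(\z)$ is the delicate point.

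For (iii)~$\Rightarrow$~(i) we argue by testing. Applying (iii) to the monomials $f(z)=z^N$, polar integration yields the necessary estimate $\int_0^1 r^{2N}\mug(r)^2/(1-r)\,dr \lesssim \mu_{2N+1}^{\,2}$ for every $N\in\N$, which already forces the upper doubling $\mu\in\DD$. To extract the reverse doubling $\mu\in\Dd$ we will test (iii) on lacunary series $f=\sum_k a_k z^{n_k}$: by Paley's theorem $\nm{f}_{H^p}\asymp(\sum_k\abs{a_k}^2)^{1/2}$ for every $0<p<\infty$, while the associated area integral localises near the radii $r_k=1-1/n_k$, allowing us to read off two-sided estimates of $\mug$ at nested scales. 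Combining the two test families yields $\mu\in\DDD$ and closes the circle of implications.
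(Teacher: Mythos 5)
Your overall skeleton (reduce to Calder\'on's formula \eqref{eq: norma calderon} via a representation of $D^\mu f$ in terms of $f^{(n)}$, then test with monomials for the necessity) matches the paper's, but two of your key steps do not hold up as stated. First, the representation $D^\mu f(z)=c_n\int_0^1 f^{(n)}(tz)\,k_\mu(t,\abs{z})\,dt+P_{n-1}(f;z)$ with a \emph{non-negative} kernel is not available: matching Taylor coefficients forces the moments of $k_\mu$ to be comparable to the \emph{reciprocals} $1/(k^n\mu_{2k+1})$, and since $r\mapsto 1/\mug(r)$ is increasing it cannot arise as the tail of a positive radial density; no positivity is to be had here. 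The paper instead writes $D^\mu f$ (Corollary~\ref{prop: expresión derivada fraccionaria}) as an area integral of $f^{(n)}$ against the (complex, non-positive) Bergman kernel $B^{V_{\mu_+,n}}_z$, and controls it by H\"older's inequality together with the sharp $L^2$ estimates of $(1-\overline{z}\z)^NB^\om_z(\z)$ from \cite[Lemma~5]{PelRatBMO}. Second, the cone-wise two-sided comparison of the two area integrands is both stronger than what is true and structurally unsuited to your goal: the correct statement is an equivalence of the outer $L^{p/2}(\T)$ integrals, obtained by trading the cone $\G(\xi)$ for the global weight $\bigl(\tfrac{1-\abs{z}}{\abs{1-\overline{z}\xi}}\bigr)^\lambda$ (\cite[Lemma~4]{arsenovic}) and then applying Fubini and Forelli--Rudin estimates. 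Moreover, for the inequality $\nm{f}_{H^p}^p\lesssim(\text{area integral})$ you cannot afford error terms merely ``dominated by $\nm{f}_{H^p}$'' -- that direction would then be circular unless you produce a small constant and absorb, which your diagonal/off-diagonal splitting does not provide. There is also an unaddressed approximation issue: the identities are first valid for $f\in\H(\overline{\D})$ and the passage to general $f$ uses the tent-space limit theorems of \cite{AguiMasPelRat}.

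For (iii)$\Rightarrow$(i) your monomial test correctly yields $\int_0^1 r^{2N}\mug(r)^2(1-r)^{-1}dr\lesssim\mu_{2N+1}^2$ and hence $\mu\in\DD$, but the step extracting $\mu\in\Dd$ is missing. Lacunary series do not help: since $D^\mu$ acts diagonally on Taylor coefficients and the square function of a lacunary series decouples, Paley's theorem reduces that test exactly to the monomial condition you already have. The point you are missing is that the monomial condition \emph{alone}, combined with $\mu\in\DD$, gives $\Dd$: bound $\int_0^1 r^x\mug(r)^2(1-r)^{-1}dr$ from below by the integral over $[1-\tfrac1x,\,1-\tfrac1{Kx}]$ to get $\gtrsim\mug\bigl(1-\tfrac1{Kx}\bigr)^2\log K$, and from above by $C_2\,\mug\bigl(1-\tfrac1x\bigr)^2$ using the tested inequality and Lemma~\ref{lemma: caract Dgorro}; choosing $\log K>C_2/C_1$ yields $\mu\in\Dd$. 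Without this (or an equivalent) argument the necessity direction is incomplete.
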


Theorem~\ref{th:main} follows from Theorem~\ref{th: norma equivalente <} and Theorem~\ref{th: norma equivalente >} below. In particular, the inequality
$$\nm{f}^p_{H^p} \lesssim \int_\T \left [ \int_{\Gamma(\xi)} \abs{D^\mu(f)(z)}^2 \left (\frac{\mug(z)}{1-\abs{z}}\right )^2 dA(z) \right ]^{\frac{p}{2}}\abs{d\xi},\quad f\in \H(\D),$$ holds for any radial weight $\mu$.
It is  worth mentioning that the main ideas in the proof of Theorem~\ref{th:main} comes from \cite{DuanRattyaWang2025}.

We will also obtain a description of $\BMOA$ in terms of the fractional derivative $D^\mu$.
\begin{theorem}\label{th: caracterizacion BMOAmain}
	Let $\mu$ be a radial weight. Then, the following conditions are equivalent:
\begin{enumerate}
\item[\rm(i)] $\mu\in\DDD$;  
\item[\rm(ii)] $$
	 \|g\|_{\BMOA_\mu} \asymp \nm{g}_{\BMOA},\quad g\in\H(\D);
	$$
\item[\rm(iii)] $$
	 \|g\|_{\BMOA_\mu} \lesssim \nm{g}_{\BMOA},\quad g \in\H(\D);
	$$
\item[\rm(iv)] $\VMOA_\mu= \VMOA$ and $$
	 \|g\|_{\BMOA_\mu} \asymp \nm{g}_{\BMOA},\quad g\in\VMOA;
	$$
\item[\rm(iv)] $\VMOA_\mu\subset \VMOA$ and $$
	 \|g\|_{\BMOA_\mu} \lesssim \nm{g}_{\BMOA},\quad g\in\VMOA.
	$$
\end{enumerate}
\end{theorem}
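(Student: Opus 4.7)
The implications $(ii)\Rightarrow(iii)$, $(ii)\Rightarrow(iv)$, and $(iv)\Rightarrow(v)$ (treating the duplicated label as $(v)$) are immediate from the definitions, so the real content lies in proving $(i)\Rightarrow(ii)$ together with the two converses $(iii)\Rightarrow(i)$ and $(v)\Rightarrow(i)$. My plan is to establish the first of these as the core technical assertion and to deduce the two converses by testing against an explicit one-parameter family of $\VMOA$-symbols.

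For $(i)\Rightarrow(ii)$, the starting point is Theorem~\ref{th:main} with $p=2$: a change of the order of integration via Fubini converts the cone-area expression into the global radial formula
$$\nm{f}_{H^2}^2-|f(0)|^2 \asymp \int_\D |D^\mu f(z)|^2\, \frac{\mug(z)^2}{1-|z|}\,dA(z),\qquad f\in\H(\D),$$
which, combined with the classical Littlewood-Paley identity for $H^2$, furnishes the global equivalence between the measures $d\lambda_g=|g'(z)|^2(1-|z|)\,dA(z)$ and $d\nu_g=|D^\mu g(z)|^2\mug(z)^2/(1-|z|)\,dA(z)$ at the level of total mass. The next step is to promote this global equivalence to a Carleson-square equivalence, i.e.\ to $\sup_{a\in\D}\lambda_g(S(a))/(1-|a|)\asymp \sup_{a\in\D}\nu_g(S(a))/(1-|a|)$, which by \eqref{eq:normaBMOA} is exactly the required norm identity. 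For this I would localize the argument by fixing $a\in\D$ and working with the dyadic block of $g$ adapted to the scale $1-|a|$; the doubling hypothesis $\mu\in\DDD$ plays a double role here: it guarantees that $\mug(z)$ is comparable to $\mug(a)$ throughout $S(a)$, and via the moment asymptotics $\mu_{2n+1}\asymp\mug(1-1/n)$ it makes $D^\mu$ act almost diagonally across the dyadic decomposition, so that the localized form of the global identity goes through.

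For $(iii)\Rightarrow(i)$ and $(v)\Rightarrow(i)$, I would exploit the test family $g_a(z)=\log(1/(1-\overline{a}z))$, $a\in\D$, which is uniformly bounded in $\BMOA$ and lies in $\VMOA$; under either hypothesis one therefore has $\nm{g_a}_{\BMOA_\mu}\lesssim 1$ uniformly in $a$. Since the Taylor coefficients of $D^\mu g_a$ are the explicit sequence $\overline{a}^{n}/(n\,\mu_{2n+1})$, the $\BMOA_\mu$-bound can be translated, via Parseval on each circle $|z|=r$ and integration against $\mug(z)^2/(1-|z|)$, into a concrete inequality for $\{\mu_{2n+1}\}$. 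Reading off the behaviour at scale $1-|a|$ as $|a|\to 1^-$ forces the upper doubling $\mu\in\DD$; a symmetric argument, either using iterated logarithms $\log^{k}(1/(1-\overline{a}z))$ or testing in the opposite direction, pins down the lower doubling $\mu\in\Dd$, giving $\mu\in\DDD$.

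The main obstacle is the localization step in $(i)\Rightarrow(ii)$: while the whole-disc identity is a direct consequence of Theorem~\ref{th:main}, transferring it to each Carleson square $S(a)$ requires controlling the nonlocal nature of $D^\mu$ at the scale of $S(a)$. This is where the doubling hypothesis enters indispensably and where I expect to rely on the Whitney-type decompositions and the moment estimates from \cite{PR19} and \cite{DuanRattyaWang2025}, in parallel with the proof of Theorem~\ref{th:main} itself.
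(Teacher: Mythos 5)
Your high-level skeleton (prove (i)$\Rightarrow$(ii) as the core and recover the converses by testing) matches the paper, but the two technical steps you leave open are exactly where the difficulty lies, and as sketched both have genuine problems. First, the localization step: the claim that $\mu\in\DDD$ ``guarantees that $\mug(z)$ is comparable to $\mug(a)$ throughout $S(a)$'' is false --- $S(a)$ contains points with $1-|z|$ arbitrarily small, so $\mug(z)$ degenerates to $0$ inside $S(a)$ while $\mug(a)$ does not; such comparability only holds on hyperbolic discs. More seriously, $D^\mu$ is a Fourier multiplier and does not localize to Carleson squares or dyadic blocks; making it ``act almost diagonally'' would require off-diagonal decay estimates that you neither state nor prove, and this is the entire content of the implication. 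The paper avoids localization altogether: it replaces $\sup_a\nu(S(a))/(1-|a|)$ by the equivalent kernel quantity $\sup_a\int_\D(1-|a|)^\lambda|1-\overline{a}z|^{-\lambda-1}\,d\nu(z)$ (Lemma~\ref{lemma: carleson nucleo}), derives from the representation of $D^\mu(f)$ in terms of $f^{(n)}$ (Corollary~\ref{prop: expresión derivada fraccionaria}), H\"older's inequality and the kernel estimates of \cite{PelRatBMO} a pointwise bound of the form
$$
\abs{D^\mu(f)(z)}^2\lesssim \nm{f}_{\BMOA}^2+\left(\int_\D\frac{\abs{f^{(n)}(\z)}^2}{\abs{1-\overline{z}\z}^{2N}}\,dA_{2n+\e}(\z)\right)\frac{1}{\mug(z)^2(1-\abs{z})^{2-2N+\e}},
$$
and then closes with Fubini and Forelli--Rudin estimates (Lemma~\ref{le:OAcase1}); the hypothesis $\mu\in\DDD$ enters through Lemma~\ref{lemma: caract dcheck} and Lemma~\ref{lema pesos tec}, not through any comparability of $\mug$ on squares. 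Note also that the reverse inequality $\nm{g}_{\BMOA}\lesssim\|g\|_{\BMOA_\mu}$ and the inclusion $\VMOA_\mu\subset\VMOA$ hold for \emph{every} radial weight (Theorem~\ref{th: caracterizacion BMOA <}), a structural point that your ``prove the global equivalence and localize both directions at once'' plan obscures.

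Second, the converses. Testing with $g_a(z)=\log(1/(1-\overline{a}z))$ is plausible for extracting $\mu\in\DD$, but you give no argument for $\mu\in\Dd$ beyond ``a symmetric argument pins down the lower doubling,'' and this is not routine. The paper's choice is simpler and complete: the monomials $z^n$ are uniformly bounded in $\BMOA$ and lie in $\VMOA$ (so one test family handles both (iii)$\Rightarrow$(i) and the $\VMOA$ statement), $D^\mu(z^n)=z^n/\mu_{2n+1}$, and the hypothesis yields $\int_0^1\frac{\mug(r)^2}{1-r}r^{2n+1}\,dr\lesssim\mu_{2n+1}^2$. From this one first gets $\mu_x\lesssim\mu_{2x}$, hence $\mu\in\DD$ by Lemma~\ref{lemma: caract Dgorro}, via integration by parts and H\"older; and then $\mu\in\Dd$ by bounding the integral from below on $[1-\frac1x,\,1-\frac{1}{Kx}]$ by $\widehat{\mu}(1-\frac{1}{Kx})^2\log K$ and comparing with $\widehat{\mu}(1-\frac1x)^2$ for $K$ large. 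You would need to supply an analogue of this second, genuinely nontrivial step before your converse can be considered proved.
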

Theorem~\ref{th: caracterizacion BMOAmain} follows from Theorem~\ref{th: caracterizacion BMOA <} and  Theorem~\ref{th: caracterizacion BMOA >}. Especially, the quantitative embedding
$$ \nm{g}_{\BMOA}\lesssim \|g\|_{\BMOA_\mu}, \quad g\in\H(\D),$$
holds for any radial weight $\mu$. In the proofs of Theorem~\ref{th:main} and Theorem~\ref{th: caracterizacion BMOAmain} we will merge classical techniques employed on Hardy spaces with recent skills introduced in the context of Bergman spaces induced by radial weights. Let us recall that for $0<p<\infty$ and  a radial weight $\omega$, the Bergman  space $A^p_\om$ consists of functions $f\in \H(\D)$  such that
    $$
    \|f\|_{A^p_\omega}^p=\int_\D|f(z)|^p\omega(z)\,dA(z)<\infty.
    $$
In particular, it will be proved a  useful   identity 
which relates $D^\mu(f)$ with the $n$-th-derivative of $f$ (see  subsection~\ref{subse:22} below), which can be written either using the Taylor coefficient of $f$ or in terms of an integral on $\D$.   It will also be used
 precise Bergman norm estimates of the modified 
reproducing kernels $(1-\overline{z}\z)^NB^\om_z(\z)$ recently obtained in \cite[Lemma~5]{PelRatBMO}. Here  $B^\om_z$ is the   Bergman reproducing kernel  induced by a radial weight $\om$
at the point $z$. Theorem~\ref{th:main}, Theorem~\ref{th: caracterizacion BMOAmain} and a description of the positive Borel measures $\mu$ on $\D$
 such that the Hardy space is embedded in a tent space induced by $\mu$ (see Theorem~\ref{th:cohn} below) are the main ingredients in the proofs of  Theorem~\ref{th:Vmubounded} 
and Theorem~\ref{th:Vmucompact}. 

\medskip
Later on\textcolor{red}{,} we will prove the analogues to Theorem~\ref{th:Vmubounded} and Theorem~\ref{th:Vmucompact} on standard Bergman spaces $A^p_\alpha$, that is on $A^p_\om$ with
$\omega(z)=(\alpha+1)(1-|z|^2)^\alpha$, $\alpha>-1$. These results are (essentially) known due to the results in \cite{MorenoPelRosa, PelRosa} and we include them for the sake of completeness. 
 Therefore, let us recall that the little Bloch space
$\B_0$ consists of $g\in \B$ such that 
$\lim\limits_{\abs{z}\to 1^-}(1-\abs{z}^2)\abs{g'(z)} = 0$. Moreover, if $\mu$ is a radial weight we consider,
 the space $
    \mathcal{B}^\mu$ of $g\in\H(\D)$ such that $\nm{g}_{\mathcal{B}^\mu}=\sup_{z\in \D} \widehat{\mu}(z)\abs{D^\mu(g)(z)}<\infty$
and
$
\B_0^\mu = \set{f\in  \mathcal{B}^\mu : \lim\limits_{\abs{z}\to 1^-}\mug(z)\abs{D^{\mu}(f)(z)} = 0}.
$

\begin{theorem}\label{th:bergmanVmuboundedcompact}
	Let $\mu\in\DDD$, $0<p<\infty$, $\a>-1$ and $g\in\H(\D)$. Then,
	\begin{enumerate}
		\item[\rm(a)] The following conditions hold if and only if $V_{\mu,g}$ is bounded on $A^p_\a$;
\begin{enumerate}
\item[\rm(ai)]  $g\in\B^\mu$;
\item[\rm(aii)] $g\in\B$.
\end{enumerate}
Moreover, $$\| V_{\mu,g}\|_{A^p_\alpha\to A^p_\alpha}\asymp \|g\|_{\B^\mu}\asymp \nm{g}_{\B}.$$
	
\item[\rm(b)]  The following conditions hold if and only if $V_{\mu,g}$ is compact on $A^p_\a$;
\begin{enumerate}
\item[\rm(bi)]  $g\in\B_0^\mu$;
\item[\rm(bii)] $g\in\B_0$.
\end{enumerate}
\end{enumerate}
\end{theorem}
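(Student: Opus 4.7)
The plan is to mimic the Hardy-space argument (Theorem~\ref{th:Vmubounded}) with the tent-space ingredients replaced by direct Bergman-Carleson measure theory. The two key inputs are (i) a Littlewood-Paley characterization of $A^p_\alpha$ in terms of $D^\mu$,
$$\|h\|_{A^p_\alpha}^p \asymp |h(0)|^p + \int_\D |D^\mu(h)(z)|^p \widehat{\mu}(z)^p (1-|z|^2)^\alpha \, dA(z),\quad h\in\H(\D),$$
valid for every $\mu\in\DDD$ and $0<p<\infty$, and (ii) the norm equivalence $\|g\|_{\mathcal{B}^\mu}\asymp\|g\|_{\mathcal{B}}$ on $\H(\D)$, together with $\B_0^\mu=\B_0$. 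Both are Bergman-space analogues of Theorem~\ref{th:main} and Theorem~\ref{th: caracterizacion BMOAmain}; they can be extracted from \cite{MorenoPelRosa, PelRosa} where $D^\mu$ was introduced, and they can also be proved following the same scheme used for Theorem~\ref{th:main}, the area-function formulation collapsing to a solid integral since $A^p_\alpha$ admits no non-trivial tent-space description.

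Granted (i) and (ii), the proof of (a) is short. Directly from the series definitions one checks that $D^\mu\circ I^\mu$ is the identity on $\H(\D)$, so $D^\mu(V_{\mu,g}(f))=f\cdot D^\mu(g)$ and $V_{\mu,g}(f)(0)=f(0)\widehat{g}(0)$. Applying (i) to $h=V_{\mu,g}(f)$ yields
$$\|V_{\mu,g}(f)\|_{A^p_\alpha}^p \asymp |f(0)\widehat{g}(0)|^p + \int_\D |f(z)|^p \, d\nu_g(z),$$
where $d\nu_g(z)=|D^\mu(g)(z)|^p\widehat{\mu}(z)^p(1-|z|^2)^\alpha \, dA(z)$. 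Hence $V_{\mu,g}$ is bounded on $A^p_\alpha$ if and only if $\nu_g$ is a Carleson measure for $A^p_\alpha$, i.e., $\sup_{a\in\D}\nu_g(S(a))/(1-|a|)^{\alpha+2}<\infty$. If $g\in\mathcal{B}^\mu$, the pointwise bound $|D^\mu(g)(z)|^p\widehat{\mu}(z)^p\le \|g\|_{\mathcal{B}^\mu}^p$ immediately gives $\nu_g(S(a))\lesssim \|g\|_{\mathcal{B}^\mu}^p(1-|a|)^{\alpha+2}$. Conversely, subharmonicity of $|D^\mu(g)|^p$ on a pseudohyperbolic disk centred at $a$, combined with the estimate $\int_{S(a)}\widehat{\mu}(z)^p(1-|z|^2)^\alpha\,dA(z)\asymp\widehat{\mu}(a)^p(1-|a|)^{\alpha+2}$ (which relies on $\mu\in\DDD$), converts the Carleson condition into $\widehat{\mu}(a)|D^\mu(g)(a)|\lesssim 1$, so $g\in\mathcal{B}^\mu=\mathcal{B}$. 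The quantitative relations $\|V_{\mu,g}\|_{A^p_\alpha\to A^p_\alpha}\asymp\|g\|_{\mathcal{B}^\mu}\asymp\|g\|_{\mathcal{B}}$ follow by tracking constants through these inequalities.

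Part (b) is handled by the same reduction, replacing Carleson by vanishing Carleson measures and invoking the standard compactness criterion (a bounded family $\{f_n\}\subset A^p_\alpha$ converging to $0$ uniformly on compacta must satisfy $\|V_{\mu,g}(f_n)\|_{A^p_\alpha}\to 0$). The main obstacle is input (i): the Littlewood-Paley characterization of $A^p_\alpha$ via $D^\mu$. For $p\ge 1$ it is standard via duality against the Bergman projection $P_\mu$ and the moment asymptotics $\mu_x\asymp x^\beta(\mu_{[\beta]})_x$ available for $\mu\in\DDD$; for $p<1$ one needs a subharmonicity/Whitney-type argument in the spirit of \cite{PelRosa}, but since the integration takes place over all of $\D$ rather than over the boundary $\T$, no atomic decomposition is required and the proof is considerably shorter than its $H^p$ counterpart.
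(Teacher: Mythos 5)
Your argument is essentially the paper's: reduce boundedness/compactness of $V_{\mu,g}$ via the Littlewood--Paley formula for $D^\mu$ on Bergman spaces (\cite[Theorem 1]{PelRosa}) to the embedding $A^p_\a\hookrightarrow L^p(\nu_g)$ with $d\nu_g=|D^\mu(g)|^p\mug^p(1-|z|)^\a\,dA$, then identify the (vanishing) Carleson condition with $g\in\B^\mu$ ($\B_0^\mu$) by local subharmonicity and the constancy $\mug(z)\asymp\mug(a)$ on hyperbolic disks, finishing with $\B^\mu=\B$ from \cite{MorenoPelRosa}. The only cosmetic difference is that you phrase the Bergman--Carleson condition with Carleson squares $S(a)$ rather than the pseudohyperbolic disks $D(a,r)$ of Theorem~\ref{th: carleson standard} and Proposition~\ref{propo: bloch fraccionaria}, which are equivalent here.
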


Our last main result concerns a description of the symbols $g\in \H(\D)$ such that  $V_{\mu,g}$, acting either on $H^2$ or on $A^2_\alpha$  ($\alpha>-1$), belongs to a
 Schatten  class. Therefore, we remind the reader that
for a compact operator $T$ on separable Hilbert space $\mm{H}$, there exists a decreasing sequence of positive numbers $\{\lambda_n(T)\}_n$ and orthonormal sets $\{e_n\}_n,\{\sigma_n\}_n\subset \mm{H}$ such that
$$T(x)=\sum_{n=1}^{+\infty}\lambda_n(T)\langle x,e_n\rangle \sigma_n\qquad x\in\mm{H}\,.$$
The sequence of numbers $\{\lambda_n(T)\}_n$ is called the  singular value sequence of $T$.
\par For $0<p<\infty$, the Schatten-von Neumann ideal of $\mm{H}$, denoted by $S_p(\mm{H})$, is the space of all compact operators $T$ acting on $\mm{H}$, such that the singular value sequence of $T$ belongs to $\ell^p$. Let
\begin{equation*}\label{norm of Schatten}
\|T\|_{S_p(\H)}=\left(\sum_{n=1}^{+\infty}\lambda_n(T)^p\right)^{\frac{1}{p}}\,.
\end{equation*}
On the other hand, for $0<p<\infty$ and $n_p$ the minimum $n\in \N$ such that $np>1$,  the classical Besov space $B_p$ consists of $f\in \H(\D)$ such that 
$$\| f\|^p_{B_p}=\sum_{j=0}^{n_p-1}|f(0)|^p+\int_{\D} |f^{(n_p)}(z)|^p(1-|z|^2)^{n_pp-2}\,dA(z)<\infty.$$
For $0<p<\infty$ and $\mu$ a radial weight such that $\frac{\mug(r)^p}{(1-r^2)^2}$ is a weight we introduce the fractional derivative Besov space $B_{p,\mu}$ of $f\in \H(\D)$ such that 
$$\| f\|^p_{B_{p,\mu}}=\int_{\D} | D^{\mu}(f)(z)|^p \frac{\mug(z)^p}{(1-|z|^2)^2}  \,dA(z)<\infty.$$
Here and  on the following we denote $A^2_{-1}=H^2$.

\begin{theorem}\label{th:schattendugintro}
Let $\mu\in \DDD$, $\alpha\ge -1$, $0<p<\infty$ and $g\in \H(\D)$. Then,
\begin{enumerate}
\item[\rm(a)] If $\int_0^1 \frac{\mug(r)^p}{(1-r)^2}\,dr=+\infty$, then $V_{\mu,g}\in S_p(A^2_\alpha)$ if and only if $g=0$.
 \item[\rm(b)] If $\frac{\mug(r)^p}{(1-r)^2}$ is a radial weight, then the following conditions are equivalent:
\begin{enumerate}
\item[\rm(i)] $V_{\mu,g}\in S_p(A^2_\alpha)$;
\item[\rm(ii)]  $g\in B_{p,\mu}$;
\end{enumerate} 
and  $$\| V_{\mu,g}\|_{S_p(A^2_\alpha)}\asymp \|g\|_{B_{p,\mu}}.$$ 

Moreover, if $\frac{\mug(r)^p}{(1-r)^2}\in \Dd$, (i) and (ii) are equivalent to the condition $g\in B_p$ and 
 $$\| V_{\mu,g}\|_{S_p(A^2_\alpha)}\asymp \|g\|_{B_{p,\mu}}\asymp \|g\|_{B_p}.$$ 

\end{enumerate}
\end{theorem}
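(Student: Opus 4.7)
The plan is to reduce the Schatten-class problem for $V_{\mu,g}$ to a Schatten-class problem for a positive Toeplitz operator, and then invoke Luecking-type trace-ideal criteria. The reduction rests on the identity $D^\mu(V_{\mu,g}(f))=f\cdot D^\mu(g)$ (immediate from the definition) together with the evaluation $V_{\mu,g}(f)(0)=f(0)g(0)$. Specializing Theorem~\ref{th:main} to $p=2$ and applying Fubini (via $\int_\T\chi_{\Gamma(\xi)}(z)\,|d\xi|\asymp 1-|z|$) gives
\[
\|h\|_{H^2}^2\asymp |h(0)|^2+\int_\D |D^\mu(h)(z)|^2\,\frac{\mug(z)^2}{1-|z|}\,dA(z),\qquad h\in\H(\D),
\]
and the analogous $\mu$-fractional Littlewood--Paley identity
\[
\|h\|_{A^2_\alpha}^2\asymp |h(0)|^2+\int_\D |D^\mu(h)(z)|^2\mug(z)^2(1-|z|)^{\alpha}\,dA(z),\qquad \alpha>-1,
\]
is proved along the same lines, using the $D^\mu$--to--$n$-th-derivative identity and the Bergman kernel estimates of \cite{PelRatBMO} cited in the paper. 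Substituting $h=V_{\mu,g}(f)$ and setting $d\nu_g^\alpha(z):=|D^\mu(g)(z)|^2\mug(z)^2(1-|z|)^\alpha\,dA(z)$ (with $(1-|z|)^{-1}$ in place of $(1-|z|)^\alpha$ when $\alpha=-1$, under the convention $A^2_{-1}=H^2$), we obtain
\[
\|V_{\mu,g}(f)\|_{A^2_\alpha}^2\asymp |f(0)g(0)|^2+\int_\D |f(z)|^2\,d\nu_g^\alpha(z).
\]
Hence $V_{\mu,g}^*V_{\mu,g}$ on $A^2_\alpha$ is, modulo a rank-one perturbation from the $|f(0)g(0)|^2$ term, unitarily equivalent to the Toeplitz operator $T_{\nu_g^\alpha}$, and $V_{\mu,g}\in S_p(A^2_\alpha)\Leftrightarrow T_{\nu_g^\alpha}\in S_{p/2}(A^2_\alpha)$ with $\|V_{\mu,g}\|_{S_p}^2\asymp \|T_{\nu_g^\alpha}\|_{S_{p/2}}$.

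Next I would apply the Luecking-type Schatten criterion for positive Toeplitz operators: for $0<q<\infty$, $T_\nu\in S_q(A^2_\alpha)$ iff the averaged mass $z\mapsto \nu(S(z))/(1-|z|)^{\alpha+2}$ lies in $L^q(\D,dA/(1-|z|)^2)$ (with the local-$\ell^q$ variant on an $r$-lattice when $q<1$), and the corresponding Hardy version for $\alpha=-1$. Since $\mu\in\DDD$, the weight $\mug$ is essentially constant on each Carleson box $S(z)$; combined with subharmonicity of $|D^\mu g|^2$, this allows replacement of local averages by pointwise values, yielding $\nu_g^\alpha(S(z))/(1-|z|)^{\alpha+2}\asymp |D^\mu(g)(z)|^2\mug(z)^2$, independently of $\alpha$. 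The Bergman parameter $\alpha$ therefore drops out, and $T_{\nu_g^\alpha}\in S_{p/2}(A^2_\alpha)$ collapses to
\[
\int_\D |D^\mu(g)(z)|^p\,\frac{\mug(z)^p}{(1-|z|)^2}\,dA(z)<\infty,
\]
i.e., $g\in B_{p,\mu}$, with the matching norm equivalence $\|V_{\mu,g}\|_{S_p(A^2_\alpha)}\asymp \|g\|_{B_{p,\mu}}$. This settles the equivalence (i)$\Leftrightarrow$(ii) and the norm estimate in part (b).

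For part (a), assume $\int_0^1\mug(r)^p/(1-r)^2\,dr=\infty$ and $g\not\equiv 0$. Then $D^\mu(g)$ is a non-trivial analytic function, so $|D^\mu g|^p$ is subharmonic with $M_p^p(1/2,D^\mu g)>0$, and $r\mapsto M_p^p(r,D^\mu g)$ is non-decreasing. Polar integration gives
\[
\int_\D |D^\mu g(z)|^p\,\frac{\mug(z)^p}{(1-|z|)^2}\,dA(z)\gtrsim M_p^p(\tfrac12,D^\mu g)\int_{1/2}^1\frac{\mug(r)^p}{(1-r)^2}\,dr=\infty,
\]
which by the previous paragraph forces $V_{\mu,g}\notin S_p(A^2_\alpha)$; the reverse implication is trivial since $g=0$ gives $V_{\mu,g}=0$. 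For the final equivalence $B_p=B_{p,\mu}$ in part (b), the extra hypothesis $\mug^p/(1-r)^2\in\Dd$ places this function in the doubling class $\DDD$, and a $\mu$-fractional Littlewood--Paley description of $B_p$, proved in the spirit of Theorem~\ref{th:main} via the identity relating $D^\mu(g)$ to $g^{(n_p)}$ and standard radial-doubling weight manipulations, yields $\|g\|_{B_p}\asymp \|g\|_{B_{p,\mu}}$.

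The main obstacle is the Luecking step when $q=p/2<1$ (i.e., $p<2$), for then the Schatten criterion is not a single averaged $L^q$-integral but a local-$\ell^q$ sum over an $r$-lattice in the hyperbolic metric. Implementing this uniformly for every $\mu\in\DDD$ rather than just for standard weights requires the sharp modified-kernel estimates from \cite{PelRatBMO}, together with the $BMOA_\mu$--to--$BMOA$ comparison of Theorem~\ref{th: caracterizacion BMOAmain}, to guarantee that local measure masses and pointwise sizes of $D^\mu(g)\cdot\mug$ are comparable across the full scale of doubling weights. A secondary, routine point is the transfer of the Schatten norm through the non-unitary intertwiner $D^\mu$: the passage is valid because $D^\mu$ realizes $V_{\mu,g}^*V_{\mu,g}$ and $T_{\nu_g^\alpha}$ as comparable positive quadratic forms, and any discrepancy in the low-frequency part contributes at most a rank-one perturbation, which is $S_p$-negligible for every $0<p<\infty$.
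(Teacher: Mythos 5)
Your proposal follows essentially the same route as the paper: you compare the quadratic forms of $V_{\mu,g}^{*}V_{\mu,g}$ and of the positive Toeplitz operator induced by $|D^\mu(g)|^2\widehat{\mu}^2\,dA_\alpha$ (via Theorem~\ref{th:main} and its Bergman analogue, together with the min--max principle for comparable positive quadratic forms --- note that ``unitarily equivalent modulo a rank-one perturbation'' is not the right mechanism, but you state the correct one in your closing paragraph), then invoke Luecking's trace-ideal criterion on an $r$-lattice, use that $\widehat{\mu}$ is essentially constant on hyperbolic discs since $\mu\in\DD$, and identify $B_{p,\mu}$ with $B_p$ under the extra $\Dd$ hypothesis (the paper's Theorem~\ref{th:BpmuD}); part (a) is argued identically. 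The one step that is wrong as literally written is the pointwise two-sided estimate $\nu_g^\alpha(S(z))/(1-|z|)^{\alpha+2}\asymp|D^\mu(g)(z)|^2\widehat{\mu}(z)^2$: subharmonicity gives only the lower bound for the average, and the reverse pointwise inequality fails for general analytic functions; what is actually needed (and what the paper does, mimicking \cite[Theorem~2]{AS}) is the comparison of the $\ell^{p/2}$-sum of local averages over the lattice with $\|g\|_{B_{p,\mu}}^p$, using Luecking's local sub-mean-value lemma and the finite overlap of the discs when $p<2$ --- a point you in effect concede when you flag that case as the main obstacle, so the outline is sound once that pointwise claim is replaced by the lattice-sum argument.
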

It is worth mentioning that for any $1<p<\infty$,  the classical Volterra-type operator $V_g\in S_p(A^2_\a)$, $\a\geq-1$, if and only if $g\in \B_p$. In the case $0<p\le 1$,
 $V_g\in S_p(A^2_\a)$ if and only if $g$ is constant \cite{AS,ASBergman}. Going further,  we  consider the operator 
$$I^n(f)(z)=\int_0^z I_{n-1}(f)(z)\, dz,\quad I_0(f)(z)=z, \quad n\in \N,$$
and the operator  $V_{n,g}(f)(z) = I^n(f \cdot g^{(n)})(z)$. It was shown in \cite[Corollary~1, p.27]{DuLiQu2022} that for
any $\frac{1}{n}<p<\infty$,   $V_{n,g}\in S_p(A^2_\a)$, $\a>-1$, if and only if $g\in \B_p$. In the case $0<p\le \frac{1}{n}$,
 $V_g\in S_p(A^2_\a)$ if and only if $g^{(n-1)}$ is constant. Consequently, in view of all the previous results,  it is not surprising that Theorem~\ref{th:schattendugintro} states 
that  the cut-off for which there exist non-trivial operators $V_{\mu,g} \in S_p(A^2_\alpha)$ depends  on the condition  $\int_0^1 \frac{\mug(r)^p}{(1-r)^2}\,dr<+\infty$. 
 Indeed in  our framework, the analogue of the $n$-th derivative corresponds to the fractional derivative induced by the standard weight $\mu(r) = n(1 - r^2)^{n-1}$. In this case, $\frac{\widehat{\mu}(r)^p}{(1 - r)^2}$ is not a weight if and only if $p \leq \frac{1}{n}$. Therefore,  we rediscover in Theorem~\ref{th:schattendugintro}
 (as  particular cases and in certain sense) some of the results in \cite{AS,ASBergman,DuLiQu2022}.    
We also observe
that fixed $\mu \in \DDD$ and $p$ sufficiently large, Lemma~\ref{lemma: caract dcheck}~(ii) (below) ensures that there always exist non-trivial symbols $g$ such that $V_{\mu,g} \in S_p(A^2_\alpha)$.

Theorem~\ref{th:schattendugintro} follows from Theorem~\ref{th:schattendug} and Theorem~\ref{th:BpmuD} below. In Theorem~\ref{th:schattendug} it is proved that 
(i) and (ii) of  Theorem~\ref{th:schattendugintro} are equivalent. In fact, it is used  
the min-max theorem of eigenvalues for positive operators to connect the  singular value sequence of $ V_{\mu,g}$  with the  singular value sequence of a Toeplitz operator, and later on
a classical result due to H. Luecking \cite{Lu87} is employed to prove that (i) is equivalent to a discrete quantity equivalent to the $\| g\|^p_{ B_{p,\mu}}$. On the other hand, 
 Theorem~\ref{th:BpmuD} says that for any radial weight and $1<p<\infty$,
$B_p=B_{p,\mu}$ if and only $\frac{\mug(r)^p}{(1-r)^2}\in \DDD$, and the result remains true if  $0<p\le 1$ and $\mu\in \DD$. The proof Theorem~\ref{th:BpmuD} follows ideas from the proofs of  Theorem~\ref{th:main} and Theorem~\ref{th: caracterizacion BMOAmain} but a good number of extra obstacles have to be solved. In particular, different approaches are needed to deal with the cases $1<p<\infty$ and $0<p\le 1$.

The rest of the paper is organized as follows. Section~\ref{s2} is devoted to prove some technical results on radial weights  and useful representation formulas for the fractional derivative. Theorem~\ref{th:main} is proved in Section~\ref{s3} and Theorem~\ref{th: caracterizacion BMOAmain} is proved in Section~\ref{s4}. 
Section~\ref{s5} contains  proofs of Theorem~\ref{th:Vmubounded} and  Theorem~\ref{th:Vmucompact}. A proof of Theorem~\ref{th:bergmanVmuboundedcompact} is given in 
Section~\ref{s6}. Section~\ref{s7} contains results about embeddings between the spaces   $B_p$ and $B_{p,\mu}$. Finally,   Theorem~\ref{th:schattendugintro} is proved in Section~\ref{s8}.

To this end a couple of words about the notation used. The letter $C=C(\cdot)$ will denote an absolute constant whose value depends on the parameters indicated in the parenthesis, and may change from one occurrence to another.
We will use the notation $a\lesssim b$ if there exists a constant
$C=C(\cdot)>0$ such that $a\le Cb$, and $a\gtrsim b$ is understood
in an analogous manner. In particular, if $a\lesssim b$ and
$a\gtrsim b$, then we write $a\asymp b$ and say that $a$ and $b$ are comparable.

	\section{Previous results}\label{s2}
\subsection{Background on radial weights}
	Throughout the manuscript we will employ different descriptions of the classes of radial weights
	$\DD$ and $\Dd$. The next result gathers several characterizations of $\DD$ proved in \cite[Lemma 2.1]{PelProcedings}. 
	\begin{letterlemma}\label{lemma: caract Dgorro}
		Let $\om$ be a radial weight. Then, the following statements are equivalent:
		\begin{itemize}
			\item[(i)] $\om\in\DD$;
			\item[(ii)] There exists a constant $C=C(\om)>0$ such that
			$$
			\om_x \leq C\omg\left (1-\frac{1}{x} \right),\quad x\geq 1;
			$$
			\item[(iii)] There exists a constant $C=C(\om)>0$ such that
			$$
			\om_x\leq C\om_{2x},\quad x\geq1.
			$$
		\end{itemize}
	\end{letterlemma}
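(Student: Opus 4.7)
The plan is to run a cycle around the three conditions, with each step hinging on the tight link between $\om_x$ and $\widehat{\om}(1-1/x)$. I would prove $(\mathrm{i})\Rightarrow(\mathrm{ii})\Rightarrow(\mathrm{i})$ first, deduce $(\mathrm{iii})$ from them as a bonus, and finally close the loop with the delicate $(\mathrm{iii})\Rightarrow(\mathrm{i})$.

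For $(\mathrm{i})\Rightarrow(\mathrm{ii})$, I would integrate by parts using $\om(s)\,ds=-d\widehat{\om}(s)$ and $\widehat{\om}(1)=0$ to write
$$
\om_x = x\int_0^1 s^{x-1}\widehat{\om}(s)\,ds,
$$
and then split the integration at $s=1-1/x$. The outer piece $[1-1/x,1]$ is immediately bounded by $\widehat{\om}(1-1/x)$ using monotonicity. For the inner piece I subdivide dyadically along $t_k=1-2^k/x$: on $[t_{k+1},t_k]$ the factor $s^{x-1}$ is at most $e^{-2^k}$, while iterating (i) yields $\widehat{\om}(t_{k+1})\leq C^{k+1}\widehat{\om}(1-1/x)$. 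The sum $\sum_k C^{k+1} e^{-2^k}$ converges by the super-exponential decay of $e^{-2^k}$, giving the desired bound.

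For $(\mathrm{ii})\Rightarrow(\mathrm{i})$, the starting point is
$$
\widehat{\om}(1-1/x) = \widehat{\om}(1-1/(2x)) + \int_{1-1/x}^{1-1/(2x)}\om(s)\,ds.
$$
On the interval $[1-1/x,1-1/(2x)]$ the factor $s^{2x}$ is comparable to an absolute constant, so the integral is $\lesssim \int_{1-1/x}^{1-1/(2x)} s^{2x}\om(s)\,ds\leq\om_{2x}$. Applying (ii) at $2x$ gives $\om_{2x}\lesssim \widehat{\om}(1-1/(2x))$, hence (i). With $(\mathrm{i})\Leftrightarrow(\mathrm{ii})$ in hand, $(\mathrm{iii})$ drops out: the elementary lower bound $\om_y\gtrsim(1-1/y)^y\widehat{\om}(1-1/y)\asymp\widehat{\om}(1-1/y)$ combined with (ii) produces $\om_y\asymp\widehat{\om}(1-1/y)$, so the ratio $\om_x/\om_{2x}$ inherits boundedness directly from (i).

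The remaining direction $(\mathrm{iii})\Rightarrow(\mathrm{i})$ is the most delicate step. My strategy is to iterate $(\mathrm{iii})$ and decompose the telescoping sum $\om_x=\sum_{k\geq 0}(\om_{2^k x}-\om_{2^{k+1}x})$, writing each difference as
$$
\om_{2^k x}-\om_{2^{k+1}x}=\int_0^1 s^{2^k x}(1-s^{2^k x})\om(s)\,ds
$$
and subdividing in $s$ along a second dyadic scale $t_j=1-2^j/(2^k x)$. The integrand concentrates near $s=1-(\log 2)/(2^k x)$ and carries a super-exponentially small factor $e^{-c\cdot 2^j}$ on each annulus. Reindexing the resulting double sum through $m=k-j-1$ collapses it into $\sum_m \widehat{\om}(1-2^{-m}/x)$ with uniformly bounded multiplicities, and bootstrapping $(\mathrm{iii})$ at multiple scales against the trivial $\om_x\gtrsim\widehat{\om}(1-1/x)$ forces $\widehat{\om}(1-1/x)\lesssim\widehat{\om}(1-1/(2x))$, which is (i) after setting $r=1-1/x$. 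The principal obstacle is taming the terms with $m\leq 0$, where the factor $s^y$ is not itself small and the doubling must be extracted from the moment hypothesis alone; this is the step that requires the most careful bookkeeping.
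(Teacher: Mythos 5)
The paper does not prove this lemma itself (it cites \cite[Lemma 2.1]{PelProcedings}), so your argument stands on its own. The parts (i)$\Rightarrow$(ii), (ii)$\Rightarrow$(i), and the deduction of (iii) from (i) and (ii) are correct: the integration by parts $\om_x=x\int_0^1 s^{x-1}\omg(s)\,ds$, the dyadic splitting with factors $C^{k+1}e^{-c2^k}$, and the two-sided comparison $\om_y\asymp\omg(1-1/y)$ are all sound (modulo the harmless range $1\le x\le 2$, which you should dispatch separately using $\omg(0)<\infty$ and $\omg(3/4)>0$).

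The gap is in (iii)$\Rightarrow$(i), precisely at the step you flag. After reindexing your double sum, the terms with $m\le 0$ involve $\omg(1-2^{|m|}/x)$, and the only way to control these from the hypothesis is $\omg(1-2^{|m|}/x)\le 4\,\om_{2^{-|m|}x}\le 4C^{|m|}\om_x$, obtained by iterating (iii) $|m|$ times; the compensating decay from $s^{y}$ on the relevant annuli is of order $e^{-c2^{|m|}}$. The total error you must absorb into $\om_x$ is therefore of order $\bigl(\sum_{n\ge1}C^{n}e^{-c2^{n}}\bigr)\om_x$. This series converges, but nothing forces its sum to be less than $1$: already the $n=1$ term is of order $Ce^{-c}$, which exceeds $1$ for large doubling constants $C$. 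So the absorption does not close as described. (A secondary, repairable issue: for each fixed $m$ infinitely many pairs $(k,j)$ contribute, so the multiplicities are not uniformly bounded, only geometrically weighted in $k$.)

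The repair is to get the quantifiers in the right order: do \emph{not} telescope over all scales, but iterate (iii) a number of times fixed in advance. From $\om_x\le C^{n}\om_{2^{n}x}$ and the single split
$$
\om_{2^{n}x}=\int_0^{1-\frac1x}s^{(2^{n}-1)x}\,s^{x}\om(s)\,ds+\int_{1-\frac1x}^{1}s^{2^{n}x}\om(s)\,ds\le e^{-(2^{n}-1)}\om_x+\omg\!\left(1-\frac1x\right),
$$
one gets $\om_x\le C^{n}e^{-(2^{n}-1)}\om_x+C^{n}\omg(1-1/x)$. Since $e^{-2^{n}}$ beats $C^{n}$, fix $n=n(C)$ with $C^{n}e^{-(2^{n}-1)}\le\frac12$ and absorb (legitimate because $\om_x\le\omg(0)<\infty$) to obtain $\om_x\le 2C^{n}\omg(1-1/x)$, which is (ii); your already-proved implication (ii)$\Rightarrow$(i) then finishes the cycle. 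With this replacement the proof is complete.
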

	We will also use a couple of descriptions of the class $\Dd$.
	\begin{letterlemma}\label{lemma: caract dcheck}
		Let $\om$ be a radial weight. Then, the following statements are equivalent: 
		\begin{itemize}
			\item[(i)] $\om\in\Dd$;
			\item[(ii)] There exists $C=C(\om)>0$ and $\b=\b(\om)>0$ such that
			$$
			\omg(s)\leq C \left ( \frac{1-s}{1-t} \right )^\b \omg(t),\quad 0\leq t \leq s <1;
			$$
\item[(iii)] For each (equivalently for some) $\gamma>0$ there is $\eta_0=\eta_0(\gamma)< 1$ such that for any  $\eta_0<\eta\leq 1$ there is $C=C(\om,\gamma, \eta)>0$ such that
			$$
			\int_0^r \frac{ds}{\omg(s)^\gamma(1-s)^\eta} \leq \frac{C}{\omg(r)^\gamma(1-r)^{\eta-1}},\quad 0\leq r < 1.
			$$  
		\end{itemize}
	\end{letterlemma}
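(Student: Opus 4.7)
The plan is to close the cycle (i)$\Rightarrow$(ii)$\Rightarrow$(iii)$\Rightarrow$(i). The first implication is a routine iteration of the one-step doubling, the second a direct substitution in the integral, and the third is the delicate part in which a one-step geometric decay must be extracted from an integral inequality.

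For (i)$\Rightarrow$(ii), I iterate the defining inequality: setting $r_0=t$ and $r_k=1-(1-t)/K^k$ gives $\omg(t)\ge C^n\omg(r_n)$ for every $n\in\N$. Given $t\le s<1$, pick the unique $n$ with $K^n\le (1-t)/(1-s)<K^{n+1}$; then $r_n\le s$, so by monotonicity $\omg(s)\le\omg(r_n)\le C^{-n}\omg(t)$, and rewriting $C^{-n}$ as a power of $(1-s)/(1-t)$ yields (ii) with exponent $\b=\log C/\log K$. Conversely, (ii)$\Rightarrow$(i) is immediate by specialising (ii) at $t=r$, $s=1-(1-r)/K$: one obtains $\omg(1-(1-r)/K)\le CK^{-\b}\omg(r)$, and any $K$ so large that $CK^{-\b}<1$ works.

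For (ii)$\Rightarrow$(iii), fix $\gamma>0$ and invert (ii) into the pointwise bound
$$
\frac{1}{\omg(s)^\gamma(1-s)^\eta}\le\frac{C^\gamma(1-r)^{\b\gamma}}{\omg(r)^\gamma(1-s)^{\eta+\b\gamma}},\qquad 0\le s\le r<1.
$$
Integrating from $0$ to $r$ and estimating $\int_0^r(1-s)^{-\eta-\b\gamma}\,ds\lesssim (1-r)^{1-\eta-\b\gamma}$, which is valid precisely when $\eta+\b\gamma>1$, the two powers of $(1-r)$ combine to give exactly the bound in (iii); the admissible range is $\eta_0(\gamma)=\max\{0,1-\b\gamma\}<1$. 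The same calculation at $\eta=1$ yields the constant $\log K$ in place of $(1-K^{\eta-1})/(1-\eta)$ and gives the endpoint of the range.

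The main obstacle is (iii)$\Rightarrow$(i), because one must manufacture a uniform geometric decay out of an integral estimate. The trick is to test (iii) at $r':=1-(1-r)/K$ for a free parameter $K>1$: since $(1-r')^{1-\eta}=K^{\eta-1}(1-r)^{1-\eta}$, the right-hand side of (iii) evaluated at $r'$ becomes $CK^{\eta-1}(1-r)^{1-\eta}/\omg(r')^\gamma$. On the left, I discard everything outside $[r,r']$ and use the monotonicity $\omg(s)\le\omg(r)$ to bound the integral from below by $\omg(r)^{-\gamma}\int_r^{r'}(1-s)^{-\eta}\,ds = (1-K^{\eta-1})(1-r)^{1-\eta}/((1-\eta)\omg(r)^\gamma)$ when $\eta<1$ (the endpoint $\eta=1$ is handled analogously, with $\log K$ replacing $(1-K^{\eta-1})/(1-\eta)$). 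Comparing the two sides cancels the common factor $(1-r)^{1-\eta}$ and produces
$$
\omg(r')^\gamma\le \frac{C(1-\eta)K^{\eta-1}}{1-K^{\eta-1}}\,\omg(r)^\gamma.
$$
Since $K^{\eta-1}\to 0$ as $K\to\infty$, the prefactor tends to zero, so any sufficiently large $K$ makes it strictly less than $1$; extracting $\gamma$-th roots gives (i). The only subtle point is that this comparison is non-quantitative in $K$, but the existence of \emph{some} such $K$ is precisely what (i) asserts.
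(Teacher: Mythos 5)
Your proposal is correct and takes essentially the same route as the paper: (ii)$\Rightarrow$(iii) is the same pointwise inversion of (ii) followed by integration of $(1-s)^{-\eta-\beta\gamma}$, and (iii)$\Rightarrow$(i) rests on the same idea of bounding the integral from below on $[r,\,1-(1-r)/K]$ and letting $K\to\infty$ (the paper tests at $\eta=1$ and obtains a $\log K$ lower bound, while you test at some $\eta<1$ and obtain a power of $K$ --- an immaterial difference). The only presentational divergence is that you prove (i)$\Leftrightarrow$(ii) directly by iterating the doubling inequality, whereas the paper cites this equivalence from \cite{PelRosa}; your iteration argument is also correct.
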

	\begin{proof}
		The equivalence between (i) and (ii) is proved in \cite[Lemma B]{PelRosa}.
  Let us prove that (ii)$\Rightarrow$(iii). Take $\g>0$ and choose $\eta_0=1-\b\gamma$. Then,  if $\eta_0<\eta\leq 1$
		$$
		\int_0^r\frac{ds}{\omg(s)^\gamma(1-s)^\eta} \leq C^\gamma \frac{(1-r)^{\gamma\b}}{\omg(r)^\gamma}\int_0^r\frac{ds}{(1-s)^{\eta+\gamma\b}} \leq \frac{C^\gamma}{\eta+\gamma\b-1}\frac{1}{\omg(r)^\gamma(1-r)^{\eta-1}},\quad 0\leq r < 1,
		$$
so (iii) holds with $C(\om,\gamma, \eta)=\frac{C^\gamma}{\eta+\gamma\b-1}$.

		Conversely assume that (iii) holds for some $\gamma>0$. Then,  
		$$
		\frac{C}{\omg(t)^\gamma} \geq \int_0^t \frac{ds}{\omg(s)^\gamma (1-s)} \geq \int_r^t \frac{ds}{\omg(s)^\gamma(1-s)} \geq \frac{1}{\omg(r)^\gamma}\log\frac{1-r}{1-t},\, 0\leq r<t <1,
		$$
where  $C=C(\om,\gamma, 1)=C(\om,\gamma)$.
		Take  $K>1$ and $t=1-\frac{1-r}{K}$, then it follows  that 
		$$
		\frac{C}{\omg\left( 1-\frac{1-r}{K}\right)^\gamma}\geq \frac{1}{\omg(r)^\gamma} \log K,$$
that is $$\omg(r)\ge \left(\frac{\log K}{C} \right)^{1/\gamma}\omg\left( 1-\frac{1-r}{K}\right), \quad 0\le r<1.$$
So, choosing $K>e^C$ we deduce that $\om\in\Dd$. This finishes the proof.
		
	\end{proof}
	\subsection{Reproducing formulas and useful identities}\label{subse:22}
The   Bergman reproducing kernel  induced by a radial weight $\om$, can be written as 
 $$B^\om_z(\z)=\sum \overline{e_n(z)}e_n(\z),\quad z,\z\in\D,$$   for any orthonormal basis $\{e_n\}$ of $A^2_\om$. Therefore 
 using the basis
 induced by the normalized monomials
\begin{equation}\label{eq:B}
B^\om_z(\z)=\sum_{n=0}^\infty\frac{\left(\overline{z}\z\right)^n}{2\om_{2n+1}}, \quad z,\z\in \D.
\end{equation}

Now let us consider an integral-derivative fractional operator which includes the fractional derivative $D^\mu$ as a particular case. Indeed, 
	for a pair of radial weights $\om$ and $\nu$ the  integral-derivative fractional operator
	$$
	R^{\om,\nu}(f)(z)=\sum_{n=0}^{\infty}\frac{\om_{2n+1}}{\nu_{2n+1}}\widehat{f}(n)z^n,\quad f\in\H(\D),
	$$
	was first introduced in \cite{PeralaFraccionaria}, and  a calculation shows (see \cite{PeralaFraccionaria,PeralaRattyaFraccionaria}) that
\begin{equation}\label{eq:mu+}
   D^\mu(f)(z) = R^{1,\mu_+}(f)(z),\quad f\in\H(\D),\quad z\in\D,
\end{equation}
where  $\mu_+(z)=\int_{\abs{z}}^1\frac{\mu(s)}{s}ds,\, z\in\D\setminus\{0\}$. Now, we recall the reader
the following integral representation of $R^{\om,\nu}$  \cite[(3.3)]{PeralaFraccionaria}.
	\begin{lettertheorem}\label{th:perala}
		Let $\om$ and $\nu$ be radial weights.  Then,
		$$
		R^{\om,\nu}(f)(z) = \int_{\D} f(\z)\overline{B^{\nu}_{z}(\z)}\om(\z)dA(\z),\quad f\in A^1_\om,\quad z\in\D.
		$$
	\end{lettertheorem}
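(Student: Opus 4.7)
The plan is to substitute the series expansion \eqref{eq:B} of the reproducing kernel into the integral, interchange the sum and the integral, and use the fact that the monomials $\{\overline{\z}^n\}$ pick off the Taylor coefficients of $f$ against the radial measure $\om\,dA$. Formally one has
$$\int_\D f(\z)\overline{B^\nu_z(\z)}\om(\z)\,dA(\z) = \sum_{n=0}^{\infty}\frac{z^n}{2\nu_{2n+1}}\int_\D f(\z)\overline{\z}^n\om(\z)\,dA(\z),$$
so the identity will follow once the interchange is justified and the inner integrals are computed.

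The inner integrals are evaluated by passing to polar coordinates $\z=re^{i\t}$: the angular integration reduces $\int_0^{2\pi}f(re^{i\t})e^{-in\t}\,d\t$ to $2\pi\widehat{f}(n)r^n$ by the definition of the $n$-th Taylor coefficient of $f$, and the remaining radial integration against $r^{2n+1}\om(r)\,dr$ yields $2\om_{2n+1}\widehat{f}(n)$. Substituting back produces $\sum_{n=0}^{\infty}\frac{\om_{2n+1}}{\nu_{2n+1}}\widehat{f}(n)z^n = R^{\om,\nu}(f)(z)$, which is the desired identity.

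The main obstacle is justifying the swap of sum and integral for $f\in A^1_\om$. This amounts to showing that, for each fixed $z\in\D$, the kernel sum $\sum_{n=0}^{\infty}\frac{|z|^n|\z|^n}{2\nu_{2n+1}}$ is uniformly bounded in $\z\in\D$. Using the elementary lower bound $\nu_{2n+1}\ge r^{2n+1}\nug(r)$, which is valid for any $r\in(0,1)$ since $\nug(r)>0$, and choosing $r\in(0,1)$ with $|z|<r^2$, the kernel sum is dominated by a convergent geometric series whose bound depends on $z$ and $r$ but is independent of $\z\in\D$. Fubini is then applicable, with the double integral controlled by a constant times $\|f\|_{A^1_\om}$, and the proof concludes. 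No density or approximation argument is needed, so the identity holds for every $f\in A^1_\om$ as stated.
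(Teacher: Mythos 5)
Your argument is correct: the termwise bound $\nu_{2n+1}\ge r^{2n+1}\nug(r)$ with $|z|<r^2$ legitimately dominates the kernel series uniformly in $\z$, Fubini then applies because $f\in A^1_\om$, and the polar-coordinate evaluation of $\int_\D f(\z)\overline{\z}^n\om(\z)\,dA(\z)=2\om_{2n+1}\widehat f(n)$ is right with the normalization $dA=\frac{dx\,dy}{\pi}$. The paper itself does not reprove this statement (it is quoted from \cite[(3.3)]{PeralaFraccionaria}), but your proof is the standard argument behind that identity and is complete as written.
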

	From the above integral representation we can obtain an alternative formula for $R^{\om,\nu}$ in terms of the $n$th-derivative of the function.  In order to obtain that expression we introduce the following concepts.
	For a radial weight $\nu$ we consider the iterated radial weights 
	$$
	V_{\nu,n}(z)= 2\int_{\abs{z}}^1 rV_{\nu,n-1}(r)dr,\quad z\in \D,\, n\in\N\quad \text{and} \quad V_{\nu,0}(z)=\nu(z),
	$$
	and for a radial weight $\om$ we consider
	$\om^\ast(z)=\int_{\abs{z}}^1s\om(s)\log\frac{s}{\abs{z}}ds$ for $z\in\D\setminus\{0\}$ and its iterations
	$$
	\om^{\ast,n}(z)=\int_{\abs{z}}^1s\om^{\ast,n-1}(s)\log\frac{s}{\abs{z}}ds,\quad z\in \D\setminus\{0\},n\in \N \quad \text{and} \quad \om^{\ast,0}(z)=\om(z).
	$$ 
	\begin{proposition}\label{prop: expresión derivada fraccionaria general}
		Let  be $\om$ and $\nu$ be radial weights. Then, 
		\begin{itemize}
		\item[(i)] If $x>0$ and $n\in\N$ then $(V_{\nu,n})_x = \frac{2(V_{\nu,n-1})_{x+2}}{x+1}$ and $(B^{\nu}_z)^{(n)}(\z) = \overline{z}^n B_z^{V_{\nu,n}}(\z)$ for every $z,\z\in\D$;
		\item[(ii)] If $x>0$ and $n\in \N$ then $(\om^{\ast,n})_x = \frac{(\om_{\ast,n-1})_{x+2}}{(x+1)^2}$;
		\item[(iii)] Let $f\in A^1_\om$  and $n\in\N$,  then
		\begin{equation*}
			\begin{split}
				R^{\om,\nu}(f)(z) &= 
 \sum_{j=0}^{n-1}\frac{\om_{2j+1}}{\nu_{2j+1}}\widehat{f}(j)z^j + 4^nz^n\int_\D f^{(n)}(\z)\overline{B^{V_{\nu,n}}_z(\z)}\om^{\ast,n}(\z)dA(\z) \\
				&= \sum_{j=0}^{n-1}\frac{\om_{2j+1}}{\nu_{2j+1}}\widehat{f}(j)z^j + 4^nz^n R^{\om^{\ast,n},V_{\nu,n}}(f^{(n)})(z), \quad z\in\D.
			\end{split}
		\end{equation*}
		\end{itemize}
	\end{proposition}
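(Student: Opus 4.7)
\smallskip

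My plan is to treat the three parts as essentially bookkeeping built on two tools: Fubini's theorem (to convert the definitions of the iterated weights $V_{\nu,n}$ and $\omega^{\ast,n}$ into moment recursions) and the Taylor expansion \eqref{eq:B} of the Bergman kernel (to pass from integral formulas to coefficient identities). Parts (i) and (ii) are direct; part (iii) then follows by iterating (i) and (ii) and matching Taylor coefficients with the expansion of $R^{\om,\nu}(f)$ coming from Theorem~\ref{th:perala}.

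For (i), to obtain the moment recursion I would write
$$
(V_{\nu,n})_x = \int_0^1 s^x\, 2\!\int_s^1 r V_{\nu,n-1}(r)\,dr\,ds
$$
and interchange the order of integration, which reduces the inner integral in $s$ to $r^{x+1}/(x+1)$ and gives $(V_{\nu,n})_x=\frac{2}{x+1}(V_{\nu,n-1})_{x+2}$. Iterating yields the closed form $(V_{\nu,n})_{2k+1}= \nu_{2k+2n+1}\,k!/(k+n)!$. For the kernel identity, I would differentiate \eqref{eq:B} termwise $n$ times in $\zeta$, producing
$$
(B^{\nu}_z)^{(n)}(\zeta)=\overline{z}^{\,n}\sum_{k=0}^\infty \frac{(k+n)!/k!}{2\nu_{2k+2n+1}}(\overline{z}\zeta)^k,
$$
and then recognize the sum as $B^{V_{\nu,n}}_z(\zeta)$ by comparing with \eqref{eq:B} applied to $V_{\nu,n}$ and invoking the closed form just derived.

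For (ii) the argument is the same Fubini computation; after swapping integrals one reduces the double integral in the moment $(\omega^{\ast,n})_x$ to the elementary identity
$$
\int_0^s t^x\log\frac{s}{t}\,dt \,=\,\frac{s^{x+1}}{(x+1)^2},
$$
proved by the substitution $u=t/s$ together with $\int_0^1 u^x(-\log u)\,du = (x+1)^{-2}$. This immediately produces $(\omega^{\ast,n})_x=(\omega^{\ast,n-1})_{x+2}/(x+1)^2$ and, by iteration, $(\omega^{\ast,n})_{2k+1}=\omega_{2k+2n+1}\,(k!)^2/\bigl(4^n ((k+n)!)^2\bigr)$.

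For (iii), Theorem~\ref{th:perala} gives $R^{\om,\nu}(f)(z)=\sum_{j\ge 0}\frac{\omega_{2j+1}}{\nu_{2j+1}}\widehat f(j)z^j$ for $f\in A^1_\om$. I would split off the first $n$ Taylor terms and, in the remaining tail, change the index $j=k+n$ and use $\widehat f(k+n)=\frac{k!}{(k+n)!}\widehat{f^{(n)}}(k)$, rewriting the tail as
$$
z^n\sum_{k=0}^\infty \frac{\omega_{2k+2n+1}}{\nu_{2k+2n+1}}\cdot\frac{k!}{(k+n)!}\,\widehat{f^{(n)}}(k)\,z^k.
$$
Applying Theorem~\ref{th:perala} to $f^{(n)}$ with the weight pair $(\omega^{\ast,n},V_{\nu,n})$ expresses the integral $\int_\D f^{(n)}(\zeta)\overline{B^{V_{\nu,n}}_z(\zeta)}\omega^{\ast,n}(\zeta)\,dA(\zeta)$ as $\sum_k \frac{(\omega^{\ast,n})_{2k+1}}{(V_{\nu,n})_{2k+1}}\widehat{f^{(n)}}(k)z^k$. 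Combining the closed forms for $(\omega^{\ast,n})_{2k+1}$ and $(V_{\nu,n})_{2k+1}$ derived in (i)–(ii), the ratio simplifies to exactly $\frac{\omega_{2k+2n+1}}{4^n\nu_{2k+2n+1}}\cdot\frac{k!}{(k+n)!}$, which matches the tail above after multiplication by $4^n z^n$.

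The main technical nuisance is justifying the applicability of Theorem~\ref{th:perala} to $f^{(n)}$ with the pair $(\omega^{\ast,n},V_{\nu,n})$, i.e.\ ensuring $f^{(n)}\in A^1_{\omega^{\ast,n}}$ so that the integral representation is legitimate; I would handle this either by a density argument (polynomials are dense and both sides are continuous in a suitable topology) or by observing that the two series expansions agree coefficientwise on $\D$ and that both are analytic there, so the identity extends from polynomials to the full space $A^1_\om$ by standard truncation. Beyond this, everything reduces to careful combinatorial bookkeeping of the factors $4^n$ and $(k+n)!/k!$.
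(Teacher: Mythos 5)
Your proposal is correct. Parts (i) and (ii) coincide with the paper's proof: the moment recursions come from the same Fubini computation, and your termwise $n$-fold differentiation of \eqref{eq:B} followed by coefficient recognition is just the non-inductive version of the paper's one-derivative-at-a-time induction; your closed forms $(V_{\nu,n})_{2k+1}=\nu_{2k+2n+1}\,k!/(k+n)!$ and $(\om^{\ast,n})_{2k+1}=\om_{2k+2n+1}(k!)^2/(4^n((k+n)!)^2)$ agree with the paper's \eqref{eq:j3}. For (iii) your route is genuinely different in presentation: you split the defining power series of $R^{\om,\nu}(f)$, reindex with $\widehat f(k+n)=\frac{k!}{(k+n)!}\widehat{f^{(n)}}(k)$, and match the tail against $4^nz^nR^{\om^{\ast,n},V_{\nu,n}}(f^{(n)})$ coefficientwise, whereas the paper writes $R^{\om,\nu}(f)(z)=\langle f,B^\nu_z\rangle_{A^2_\om}$ and applies the iterated integration-by-parts identity \eqref{eq:j1} with $g=B^\nu_z$. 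The paper's route delivers the integral form of the remainder directly (the integrability of $f^{(n)}\overline{(B^\nu_z)^{(n)}}\om^{\ast,n}$ being absorbed into the hypothesis $f\overline{B^\nu_z}\in L^1_\om$ of \eqref{eq:j1}), while yours delivers the series form directly and must then invoke Theorem~\ref{th:perala} once more for the pair $(\om^{\ast,n},V_{\nu,n})$ to recover the integral, which is exactly the point you flag. That issue is real but not a defect specific to your argument: the paper's own proof implicitly relies on the same convergence when it asserts \eqref{eq:j1}, and your fallback (coefficientwise agreement of two functions analytic in $\D$, extended from polynomials by truncation $f\mapsto f_r$) is a legitimate way to close it, since both series converge in $\D$ thanks to $\om_{2j+1}|\widehat f(j)|\lesssim\|f\|_{A^1_\om}$ and $\nu_{2j+1}\gtrsim_\rho\rho^{2j}$.
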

	\begin{proof}
		The first equality  of (i) and (ii) can be obtained using  Fubini's theorem. To prove the second identity in (i), from the equality $(V_{\nu,n+1})_{2k+1}=\frac{(V_{\nu,n})_{2k+3}}{k+1}$ for every  $n,k\in \N\cup\{0\}$ we get
		$$
		(B^{V_{\nu,n}}_z)'(\z)=\overline{z} \sum_{k=0}^{\infty}\frac{(k+1)(\overline{z}\z)^k}{2(V_{\nu,n})_{2k+3}} = \overline{z}\sum_{k=0}^\infty \frac{(\overline{z}\z)^k}{2(V_{\nu,n+1})_{2k+1}}=\overline{z}B^{V_{\nu,n+1}}_z(\z),\quad z,\z\in\D,\quad n\in \N\cup\{0\}.
		$$
		From this identity we can prove the statement by induction. The case $n=1$ follows straightforward from the above identity with $n=0$. Next assume that $(B^{\nu}_z)^{(n)}(\z) = \overline{z}^n B^{V_{\nu,n}}_z(\z)$, for $n\in \N$. Then, 
		$$
		(B^\nu_z)^{(n+1)}(\z) = ((B^\nu_z)^{(n)})'(\z) = \overline{z}^n (B^{V_{\nu,n}}_z)'(\z) = \overline{z}^{n+1}B^{V_{\nu,n+1}}(\z),\quad z,\z\in\D.
		$$
Let us prove (iii).  Arguing as in the proof of \cite[(4.3)]{PR} for any $f,g\in \H(\D)$ such that $f\overline{g}\in L^1_\om$, we get
\begin{equation}\begin{split}\label{eq:j1}
\langle f,g \rangle_{A^2_\om} & =\int_{\D} f(\z)\overline{g(\z)}\, \om(\z)\,dA(\z)=\int_{\D} f(\z)\overline{g(\z)}\, \om^{\ast,0}(\z)\,dA(\z)
\\ & = \widehat{f}(0)\overline{\widehat{g}(0)}(\om^{\ast,0})_1+4\int_{\D} f'(\z)\overline{g'(\z)}\, \om^{\ast,1}(\z)\,dA(\z)
\\ & = \sum_{j=0}^{n-1} 4^j\widehat{f^{(j)}}(0)\overline{\widehat{g^{(j)}}(0)}(\om^{\ast,j})_1+4^n\int_{\D} f^{(n)}(\z)\overline{g^{(n)}(\z)}\, \om^{\ast,n}(\z)\,dA(\z).
\end{split}\end{equation}
So, taking $g=B^{\nu}_z$ and $f\in A^1_\om$, Theorem~\ref{th:perala}, the above identity and part (i) yield
\begin{equation}\begin{split}\label{eq:j2}
R^{\om,\nu}(f)(z) &= \langle f, B^{\nu}_z\rangle_{A^2_\om}
\\ & = \sum_{j=0}^{n-1} 4^j\widehat{f^{(j)}}(0)\overline{\widehat{(B^{\nu}_z)^{(j)}}(0)}(\om^{\ast,j})_1+4^n\int_{\D} f^{(n)}(\z)\overline{(B^{\nu}_z)^{(n)}(\z)}\, \om^{\ast,n}(\z)\,dA(\z)
\\ & =  \sum_{j=0}^{n-1} 4^j\widehat{f^{(j)}}(0)\overline{\widehat{(B^{\nu}_z)^{(j)}}(0)}(\om^{\ast,j})_1
 +4^nz^n R^{\om^{\ast,n},V_{\nu,n}}(f^{(n)})(z), \quad z\in\D.
\end{split}\end{equation}
Next, bearing in mind part (i)-(ii) it follows that
\begin{equation}\begin{split}\label{eq:j3}
\widehat{f^{(j)}}(0) &=j! \widehat{f}(j)
\\ (V_{\nu,j})_1 &=\frac{\nu_{2j+1}}{j!}
\\ \widehat{(B^{\nu}_z)^{(j)}}(0) & =\overline{z}^j\widehat{B^{V_{\nu,j}}_z}=\frac{\overline{z}^j}{ (V_{\nu,j})_1}=\frac{j! \overline{z}^j}{ \nu_{2j+1}}
\\ (\om^{\ast,j})_1 & = \frac{\om_{2j+1}}{4^j (j!)^2}.
\end{split}\end{equation}
Therefore, joining \eqref{eq:j2} and  \eqref{eq:j3} we prove (iii). This finishes the proof.

	\end{proof}

	In the particular case $\om=1$, we will write  
$W^n = \om^{\ast,n}, \, n\in \N\cup\{0\}.$
 Then, 
joining \eqref{eq:mu+} and  Proposition~\ref{prop: expresión derivada fraccionaria general} we obtain the following formula, which  is instrumental in the proofs of some of the main results of the manuscript, and relates the fractional derivative $D^\mu(f)$ with the $n$th-derivative of $f$.

	\begin{corollary}\label{prop: expresión derivada fraccionaria}
		Let $\mu$ be a radial weight, then 
		$$
		D^{\mu}(f)(z) = \sum_{j=0}^{n-1} \frac{\widehat{f}(j)}{\mu_{2j+1}}z^j + 4^n z^n R^{W^n,V_{\mu_+,n}}(f^{(n)})(z),\quad f\in A^1,z\in\D,n\in \N.
		$$
	\end{corollary}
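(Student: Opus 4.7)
The plan is to obtain this corollary as a direct specialization of Proposition~\ref{prop: expresión derivada fraccionaria general}(iii). The identity \eqref{eq:mu+} tells us that $D^{\mu}(f)(z) = R^{1,\mu_+}(f)(z)$, so it is natural to apply Proposition~\ref{prop: expresión derivada fraccionaria general}(iii) with the choice $\om \equiv 1$ and $\nu = \mu_+$. With the convention $\om^{\ast,0}=\om$, the iterated weight $1^{\ast,n}$ coincides with $W^n$ by definition, so the error term on the right-hand side of (iii) becomes $4^n z^n R^{W^n, V_{\mu_+,n}}(f^{(n)})(z)$, matching the statement.

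The only nontrivial piece is to identify the constants in the polynomial $\sum_{j=0}^{n-1}\frac{\om_{2j+1}}{\nu_{2j+1}}\widehat{f}(j)z^j$ provided by Proposition~\ref{prop: expresión derivada fraccionaria general}(iii). For this I would verify, by a straightforward application of Fubini's theorem, that
$$
(\mu_+)_{2j+1} = \int_0^1 s^{2j+1}\int_s^1 \frac{\mu(t)}{t}\,dt\,ds = \int_0^1 \frac{\mu(t)}{t}\int_0^t s^{2j+1}\,ds\,dt = \frac{\mu_{2j+1}}{2j+2},
$$
while trivially $1_{2j+1} = \int_0^1 s^{2j+1}\,ds = \frac{1}{2j+2}$. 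Consequently $\frac{1_{2j+1}}{(\mu_+)_{2j+1}} = \frac{1}{\mu_{2j+1}}$, which yields the finite sum $\sum_{j=0}^{n-1}\frac{\widehat{f}(j)}{\mu_{2j+1}}z^j$ in the claimed formula.

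There is essentially no obstacle beyond this moment calculation: the hypothesis $f \in A^1$ (which equals $A^1_1$) ensures we are in a setting where Proposition~\ref{prop: expresión derivada fraccionaria general}(iii) applies, and the remaining bookkeeping is bookmark-level substitution. So the proof proposal is simply to cite \eqref{eq:mu+}, invoke Proposition~\ref{prop: expresión derivada fraccionaria general}(iii) with $\om\equiv 1$ and $\nu=\mu_+$, carry out the Fubini computation above to match the polynomial part, and recognize $1^{\ast,n}=W^n$ to match the integral-derivative fractional term.
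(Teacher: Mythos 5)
Your proposal is correct and is exactly the paper's route: the corollary is stated as an immediate consequence of \eqref{eq:mu+} together with Proposition~\ref{prop: expresión derivada fraccionaria general}(iii) applied with $\om\equiv 1$ and $\nu=\mu_+$, with $1^{\ast,n}=W^n$ by definition. Your Fubini computation showing $(\mu_+)_{2j+1}=\frac{\mu_{2j+1}}{2j+2}$ and hence $\frac{1_{2j+1}}{(\mu_+)_{2j+1}}=\frac{1}{\mu_{2j+1}}$ is the same moment identity already encoded in \eqref{eq:mu+}, so the argument matches the paper's in every respect.
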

	The following is a technical lemma that will be useful for our purposes.

	\begin{lemma}\label{lema pesos tec}
		Let $\mu$ be a radial weight. Then,
		\begin{itemize}
			\item[(i)] If $n\in\N$, then 
			\begin{equation}\label{eq: lema pesos cond 1}
				W^n(r) \asymp (1-r)^{2n},\quad \frac{1}{2}\leq r<1;
			\end{equation}
			\item[(ii)]
			$$
			\widehat{\mu_+}(r) \leq\mug(r) (1-r),\quad 0\leq r <1;
			$$
			\item[(iii)] If $n\in\N$, then
			\begin{equation}\label{eq: lema pesos cond 3}
			V_{\mu_+,n}(r) \lesssim \mug(r)(1-r)^n,\quad 0\leq r <1;
			\end{equation}
			\item[(iv)] If $\mu\in\DD$ ,then 
			$$
			\widehat{\mu_+}(r) \gtrsim \mug(r)(1-r),\quad 0\leq r <1;
			$$
			\item[(v)] If $\mu\in \DD$ and $n\in\N$, then
			\begin{equation}\label{eq: lema pesos cond 5}
				\widehat{V_{\mu_+,n}}(r) \gtrsim \mug(r)(1-r)^{n+1},\quad \frac{1}{2}\leq r <1.
			\end{equation}
		\end{itemize}
	\end{lemma}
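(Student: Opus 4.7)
All five statements are proved by induction on $n$, combining Fubini's theorem, the monotonicity of $\mug$, and (for the lower bounds) the doubling property of $\mu$. The key preliminary identity, obtained by swapping the order of integration, is
\begin{equation*}
\widehat{\mu_+}(r) = \int_r^1 \int_s^1 \frac{\mu(t)}{t}\,dt\,ds = \int_r^1 \frac{\mu(t)}{t}(t-r)\,dt,
\end{equation*}
which displays $\widehat{\mu_+}(r)$ as a weighted average of $\mu$ with weight $(t-r)/t$.

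For (i), the base case $W^0=1$ is immediate. For $r\ge 1/2$ and $s\in[r,1]$, the elementary inequalities $\log x \ge (x-1)/x$ and $\log x \le x-1$ give $\log(s/r)\asymp s-r$, so up to constants
$W^n(r) = \int_r^1 s\,W^{n-1}(s)\log(s/r)\,ds$ is comparable to $\int_r^1 W^{n-1}(s)(s-r)\,ds$; the inductive hypothesis $W^{n-1}(s)\asymp (1-s)^{2n-2}$ then yields comparability with $(1-r)^{2n}$. For (ii), the Fubini identity above together with $(t-r)/t = 1 - r/t \le 1-r$ (since $t\le 1$) immediately gives $\widehat{\mu_+}(r)\le (1-r)\mug(r)$. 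For (iii), I induct on $n$: the estimate $V_{\mu_+,n}(r) = 2\int_r^1 t\,V_{\mu_+,n-1}(t)\,dt \le 2\int_r^1 V_{\mu_+,n-1}(t)\,dt$, combined with the inductive hypothesis $V_{\mu_+,n-1}(t)\lesssim \mug(t)(1-t)^{n-1}$ and the monotonicity $\mug(t)\le\mug(r)$ for $t\ge r$, produces the upper bound $\mug(r)(1-r)^n$.

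For the lower bounds (iv) and (v) the hypothesis $\mu\in\DD$ enters. In (iv), I restrict the Fubini integral above to $t\in[(1+r)/2,1]$, where $t-r\ge (1-r)/2$, to obtain $\widehat{\mu_+}(r)\ge \frac{1-r}{2}\mug\bigl(\frac{1+r}{2}\bigr)$, and the doubling inequality $\mug\bigl(\frac{1+r}{2}\bigr)\gtrsim \mug(r)$ closes the argument. For (v), I first establish by induction that $V_{\mu_+,n}(r)\gtrsim \mug(r)(1-r)^n$ for $r\ge 1/2$: the base case $n=1$ follows from (iv) together with $2t\ge 1$ for $t\ge 1/2$, and in the inductive step restricting the $t$-integral to $[r,(1+r)/2]$ and invoking the doubling property again deliver the bound. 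Integrating this pointwise lower bound on $V_{\mu_+,n}$ from $r$ to $1$ with the same restriction trick absorbs one more factor $(1-r)$ and yields (v). The principal (modest) obstacle throughout is bookkeeping: maintaining the threshold $r\ge 1/2$ that allows one to replace $\log(s/r)$ by $s-r$ in (i) and the factor $2t$ by a constant in the iteration defining $V_{\mu_+,n}$; beyond that, every step is a mechanical application of Fubini, monotonicity, and doubling.
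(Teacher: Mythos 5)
Your proof is correct and follows essentially the same route as the paper's: the Fubini identity $\widehat{\mu_+}(r)=\int_r^1\mu(t)\frac{t-r}{t}\,dt$, the comparison $\log(s/r)\asymp s-r$ for $r\ge\frac12$, induction for (i) and (iii) with monotonicity of $\mug$, and restriction of the integral to a half-interval plus the doubling property for the lower bounds (iv)--(v). The only cosmetic difference is in (v), where you first establish the pointwise bound $V_{\mu_+,n}(r)\gtrsim\mug(r)(1-r)^n$ and then integrate once more, whereas the paper runs the induction directly on $\widehat{V_{\mu_+,n}}$; the two organizations are equivalent.
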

		\begin{proof}
			Since
			$
			(s-r) \leq s \log\frac{s}{r}\leq \frac{s}{r}(s-r)\leq \frac{1}{r}(s-r),\, 0<r\leq s < 1,\,$ then
			\begin{equation}\label{eq: lema pesos ast}
				W^n(r) \asymp \int_r^1 (s-r)W^{n-1}(s)ds, \quad \frac{1}{2}\le r<1, \, n\in\N.
			\end{equation}
Threfore, we get (i) by induction. 
Let us prove  (ii) and (iv).
			Let $0\leq r <1$, then by Fubini's theorem
			$$
			\widehat{\mu_+}(r) = \int_r^1 \int_s^1 \frac{\mu(t)}{t}dt \, ds = \int_r^1 \mu(t) \frac{t-r}{t}\, dt.
			$$
			Fixed $0\leq r <1$, the function $h_r(t)=\frac{t-r}{t}$ is non-decreasing in the interval $(0,1]$, so 
			$$
			\widehat{\mu_+}(r)\le
\mug(r)(1-r), \quad 0\le r<1,
			$$
			obtaining (i).
			On the other hand, by applying that $\mu\in \DD$ and that $h_r$ is non-decreasing
			\begin{equation*}
				\begin{split}
					\widehat{\mu_+}(r) &= \int_r^1 \mu(t) \frac{t-r}{t}\,dt 
                          \geq \int_{\frac{1+r}{2}}^1 \mu(t) \frac{t-r}{t}dt 
                         \\ & \geq \mug\left (\frac{1+r}{2} \right )\frac{1-r}{1+r} \gtrsim \mug(r)(1-r), \quad 0\le r<1,
				\end{split}
			\end{equation*}
			concluding the proof for (ii) and (iv). \\
			To prove (iii) we work by induction. If $n=1$, then by (ii)
			$$
			V_{\mu_+,1}(r) =  2\int_{r}^1s\mu_+(s)ds\ \lesssim \widehat{\mu_+}(r) \leq \mug(r)(1-r),\quad 0\leq r <1.
			$$
			Now assume that \eqref{eq: lema pesos cond 3} holds for $n\in\N$. Then, 
			\begin{equation*}
				\begin{split}
					V_{\mu_+,n+1}(r)& 
\lesssim \int_{r}^1 \mug(s)(1-s)^n  ds 
					\leq 
 \mug(r)(1-r)^{n+1},\quad 0\leq r <1.  
				\end{split}
			\end{equation*}
			Lastly, we are going to prove (v). If $n=1$ using  (iv) and the hypotheses $\mu\in\DD$
			\begin{equation*}
				\begin{split}
					\widehat{V_{\mu_+,1}}(r) & 
\geq \int_r^1 \widehat{\mu_+}(s)ds \gtrsim 
\int_r^{\frac{1+r}{2}} \mug(s)(1-s)ds
\gtrsim \mug(r)(1-r)^2, \quad \frac{1}{2}\leq r <1.
				\end{split}
			\end{equation*}
			Now, assume that \eqref{eq: lema pesos cond 5} holds for $n\in\N$. Then, 
			\begin{equation*}
				\begin{split}
					\widehat{V_{\mu_+,n+1}}(r) 
\gtrsim 
\int_{r}^{\frac{1+r}{2}}\mug(s)(1-s)^{n+1}dr 
\gtrsim \mug(r)(1-r)^{n+2}, \quad \frac{1}{2}\leq r <1.
				\end{split}
			\end{equation*}
This finishes the proof.
		\end{proof}

		\section{Fractional derivative characterization of Hardy space}\label{s3}
We begin this section by proving the following inequality.

		\begin{theorem}\label{th: norma equivalente <}
	Let  $\mu$ be a radial weight and $0<p<\infty$. Then,
			\begin{equation}\label{eq: th1 enunciado}
				\nm{f}_{H^p}^p \lesssim \int_\T \left [ \int_{\Gamma(\xi)} \abs{D^\mu(f)(z)}^2 \left (\frac{\mug(z)}{1-\abs{z}}\right )^2 dA(z) \right ]^{\frac{p}{2}}\abs{d\xi},\quad f\in \H(\D).
			\end{equation}
		\end{theorem}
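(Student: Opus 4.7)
The strategy is to combine the classical Calderon area-function characterization \eqref{eq: norma calderon} with an integral reproducing formula that recovers $f$ from $D^\mu(f)$, thereby converting the area integral of a high-order derivative of $f$ into the area integral of $D^\mu(f)$ appearing in \eqref{eq: th1 enunciado}.

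To set up the reproducing formula, I observe that $\widehat{f}(k)=\mu_{2k+1}\widehat{D^\mu f}(k)$ and, by Fubini, $(\mu_+)_{2k+1}=\mu_{2k+1}/(2k+2)$, so that $f=R^{\mu_+,1}(D^\mu f)$. Combined with Theorem~\ref{th:perala} and the identity $B^1_z(\z)=(1-\bar z\z)^{-2}$, this yields
\[
 f(z)=\int_{\D} D^\mu(f)(\z)\,\frac{\mu_+(\z)}{(1-z\bar\z)^2}\,dA(\z),\qquad z\in\D,
\]
and, after differentiating $n$ times in $z$,
\[
 f^{(n)}(z)=(n+1)!\int_{\D} D^\mu(f)(\z)\,\frac{\bar\z^{n}\,\mu_+(\z)}{(1-z\bar\z)^{n+2}}\,dA(\z).
\]

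Next, I would apply \eqref{eq: norma calderon} with $n$ chosen sufficiently large depending on $p$. The initial Taylor coefficient terms $|\widehat{f}(j)|^p$, $j<n$, that the Calderon formula produces are absorbed into the right-hand side of \eqref{eq: th1 enunciado} by a standard subharmonic argument, since $|\widehat{f}(j)|=\mu_{2j+1}|\widehat{D^\mu f}(j)|$ is dominated by a mean value of $|D^\mu f|$ and hence by its area integral. The remaining and principal task is then to prove the tent-space inequality
\[
 \int_{\T}\Big(\int_{\Gamma(\xi)}|f^{(n)}(z)|^2(1-|z|)^{2n-2}\,dA(z)\Big)^{p/2}|d\xi| \lesssim \int_{\T}\Big(\int_{\Gamma(\xi)}|D^\mu(f)(z)|^2\Big(\frac{\widehat\mu(z)}{1-|z|}\Big)^{2} dA(z)\Big)^{p/2}|d\xi|.
\]

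To establish this inequality, I would apply Cauchy-Schwarz to the integral representation of $f^{(n)}(z)$ with an appropriate diagonal weight involving $\widehat\mu(\z)/(1-|\z|)$, and rely on the universal upper bound $\widehat{\mu_+}(r)\le(1-r)\widehat\mu(r)$ from Lemma~\ref{lema pesos tec}(ii) (which holds for any radial weight) to estimate the resulting kernel integrals. Combined with a sub-mean-value estimate on pseudohyperbolic disks, Fubini, and a standard cone-enlargement argument, this should yield the desired tent-space domination. The main obstacle lies precisely here: because $\mu$ is not assumed doubling, one cannot appeal to the more flexible $\Dd$-type estimates and must carefully balance the polynomial singularity of $(1-z\bar\z)^{-(n+2)}$ against $\mu_+(\z)$ to generate exactly the factor $(\widehat\mu(\z)/(1-|\z|))^2$ on the right-hand side, following the strategy of \cite{DuanRattyaWang2025}.
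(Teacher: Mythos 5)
Your overall architecture matches the paper's: reduce to the Calderon area characterization, represent $f^{(n)}$ as an integral against $D^\mu(f)$, and prove a tent-space domination via Cauchy--Schwarz, Fubini, Forelli--Rudin estimates and cone enlargement. The reproducing formula $f=R^{\mu_+,1}(D^\mu f)$ and the differentiated identity you write are correct. However, there is a genuine quantitative gap that kills the argument for small $p$: in your representation
$f^{(n)}(z)=(n+1)!\int_{\D}D^\mu(f)(\z)\,\overline{\z}^{\,n}\mu_+(\z)(1-z\overline{\z})^{-(n+2)}dA(\z)$,
differentiating $n$ times raises the singularity of the kernel to order $n+2$ but leaves the weight equal to $\mu_+(\z)$, for which the only available bound is $\mu_+(\z)\lesssim\mug(\z)$ (your cited inequality $\widehat{\mu_+}(r)\le(1-r)\mug(r)$ concerns the \emph{tail integral} of $\mu_+$, not $\mu_+$ itself, and does not enter your formula). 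Since $\mu$ is an arbitrary radial weight, $\mug$ supplies no polynomial decay, so after Cauchy--Schwarz the auxiliary factor $\int_\D(1-|\z|)^a|1-z\overline{\z}|^{-(2n+4-b)}dA(\z)$ forces $a>-1$, and a bookkeeping of the exponents through Fubini and Lemma~\ref{le:OAcase1} shows that the kernel factor you end up with on the right-hand side is $\bigl(\tfrac{1-|\z|}{|1-\overline{\z}\xi|}\bigr)^{2-a}$ with $2-a<3$, \emph{independently of $n$, $b$ and $\lambda$} (this is forced by homogeneity: the diagonal balance $k=m+n+2$ between kernel order $k$ and weight gain $(1-|\z|)^m$ fixes the excess decay at $2m+2-a$, and for you $m=0$). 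The final cone-restoration step via \cite[Lemma~4]{arsenovic} requires this exponent to exceed $\max\{1,2/p\}$, so your argument only closes for $p>2/3$, while the theorem is claimed for all $0<p<\infty$.

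The missing idea is exactly Proposition~\ref{prop: expresión derivada fraccionaria general}/Corollary~\ref{prop: expresión derivada fraccionaria}: instead of differentiating the kernel of the first-order formula, one integrates by parts $n$ times in the reproducing integral (identity \eqref{eq:j1}), which transfers the representation to $4^nz^nf^{(n)}=R^{V_{\mu_+,n},W^n}(D^\mu f)-\text{(polynomial)}$. There the weight $V_{\mu_+,n}(\z)\lesssim\mug(\z)(1-|\z|)^{n}$ (Lemma~\ref{lema pesos tec}(iii)) carries $n$ extra powers of $(1-|\z|)$ while the kernel $B^{W^n}_z$ has singularity of order $2n+2$ (since $\widehat{W^n}(r)\asymp(1-r)^{2n+1}$), so the available off-diagonal exponent becomes $2n+2-\e$ with $\e>-1$, which can be made larger than $\max\{1,2/p\}$ for any $p$ by taking $n$ large. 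You should also make explicit the reduction to $f\in\H(\overline{\D})$ before invoking Theorem~\ref{th:perala} (which needs $D^\mu f\in A^1$ of the relevant weight) and the passage to general $f$ by dilations; the paper does this via \cite[Theorem 1.2]{AguiMasPelRat}. Your treatment of the initial Taylor coefficients is fine.
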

We will use the following three lemmas in the proof of Theorem~\ref{th: norma equivalente <}.

\begin{lemma}\label{le:a}
Let $0<p<\infty$, $n\in\N$ and $a>0$. Then, 
$$\nm{f}_{H^p}^p \asymp \sum_{j=0}^{n-1}\abs{f^{(j)}(0)}^p + \int_{\T} \left (\int_{\Gamma(\z)} |z|^{2a}\abs{f^{(n)}(z)}^2(1-\abs{z})^{2n-2}dA(z) \right )^{\frac{p}{2}} \abs{d\z}, \quad  f\in\H(\D).$$
\end{lemma}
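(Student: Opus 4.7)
Lemma~\ref{le:a} is a $|z|^{2a}$-weighted variant of the classical Calder\'on formula \eqref{eq: norma calderon}, so I would prove the two inequalities of $\asymp$ separately, using \eqref{eq: norma calderon} as a black box and a cone-splitting argument at $|z|=1/2$ to handle the weight.

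The direction ``right-hand side $\lesssim \|f\|_{H^p}^p$'' is immediate: the initial Taylor coefficients satisfy $|f^{(j)}(0)|\lesssim \|f\|_{H^p}$ by the standard pointwise estimate, and since $|z|^{2a}\le 1$ on $\D$, the weighted area integral is dominated by its unweighted analog, which is $\lesssim \|f\|_{H^p}^p$ by \eqref{eq: norma calderon}.

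For the reverse inequality, apply \eqref{eq: norma calderon} to reduce to bounding the classical area integral by the right-hand side of the lemma. Split the inner integral into the pieces $\int_{\Gamma(\zeta)\cap\{|z|\ge 1/2\}}$ and $\int_{\Gamma(\zeta)\cap\{|z|<1/2\}}$, combined via $(A+B)^{p/2}\lesssim A^{p/2}+B^{p/2}$. On the outer piece the bound $|z|^{2a}\ge 2^{-2a}$ absorbs the contribution into the weighted area integral on the right-hand side. On the inner piece both the weight $(1-|z|)^{2n-2}$ and the measure of the cone section are bounded by absolute constants, so the contribution is $\lesssim \sup_{|w|\le 1/2}|f^{(n)}(w)|^p$ after raising to the $p/2$ power and integrating in $\zeta$.

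The main obstacle is therefore the pointwise estimate $\sup_{|w|\le 1/2}|f^{(n)}(w)|^p \lesssim$ right-hand side of the lemma. Since $|f^{(n)}|^p$ is subharmonic for $p>0$, Hardy's convexity theorem together with the mean value inequality on a disk $D(w,1/4)\subset\D$ gives $\sup_{|w|\le 1/2}|f^{(n)}(w)|^p \lesssim M_p(3/4, f^{(n)})^p$, and a further subharmonicity step with disks of radius $1/8$ centered on the circle $|u|=3/4$ bounds the latter by an $L^p$ integral of $|f^{(n)}|^p$ over the annulus $\{5/8\le |u|\le 7/8\}$, on which $|u|^{2a}$ is bounded below by a positive constant. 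For $p=2$ a Fubini computation using $|\{\zeta\in\T: u\in\Gamma(\zeta)\}|\asymp 1-|u|$ identifies this annular $L^2$ integral with the weighted area integral on the right-hand side directly by coefficient expansion. For general $p\in(0,\infty)$, the transition from $|f^{(n)}|^p$ on the annulus back to the $|f^{(n)}|^2$ appearing inside the right-hand side area integral is mediated by a Fefferman--Stein-type equivalence between the pointwise $L^p$ norm of $f^{(n)}$ and the Lusin area-function $L^p$ norm with the $|z|^{2a}$ weight, which closes the estimate uniformly in $p$.
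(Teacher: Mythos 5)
Your overall strategy --- deducing the lemma from the classical Calder\'on formula \eqref{eq: norma calderon} by splitting the cone at $|z|=1/2$ --- is sound and genuinely different from the paper's proof. The paper instead uses \cite[Lemma~4]{arsenovic} to replace the cone integral by a global integral against the kernel $\left(\frac{1-|z|}{|1-\overline{\zeta}z|}\right)^{\lambda}$, removes the factor $|z|^{2a}$ by the monotonicity of the integral means of the analytic functions $z\mapsto f^{(n)}(z)(1-\overline{\zeta}z)^{-\lambda/2}$ against a radial weight, and then converts back. Your easy inequality ($|z|^{2a}\le 1$ plus \eqref{eq: norma calderon}) and your reduction of the hard inequality to the single estimate $\sup_{|w|\le 1/2}|f^{(n)}(w)|^p\lesssim\mathrm{RHS}$ are both correct.

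The gap is in your last step. The ``Fefferman--Stein-type equivalence between the pointwise $L^p$ norm of $f^{(n)}$ and the Lusin area-function $L^p$ norm with the $|z|^{2a}$ weight'' is, up to cosmetic changes, the statement of Lemma~\ref{le:a} itself, so invoking it there is circular; and you do not say which theorem you mean or why it would hold with the weight $|z|^{2a}(1-|z|)^{2n-2}$ for all $0<p<\infty$. Fortunately the estimate you need is elementary and requires no such equivalence. Since $D\!\left(\tfrac34\zeta,\tfrac1{16}\right)\subset\Gamma(\zeta)\cap\{\tfrac{11}{16}\le|z|\le\tfrac{13}{16}\}$ and $|z|^{2a}(1-|z|)^{2n-2}\ge c(a,n)>0$ on that set, subharmonicity of $|f^{(n)}|^2$ gives
\begin{equation*}
\left|f^{(n)}\!\left(\tfrac34\zeta\right)\right|^2\lesssim\int_{D(\frac34\zeta,\frac1{16})}|f^{(n)}(z)|^2\,dA(z)\lesssim\int_{\Gamma(\zeta)}|z|^{2a}|f^{(n)}(z)|^2(1-|z|)^{2n-2}\,dA(z),\qquad\zeta\in\T.
\end{equation*}
Raising this to the power $p/2$ and integrating over $\T$ yields $M_p(3/4,f^{(n)})^p\lesssim\mathrm{RHS}$, which together with your bound $\sup_{|w|\le1/2}|f^{(n)}(w)|^p\lesssim M_p(3/4,f^{(n)})^p$ closes the argument uniformly in $p$, with no separate treatment of $p=2$. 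With this replacement your proof is complete, and arguably more elementary than the paper's.
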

\begin{proof}
Since $M_p(r,g)$  is an increasing function of $r$ for any $g\in \H(\D)$, for any radial weight $\omega$
\begin{equation}\label{eq:j4}
\|g\|_{A^p_\om}\asymp \||z|^a g\|^p_{L^p_\om}, \quad  g\in\H(\D).
\end{equation}  
On the other hand, by 
\cite[Lemma~4]{arsenovic} 
there is $\lambda(p)=\lambda>0$ such that
\begin{equation*}\begin{split}
 & \int_{\T} \left (\int_{\Gamma(\z)} |z|^{2a}\abs{f^{(n)}(z)}^2(1-\abs{z})^{2n-2}dA(z) \right )^{\frac{p}{2}} \abs{d\z}
\\ & \asymp \int_{\T} \left (\int_{\D} \left(\frac{1-|z|}{|1-\overline{\z}z|} \right)^{\lambda} |z|^{2a}\abs{f^{(n)}(z)}^2(1-\abs{z})^{2n-2}\,dA(z) \right )^{\frac{p}{2}} \abs{d\z}, \quad  f\in\H(\D).
\end{split}\end{equation*}
So, by \eqref{eq:j4} with $g_\z(z)=\frac{f^{(n)}(z)}{(1-\overline{\z}z)^\lambda}$ and the radial weight $(1-\abs{z})^{2n-2+\lambda}$, and another application of  \cite[Lemma~4]{arsenovic} it follows that
\begin{equation*}\begin{split}
 & \int_{\T} \left (\int_{\Gamma(\z)} |z|^{2a}\abs{f^{(n)}(z)}^2(1-\abs{z})^{2n-2}dA(z) \right )^{\frac{p}{2}} \abs{d\z}
\\ & \asymp \int_{\T} \left (\int_{\D} \left(\frac{1-|z|}{|1-\overline{\z}z|} \right)^{\lambda} |z|^{2a}\abs{f^{(n)}(z)}^2(1-\abs{z})^{2n-2}\,dA(z) \right )^{\frac{p}{2}} \abs{d\z}
\\ & \asymp \int_{\T} \left (\int_{\D} \left(\frac{1-|z|}{|1-\overline{\z}z|} \right)^{\lambda} \abs{f^{(n)}(z)}^2(1-\abs{z})^{2n-2}\,dA(z) \right )^{\frac{p}{2}} \abs{d\z}
\\ & \asymp \int_{\T} \left (\int_{\Gamma(\z)} \abs{f^{(n)}(z)}^2(1-\abs{z})^{2n-2}dA(z) \right )^{\frac{p}{2}} \abs{d\z}, \quad  f\in\H(\D).
\end{split}\end{equation*}
This together with \eqref{eq: norma calderon} finishes the proof.
\end{proof}

Let us recall that, for any $0<p,q<\infty$ and $\nu$ a radial weight, the tent space $T_p^q(\nu)$ consists of complex-valued measurable functions $f$ on $\D$ such that 
	$$
	\|f\|_{T_{p}^{q}(\nu)}
	=  \left(\int_{\T}\left(\int_{\Gamma(\xi)}|f(z)|^p \nu(z)\frac{dA(z)}{1-|z|}\right)^{\frac{q}{p}}\,|d\xi|\right)^{\frac1q}<\infty.
$$
The next result follows from \cite[Lemma 3.1]{AguiMasPelRat}.
\begin{letterlemma}\label{le:b}
Let  $\om$ be a radial weight, $0<p<\infty$ and $n\in\N\cup\{0\}$. Then, 
 for any $0<r<1$  there is $C=C(n,p,\om,r)$ such that
\begin{equation}\label{eq:UC:ATp}
\sup_{|z|\le r}|f^{(n)}(z)|
\le C\|f\|_{AT^p_2(\om)},
\qquad f\in\H(\D).
\end{equation}
As a consequence, the convergence in $AT^p_2(\om)$ implies the uniform convergence on compacts subsets, and so $AT^p_2(\om)$ is a complete space.
\end{letterlemma}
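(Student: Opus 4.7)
The completeness of $AT^p_2(\om)$ follows by a standard argument once the pointwise bound is known: a Cauchy sequence in the tent quasi-norm is Cauchy uniformly on compacts, converges to some $f\in\H(\D)$, and Fatou's lemma upgrades this to convergence in the tent norm. I focus on the pointwise estimate. By Cauchy's integral formula on $|\z|=(1+r)/2$, $|f^{(n)}(z)|\lesssim_{n,r}\sup_{|\z|\le(1+r)/2}|f(\z)|$, so it suffices to treat $n=0$.

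The core argument depends on whether $p\ge 2$ or $p<2$, but both rest on the same initial observation. Since $|f|^q$ is subharmonic for every $q>0$ and $M_q(\cdot,f)$ is nondecreasing on $[0,1)$, one gets $\sup_{|z|\le r}|f(z)|^q\lesssim_{r,s}M_q(s,f)^q$ for any $s\in(r,1)$. Since $\widehat\om$ is continuous and $\widehat\om(s)>0$, I pick $s<s'<1$ with $\widehat\om(s)-\widehat\om(s')>0$; monotonicity of $M_q$ then delivers
\[
M_q(s,f)^q\bigl(\widehat\om(s)-\widehat\om(s')\bigr)\le \int_s^{s'} M_q(t,f)^q\om(t)\,dt \asymp \int_{s<|w|<s'}|f(w)|^q\om(w)\,dA(w).
\]
Thus in either case $|f(z)|^q$ is controlled by the weighted Bergman-type integral of $|f|^q$ over a sub-disc $\{|w|\le r_0\}$ for a suitable $r_0\in(r,1)$.

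For $p\ge 2$ I take $q=2$: Fubini with $|\{\xi\in\T:w\in\Gamma(\xi)\}|\asymp 1-|w|$ gives $\|f\|^2_{A^2_\om}\asymp \|f\|^2_{AT^2_2(\om)}$, and H\"older's inequality on $\T$ supplies $\|f\|_{AT^2_2(\om)}\le C_p\|f\|_{AT^p_2(\om)}$, closing the chain. For $0<p<2$ I take $q=p$ and reduce to bounding $\int_{|w|\le r_0}|f|^p\om\,dA$ by $\|f\|^p_{AT^p_2(\om)}$; this is delivered by the reverse Jensen inequality (concavity of $x\mapsto x^{p/2}$) applied to the finite measure $\om\,dA/(1-|w|)$ on each cone region $\Gamma(\xi)\cap\{|w|\le r_0\}$, yielding
\[
A_f(\xi)^p:=\Bigl(\int_{\Gamma(\xi)}|f(w)|^2\om(w)\,\frac{dA(w)}{1-|w|}\Bigr)^{p/2}\gtrsim_{\om,r_0,p} \int_{\Gamma(\xi)\cap\{|w|\le r_0\}}|f(w)|^p\om(w)\,\frac{dA(w)}{1-|w|},
\]
and then Fubini finishes the bound.

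The main technical pressure is precisely the split at $p=2$: for $p\ge 2$ one needs the outer H\"older inequality on $\T$ to pass from $L^2$ to $L^p$, while for $p<2$ one instead exploits the reverse Jensen inequality inside the cones. A minor additional adjustment is needed when $z$ is close to the origin, since cones $\Gamma(\xi)$ do not contain discs about $0$ of radius close to one; a preliminary mean-value step first pushes the question onto a circle of radius bounded away from $0$, after which the arguments above apply verbatim.
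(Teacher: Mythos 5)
Your proof is correct. There is nothing internal to compare it with: the paper does not prove this lemma but quotes it as a consequence of \cite[Lemma 3.1]{AguiMasPelRat} (a preprint), so your write-up in effect supplies the missing self-contained argument. All the steps check out: the reduction to $n=0$ by Cauchy's formula; the subharmonicity bound $\sup_{|z|\le r}|f|^q\lesssim M_q(s,f)^q$ combined with monotonicity of $M_q$ and the choice of $s<s'$ with $\widehat{\om}(s)-\widehat{\om}(s')>0$ (always possible since $\widehat{\om}(s)>0$ and $\widehat{\om}$ decreases to $0$); the Fubini identity based on $|\{\xi\in\T: w\in\Gamma(\xi)\}|\asymp 1-|w|$; the outer H\"older inequality on $\T$ for $p\ge 2$; and, for $p<2$, the H\"older/reverse-Jensen step, which is legitimate because the measure $\om(w)(1-|w|)^{-1}dA(w)$ restricted to $\{|w|\le r_0\}$ has finite total mass (at most $(1-r_0)^{-1}\|\om\|_{L^1}$). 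The split at $p=2$ is genuinely forced by the direction of convexity of $x\mapsto x^{p/2}$, so it is not a defect. Two minor remarks. First, the closing paragraph about an ``adjustment near the origin'' is unnecessary: you never need a single cone to contain a disc about $0$, because the Fubini step integrates over all $\xi\in\T$ and every $w\ne 0$ lies in $\Gamma(\xi)$ for an arc of $\xi$'s of length $2(1-|w|)$; your annulus $\{s<|w|<s'\}$ with $s>r>0$ already avoids any degeneracy. Second, for $0<p<1$ the completeness argument should be phrased for the quasi-norm (apply Fatou to $\|f_n-f_m\|_{AT^p_2(\om)}^p$), which is what your sketch does implicitly.
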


Now, let us prove  following inequality which is a direct byproduct of  \cite[Lemma 2.5]{OrtegaFabrega}. 

\begin{letterlemma}\label{le:OAcase1}
Let $s>-1$, $r, t\ge 0$ such that $r+t-s> 2$. If $r-s<2$ and $t-s<2$, then  
$$\int_{\D}\frac{(1-|\z|)^s}{|1-\overline{\z}a|^r |1-\overline{\z}\xi|^t}\,dA(\z)\lesssim \frac{1}{ |1-\overline{a}\xi|^{t+r-s-2}}, \quad a\in \D, \xi\in \T.$$
\end{letterlemma}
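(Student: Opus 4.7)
The plan is to obtain the stated inequality as the boundary specialization of the classical interior two-kernel estimate of Ortega--Fabrega, followed by a routine limiting argument. Namely, \cite[Lemma~2.5]{OrtegaFabrega} provides, under exactly the same hypotheses on $r,s,t$, the inequality
$$\int_\D \frac{(1-|\z|)^s}{|1-\overline{\z}a|^r|1-\overline{\z}b|^t}\,dA(\z) \;\lesssim\; \frac{1}{|1-\overline{a}b|^{r+t-s-2}}, \qquad a,b\in\D,$$
with constant independent of $a$ and $b$; our statement is the case where $b$ is pushed to the boundary.

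\textbf{Passage to the boundary.} Fix $a\in\D$ and $\xi\in\T$, and apply the interior estimate with $b=\rho\xi$, $\rho\in(0,1)$, obtaining
$$\int_\D \frac{(1-|\z|)^s}{|1-\overline{\z}a|^r|1-\rho\overline{\z}\xi|^t}\,dA(\z) \;\lesssim\; \frac{1}{|1-\rho\overline{a}\xi|^{r+t-s-2}},$$
with a constant independent of $\rho$. Along any sequence $\rho_n\uparrow 1$ the integrands on the left converge pointwise a.e.\ to $(1-|\z|)^s|1-\overline{\z}a|^{-r}|1-\overline{\z}\xi|^{-t}$ (the exceptional set $\{\xi\}$ has zero Lebesgue measure), so Fatou's lemma bounds the desired integral by $\liminf_{n}|1-\rho_n\overline{a}\xi|^{-(r+t-s-2)}=|1-\overline{a}\xi|^{-(r+t-s-2)}$; here we use that $|1-\overline{a}\xi|\geq 1-|a|>0$ so that continuity of the right-hand side at $\rho=1$ applies.

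\textbf{Where the real work sits.} The substance of the argument is really in the interior estimate from \cite{OrtegaFabrega}, whose proof splits $\D=E_1\cup E_2$ with $E_1=\{\z:|1-\overline{\z}a|\geq \tfrac{1}{2}|1-\overline{a}b|\}$ and $E_2$ the analogous set for $b$; their union covers $\D$ thanks to the elementary bound $|1-\overline{a}b|\leq |1-\overline{\z}a|+|1-\overline{\z}b|$ (a consequence of $|x-y|\leq |1-\overline{x}y|$ for $x,y\in\overline{\D}$). On each region one extracts the full target factor $|1-\overline{a}b|^{-(r+t-s-2)}$ by replacing the dominant kernel factor, and then controls the remaining single-kernel integral via Forelli--Rudin; the strict inequalities $r-s<2$ and $t-s<2$ are precisely what keeps the residual integrals uniformly bounded, while $r+t-s>2$ ensures the power extracted is positive. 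The only truly delicate point is the book-keeping of exponents near the critical regime where the residual exponents sum to $s+2$; I would expect this to be the main obstacle if one attempted a fully self-contained proof, but it is already carried out in \cite[Lemma~2.5]{OrtegaFabrega} and for our purposes may be quoted directly.
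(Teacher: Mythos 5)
Your argument is correct and coincides with the paper's own proof: both quote the interior estimate from \cite[Lemma~2.5]{OrtegaFabrega} and pass to the boundary point $\xi\in\T$ via Fatou's lemma applied to $b=\rho\xi$ with $\rho\to1^-$, using that $|1-\overline{a}\xi|>0$ to evaluate the limit on the right-hand side. The additional sketch of the interior proof is not needed, since that lemma is simply cited.
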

\begin{proof}
By \cite[Lemma 2.5]{OrtegaFabrega} the above inequality is true whenever $a,\xi\in \D$. Therefore,
\begin{align*}
\int_{\D}\frac{(1-|\z|)^s}{|1-\overline{\z}a|^r |1-\overline{\z}\xi|^t}\,dA(\z) & = \int_{\D} \liminf_{\r\to 1^-}\frac{(1-|\z|)^s}{|1-\overline{\z}a|^r |1-\overline{\z}\r\xi|^t}\,dA(\z) 
\\ & \le \liminf_{\r\to 1^-}  \int_{\D} \frac{(1-|\z|)^s}{|1-\overline{\z}a|^r |1-\overline{\z}\textcolor{red}{\r}\xi|^t}\,dA(\z) 
\\ & \lesssim \liminf_{\r\to 1^-}\frac{1}{ |1-\overline{a}\r\xi|^{t+r-s-2}}=\frac{1}{ |1-\overline{a}\xi|^{t+r-s-2}},   \quad a\in \D, \xi\in \T.
\end{align*}
This finishes the proof.
\end{proof}

Let $\H(\overline{\D})$ the class of functions $f$  satisfying that there is  $\rho=\rho(f)>1$ such that $f$ is analytic in $D(0, \rho)$ and let be
 $dA_\alpha(z)=(\alpha+1)(1-|z|^2)^\alpha$, $\alpha>-1$.

		\begin{Prf}{\em{Theorem~\ref{th: norma equivalente <}.}}
Firstly we will assume that  $ f \in\H(\overline{\D})$ .  So, by Corollary~ \ref{prop: expresión derivada fraccionaria}
			$$
			D^{\mu}(f)(z) = \sum_{j=0}^{n-1} \frac{\widehat{f}(j)}{\mu_{2j+1}}z^j + 4^n z^n R^{W^n,V_{\mu^+,n}}(f^{(n)})(z),\quad z\in\D.
			$$
		Then, 	if we write $g=D^\mu(f)$ 
			$$
			R^{V_{\mu^+,n},W^n}(g)(z) = \sum_{j=0}^{n-1}\frac{(V_{\mu^+,n})_{2j+1}}{(W^n)_{2j+1}\mu_{2j+1}}\widehat{f}(j)z^j + 4^n z^n f^{(n)}(z), \quad z\in \D.
			$$
Next, bearing in mind Lemma~\ref{le:a} and Lemma~\ref{le:b}
\begin{equation*}\begin{split}
& \nm{f}_{H^p}^p  \asymp \sum_{j=0}^{n-1}\abs{f^{(j)}(0)}^p + \int_{\T} \left (\int_{\Gamma(\z)} |z|^{2n}\abs{f^{(n)}(z)}^2 \,dA_{2n-2}(z) \right )^{\frac{p}{2}} \abs{d\z}
\\ & \asymp \sum_{j=0}^{n-1}\abs{f^{(j)}(0)}^p + \int_{\T} \left (\int_{\Gamma(\z)} \left| R^{V_{\mu^+,n},W^n}(g)(z) - \sum_{j=0}^{n-1}\frac{(V_{\mu^+,n})_{2j+1}}{(W^n)_{2j+1}\mu_{2j+1}}\widehat{f}(j)z^j \right| ^2 
\,dA_{2n-2}(z) \right )^{\frac{p}{2}} \abs{d\z}
\\ & \lesssim \sum_{j=0}^{n-1}\abs{f^{(j)}(0)}^p + \int_{\T} \left (\int_{\Gamma(\z)} \left| R^{V_{\mu^+,n},W^n}(g)(z)\right| ^2\,dA_{2n-2}(z)\right )^{\frac{p}{2}} \abs{d\z}
\\ & \lesssim  \int_{\T} \left (\int_{\Gamma(\z)} \left| R^{V_{\mu^+,n},W^n}(g)(z)\right| ^2 \,dA_{2n-2}(z)\right )^{\frac{p}{2}} \abs{d\z}.
\end{split}\end{equation*}

			Then, we just need to prove 
\begin{equation}\label{eq:j6}
			\int_\T \left [ \int_{\Gamma(\xi)}\abs{R^{V_{\mu^+,n},W^n}(g)(z)}^2\,dA_{2n-2}(z) \right ]^{\frac{p}{2}}\abs{d\xi} \lesssim \int_\T \left [ \int_{\Gamma(\xi)}\abs{g(z)}^2\frac{\mug(z)^2}{(1-\abs{z})^2}dA(z) \right ]^{\frac{p}{2}}\abs{d\xi}.
		\end{equation}

Since $f\in \H(\overline{\D})$, then 	$g\in \H(\overline{\D})$ and therefore by Theorem~\ref{th:perala} and Lemma~\ref{lema pesos tec}(iii)
			\begin{equation*}
				\begin{split}
					\abs{R^{V_{\mu^+,n},W^n}(g)(z)}^2 
					&\lesssim \left (\int_\D \abs{g(\z)}\abs{B^{W^n}_z(\z)}\mug(\z)(1-\abs{\z})^{n}dA(\z) \right )^2,\quad z\in\D.
				\end{split}
			\end{equation*}
			Choose $n,N\in\N$ such that $2n+\frac{3}{2}>N>\frac{1}{2}\max\left\{1,\frac{2}{p}\right\}$ and  take 
\begin{equation}\label{eq:parameters}
 \ep\in \left(-1, 2n+2- \max\left\{1,\frac{2}{p}\right\}\right)\bigcap \left(2n+2-2N, 4n+2-2N \right).
\end{equation}
			Then, by Hölder's inequality and \cite[Lemma 5]{PelRatBMO} 
			\begin{equation}\label{eq: th1 desig hölder}
				\begin{split}
					\abs{R^{V_{\mu^+,n},W^n}(g)(z)}^2 & \lesssim 
\left (\int_\D \frac{\abs{g(\z)}^2}{\abs{1-\overline{z}\z}^{2N}}\mug(\z)^2 dA_{2n-\e}(\z) \right) \left (\int_{\D}\abs{(1-\overline{z}\z)^NB^{W^n}_z(\z)}^2 dA_\e(\z)\right ) \\
					&\lesssim \left (\int_\D \frac{\abs{g(\z)}^2}{\abs{1-\overline{z}\z}^{2N}}\mug(\z)^2 dA_{2n-\e}(\z) \right)\left (1+ \int_0^{\abs{z}}\frac{dr}{\widehat{W^{n}}(r)^2(1-r)^{1-2N-\e}} \right ),\quad z\in \D.
				\end{split}
			\end{equation}
			Now, bearing in mind that $4n+2-2N-\e$ is positive and Lemma~\ref{lema pesos tec} (i) it follows that
			\begin{equation}\label{eq: th1 desig Wn}
			1+ \int_0^{\abs{z}}\frac{dr}{\widehat{W^{n}}(r)^2(1-r)^{1-2N-\e}} \lesssim \frac{1}{(1-\abs{z})^{4n+2-2N-\e}},\quad z\in \D.
			\end{equation}
			Then by joining \eqref{eq: th1 desig hölder} and \eqref{eq: th1 desig Wn} 
			\begin{equation}\label{eq:A2}
			\abs{R^{V_{\mu^+,n},W^n}(g)(z)}^2 \lesssim \left (\int_\D \frac{\abs{g(\z)}^2}{\abs{1-\overline{z}\z}^{2N}}\mug(\z)^2dA_{2n-\e}(\z) \right )\frac{1}{(1-\abs{z})^{4n+2-2N-\e}},\quad z\in\D.
			\end{equation}
		 	Now take $\lambda>2n+2-\e$.  Then by  \cite[Lemma 4]{arsenovic}, Fubini's theorem,  Lemma~\ref{le:OAcase1} and \eqref{eq:parameters} we obtain that
		 	\begin{equation*}
		 		\begin{split}
		 			&\int_\T \left [ \int_{\Gamma(\xi)}\abs{R^{V_{\mu^+,n},W^n}(g)(z)}^2(1-\abs{z})^{2n-2}dA(z) \right ]^{\frac{p}{2}}\abs{d\xi} \\
		 			&\lesssim\int_\T \left [ \int_{\Gamma(\xi)}\left (\int_\D \frac{\abs{g(\z)}^2}{\abs{1-\overline{z}\z}^{2N}}\mug(\z)^2dA_{2n-\e}(\z) \right )\frac{1}{(1-\abs{z})^{2n+4-2N-\e}}dA(z) \right ]^{\frac{p}{2}}\abs{d\xi} \\
		 			&\lesssim 
\int_\T \left [ \int_\D \left (\frac{1-\abs{z}}{\abs{1-\overline{z}\xi}} \right)^\lambda \left (\int_\D \frac{\abs{g(\z)}^2}{\abs{1-\overline{z}\z}^{2N}}\mug(\z)^2dA_{2n-\e}(\z) \right )\frac{1}{(1-\abs{z})^{2n+4-2N-\e}}dA(z) \right ]^{\frac{p}{2}}\abs{d\xi} \\
		 			&= 
\int_\T \left [ \int_\D \abs{g(\z)}^2 \mug(\z)^2 (1-\abs{\z})^{2n-\e} \left (\int_\D \frac{(1-\abs{z})^{\lambda-2n-4+2N+\e}}{\abs{1-\overline{z}\xi}^\lambda\abs{1-\overline{z}\z}^{2N}} \, dA(z)\right )dA(\z)\right ]^{\frac{p}{2}}\abs{d\xi} \\
		 			&\lesssim 
\int_\T \left [\int_\D \left (\frac{1-\abs{\z}}{\abs{1-\overline{\z}\xi}} \right)^{2n+2-\e} \abs{g(\z)}^2 \frac{\mug(\z)^2}{(1-\abs{\z})^{2}}   dA(\z)\right ]^{\frac{p}{2}}\abs{d\xi}\\ 
		 			&\lesssim \int_\T \left [\int_{\Gamma(\xi)} \abs{g(\z)}^2 \frac{\mug(\z)^2}{(1-\abs{\z})^{2}}   dA(\z)\right ]^{\frac{p}{2}}\abs{d\xi},
		 		\end{split}
		 	\end{equation*}
where in the last inequality we have used that $\max\{1,\frac{2}{p}\}<2n+2-\e$. This proves \eqref{eq:j6} and finishes the proof for $f\in \H(\overline{\D})$.
		 	
		 	Next we will prove \eqref{eq: th1 enunciado} for any analytic function in $\D$.
Without loss of generality we may  assume that $\om(z)=\frac{\mug(z)^2}{1-\abs{z}}$ is a weight, otherwise the right-hand side equals infinity for any $f\in\H(\D)$, $f\neq 0$. 
(In fact, this is clear for $p=2$. And consequently, by H\"older's inequality it is also true  for $p>2$. For $0<p<2$,  bearing in mind \cite[Proposition 1.5]{AguiMasPelRat} and Minkowski's inequality we get
$$AT^p_2(\om)\subset AL^p_2(\om)\subset A^{p,2}_\om=\{f\in \H(\D): \int_0^1 M_p^2(r,f) \om(r)\,dr<\infty\}, $$
and therefore the assertion follows).

Consequently, let $f\in\H(\D)$ such that the right-hand side is finite , that is $D^\mu(f)\in AT^{p}_{2}(\om)$. For $f\in\H(\D)$ and $0\leq r <1$ we denote by $f_r(z)=f(rz)$, $z\in\D$. Then bearing in mind that $f_r\in \H( \overline{\D})$,  and \cite[Theorem 1.2 (i)]{AguiMasPelRat} 
		 	\begin{equation*}
		 		\begin{split}
		 			\nm{f}_{H^p}^p &= \lim_{r\to1^-} M_p^p(r,f) = \lim_{r\to1^-} \nm{f_r}^p_{H^p} \lesssim \lim_{r\to1^-}\nm{D^\mu(f_r)}_{AT^p_2(\om)}^p = \lim_{r\to1^-} \nm{(D^\mu(f))_r}_{AT^p_2(\om)}^p \\
		 			&\lesssim \nm{D^\mu(f)}_{AT^p_2(\om)} =  \int_\T \left [ \int_{\Gamma(\xi)} \abs{D^\mu(f)(z)}^2 \left (\frac{\mug(z)}{1-\abs{z}}\right )^2 dA(z) \right ]^{\frac{p}{2}}\abs{d\xi},
		 		\end{split}
		 	\end{equation*}
		 	concluding the proof.
		\end{Prf}

	\begin{theorem}\label{th: norma equivalente >}
		Let $\mu$ be a radial weight and  $0<p<\infty$. Then, the inequality
		\begin{equation}\label{eq: th2 enunciado}
		 \int_\T \left [ \int_{\Gamma(\xi)} \abs{D^\mu(f)(z)}^2 \left (\frac{\mug(z)}{1-\abs{z}}\right )^2 dA(z) \right ]^{\frac{p}{2}}\abs{d\xi}\lesssim \nm{f}_{H^p}^p,\quad f\in \H(\D),
		\end{equation}
	holds	if and only if $\mu\in \DDD$.
	\end{theorem}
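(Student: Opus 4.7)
The plan is to prove the two implications separately. For the sufficiency ``$\mu\in\DDD\Rightarrow$ \eqref{eq: th2 enunciado}'', I would essentially run the chain of estimates from the proof of Theorem~\ref{th: norma equivalente <} in reverse. Starting from the representation
\begin{equation*}
D^{\mu}(f)(z) = \sum_{j=0}^{n-1}\frac{\widehat{f}(j)}{\mu_{2j+1}}z^j + 4^n z^n R^{W^n, V_{\mu_+,n}}(f^{(n)})(z),\quad z\in\D,
\end{equation*}
provided by Corollary~\ref{prop: expresión derivada fraccionaria} for a suitable $n\in\N$, the polynomial part contributes only finitely many terms trivially controlled by $\|f\|_{H^p}$. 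For the remainder, I would apply Theorem~\ref{th:perala} to write $R^{W^n, V_{\mu_+,n}}(f^{(n)})(z)$ as an integral against the kernel $B^{V_{\mu_+,n}}_z$, and then Cauchy--Schwarz split it with parameters $N,\ep$ analogous to \eqref{eq:parameters}. The kernel-norm estimate \cite[Lemma~5]{PelRatBMO} applied to $V_{\mu_+,n}$, combined with the lower bound $\widehat{V_{\mu_+,n}}(r)\gtrsim\widehat{\mu}(r)(1-r)^{n+1}$ from Lemma~\ref{lema pesos tec}(v) (which is where $\mu\in\DD$ enters decisively) and with Lemma~\ref{lemma: caract dcheck}(iii) applied with $\gamma=2$ (which is where $\mu\in\Dd$ is used to make the auxiliary integral in $r$ summable), would produce a pointwise bound of the form
\begin{equation*}
\abs{R^{W^n,V_{\mu_+,n}}(f^{(n)})(z)}^2 \lesssim \frac{1}{\widehat{\mu}(z)^2 (1-|z|)^{2n+2-2N-\e}} \int_\D \frac{\abs{f^{(n)}(\z)}^2 (1-|\z|)^{2n-\e}}{|1-\overline{z}\z|^{2N}}\,dA(\z).
\end{equation*}
After multiplying by $\widehat{\mu}(z)^2/(1-|z|)^2$ and integrating over $\Gamma(\xi)$, Fubini combined with Lemma~\ref{le:OAcase1} (a Forelli--Rudin estimate) would collapse the $z$-integral, leaving exactly the classical Calderón area functional $\int_{\Gamma(\xi)}|f^{(n)}(\z)|^2(1-|\z|)^{2n-2}dA(\z)$; raising to the power $p/2$, integrating over $\T$, and invoking Lemma~\ref{le:a} then identifies the resulting quantity with $\|f\|_{H^p}^p$. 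The passage from $f\in\H(\overline{\D})$ to general $f\in\H(\D)$ would be carried out by the standard dilation argument $f_r(z)=f(rz)$ as in the closing lines of the proof of Theorem~\ref{th: norma equivalente <}.

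For the necessity ``$\eqref{eq: th2 enunciado}\Rightarrow\mu\in\DDD$'', I would test the inequality on specific families of functions. Already in the case $p=2$, Parseval and the orthogonality of the monomials in $D^\mu(H^2)$ reduce \eqref{eq: th2 enunciado} to the diagonal comparison
\begin{equation*}
\int_0^1 r^{2n+1}\frac{\widehat{\mu}(r)^2}{1-r}\,dr \lesssim \mu_{2n+1}^2,\quad n\in\N\cup\{0\}.
\end{equation*}
Splitting the integral over $[0,1-1/n]$ and $[1-1/n,1]$ and combining with the universally valid lower bound $\mu_{2n+1}\gtrsim\widehat{\mu}(1-1/(2n+1))$, one extracts $\mu\in\Dd$ from the control of the tail near $1$ (via Lemma~\ref{lemma: caract dcheck}) and $\mu\in\DD$ from the control of the central portion (via Lemma~\ref{lemma: caract Dgorro}). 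For general $0<p<\infty$, the argument can be handled either by interpolation or more directly by a test against reproducing-kernel-like symbols $f_a(z)=(1-|a|^2)^{1/p}/(1-\overline{a}z)^{2/p}$, which satisfy $\|f_a\|_{H^p}\asymp 1$ uniformly in $a\in\D$ and for which the tent-integral on a Carleson square $S(a)$ can be computed explicitly.

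The main obstacle is the sufficiency direction, specifically the delicate choice of $n,N,\e$ so that simultaneously (i) the Hölder exponents fall in the admissible range of \cite[Lemma~5]{PelRatBMO}, (ii) the integral $\int_0^{|z|}dr/(\widehat{\mu}(r)^2(1-r)^\eta)$ lies in the scope of Lemma~\ref{lemma: caract dcheck}(iii), and (iii) the final $z$-integration satisfies the hypothesis of Lemma~\ref{le:OAcase1}; and crucially, after all the substitutions, the powers of $\widehat{\mu}$ and $(1-|\z|)$ cancel to yield precisely the Calderón weight $(1-|\z|)^{2n-2}$ on the right-hand side. In the necessity direction, the subtlety lies in disentangling the $\DD$ and $\Dd$ conditions from the single sequence inequality above.
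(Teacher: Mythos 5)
Your sufficiency argument is, structurally, exactly the paper's: Corollary~\ref{prop: expresión derivada fraccionaria}, H\"older against the modified kernel norms of \cite[Lemma~5]{PelRatBMO}, Lemma~\ref{lema pesos tec}(v) to bring in $\mug$, Lemma~\ref{lemma: caract dcheck}(iii) with $\gamma=2$ to absorb the auxiliary $r$-integral, then Fubini and Lemma~\ref{le:OAcase1} to collapse to the Calder\'on functional, and dilations for general $f$. One bookkeeping slip: your displayed pointwise bound carries the prefactor $(1-|z|)^{-(2n+2-2N-\e)}$, whereas the estimate that Lemma~\ref{lema pesos tec}(v) and Lemma~\ref{lemma: caract dcheck}(iii) actually deliver is $\mug(z)^{-2}(1-|z|)^{-(2-2N+\e)}$ (no $2n$ in the exponent; cf.\ \eqref{eq: th2 desig Vgorro}--\eqref{eq:A3}). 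With your extra $(1-|z|)^{-2n}$ the Forelli--Rudin collapse produces the weight $(1-\abs{\z})^{-2}$ rather than $(1-\abs{\z})^{2n-2}$, so the final identification with $\|f\|_{H^p}^p$ would fail; the fix is just to restate the bound correctly.

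The necessity direction has genuine gaps. First, there is no need to restrict to $p=2$: testing with $f(z)=z^n$ works for every $0<p<\infty$, because $D^\mu(z^n)=z^n/\mu_{2n+1}$ and the inner tent integral $\int_{\Gamma(\xi)}\abs{z}^{2n}\mug(z)^2(1-\abs{z})^{-2}dA(z)$ is independent of $\xi$, so the outer $p/2$-power is harmless and one gets $\int_0^1 r^{2n+1}\mug(r)^2(1-r)^{-1}dr\lesssim\mu_{2n+1}^2$ directly for all $p$. Your two proposed workarounds for general $p$ do not work: one cannot ``interpolate'' a necessary condition, and the kernel test $f_a(z)=(1-|a|^2)^{1/p}(1-\overline{a}z)^{-2/p}$ is unusable here because $D^\mu(f_a)$ cannot be computed or bounded below on $S(a)$ without already knowing $\mu\in\DD$. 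Second, and more seriously, ``splitting the integral over $[0,1-1/n]$ and $[1-1/n,1]$'' does not yield $\mu\in\DD$: the diagonal inequality only bounds the integral \emph{above} by $\mu_{2n+1}^2$, and the universally valid bound $\mug(1-\tfrac1{2n+1})\lesssim\mu_{2n+1}$ points in the useless direction. What is required is the reverse moment comparison $\mu_{\frac{3x}{4}+1}\lesssim\mu_x$, which the paper extracts by writing $\mug(r)r^{3x/4}$ as a product and applying Cauchy--Schwarz against $(1-r)r^{x/2}$ before invoking \eqref{eq: th2 necesidad continuo}; this trick is absent from your sketch and is not recoverable from a mere dyadic splitting. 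Note also that the order matters: $\Dd$ is obtained only \emph{after} $\DD$, since the upper bound $\int_0^1 r^x\mug(r)^2(1-r)^{-1}dr\lesssim\mug(1-\tfrac1x)^2$ needed to close the $\Dd$ argument uses Lemma~\ref{lemma: caract Dgorro}(ii).
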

	\begin{proof}
	Assume  that $\mu\in\DDD$. Firstly we will prove \eqref{eq: th2 enunciado} for  $f\in\H(\overline{\D})$. Then,  for each  $n\in\N$, by Corollary~\ref{prop: expresión derivada fraccionaria}  and Lemma~\ref{lema pesos tec} (i) 
		\begin{equation}
			\begin{split} \label{eq: th2 expresion desig fracc}
				\abs{D^\mu(f)(z)}^2 &\leq 4n^2\sum_{j=0}^{n-1} \frac{\abs{f^{(j)}(0)}^2}{(j!)^2\mu_{2j+1}^2} + 4^{2n+1}\abs{z}^{2n} \left (\int_\D \abs{f^{(n)}(\z)}\abs{B^{V_{\mu^+,n}}_z(\z)}W^n(\z)dA(\z) \right )^2 
\\ & \lesssim \sum_{j=0}^{n-1}\abs{f^{(j)}(0)}^2 + \left (\int_\D \abs{f^{(n)}(\z)} \abs{B^{V_{\mu^+,n}}_z(\z)} (1-\abs{\z})^{2n}dA(\z) \right )^2,
\quad z\in\D.
			\end{split}
		\end{equation}
Take $n\in\N\setminus\{1\}$ such that  $n>\frac{1}{p}-\frac{1}{2}>\frac{1}{p}-\frac{3}{2}$ and $N\in \N$ such that $$\frac{1}{p}<n+\frac{1}{2}<N<n+\frac{3}{2}+\frac{1}{2}\min\{2\b,1-\eta\},$$ 
where $\b>0$ comes from Lemma~\ref{lemma: caract dcheck} (ii) and  $\eta<1$ from Lemma~\ref{lemma: caract dcheck} (iii) with $\gamma=2$.
Next, choose $\e$ such that
\begin{equation}\label{eq:parameters2}
\e\in \left(\max\left\{\frac{2}{p}-2,0\right\}, 2n+1 \right)\bigcap \left(2N -2 -\min\{2\b,1-\eta\}, 2N-2 \right).
\end{equation}
		Since $V_{\mu^+,n}\in\DD$, by \cite[Lemma 5]{PelRatBMO} and Hölder's inequality
		\begin{equation}\label{eq: th2 eq tras holder}
			\begin{split}
				&\left (\int_\D \abs{f^{(n)}(\z)} \abs{B^{V_{\mu^+,n}}_z(\z)} (1-\abs{\z})^{2n}dA(\z) \right )^2
\\
				&\lesssim \left (\int_\D \frac{\abs{f^{(n)}(\z)}^2}{\abs{1-\overline{z}\z}^{2N}}dA_{2n+\e}(\z) \right ) \left ( \int_\D \abs{(1-\overline{z}\z)^N B^{V_{\mu^+,n}}_z(\z)}^2(1-\abs{\z})^{2n-\e}dA(\z)\right ) \\
				&\lesssim \left (\int_\D \frac{\abs{f^{(n)}(\z)}^2}{\abs{1-\overline{z}\z}^{2N}}dA_{2n+\e}(\z) \right ) \left (1+\int_0^{\abs{z}} \frac{dr}{\widehat{V_{\mu^+,n}}(r)^2 (1-r)^{1-2n-2N+\e}} \right )\quad z\in\D.
			\end{split}
		\end{equation} 
		Let us prove that 
		\begin{equation}\label{eq: th2 desig Vgorro}
			1+\int_0^{\abs{z}} \frac{dr}{\widehat{V_{\mu^+,n}}(r)^2 (1-r)^{1-2n-2N+\e}} \lesssim \frac{1}{\mug(z)^2(1-\abs{z})^{2-2N+\e}},\quad z\in \D.
		\end{equation}
Bearing in mind Lemma~\ref{lemma: caract dcheck} (ii) and \eqref{eq:parameters2} 
		(in particular $2+\e-2N+2\b>0$)  it is clear that \eqref{eq: th2 desig Vgorro} holds if $\abs{z}\leq \frac{1}{2}$.
		Next, if $\abs{z}\geq \frac{1}{2}$ 
by Lemma~\ref{lema pesos tec} (v), \eqref{eq:parameters2}   and Lemma~\ref{lemma: caract dcheck} (iii)
		\begin{equation*}
			\begin{split}
				&\int_{\frac{1}{2}}^{\abs{z}} \frac{dr}{\widehat{V_{\mu^+,n}}(r)^2 (1-r)^{1-2n-2N+\e}} \lesssim \int_{\frac{1}{2}}^{\abs{z}} \frac{dr}{\mug(r)^2(1-r)^{3-2N+\e}} \\
				&\leq  \int_{0}^{\abs{z}} \frac{dr}{\mug(r)^2(1-r)^{3-2N+\e}} \leq \frac{1}{(1-\abs{z})^{3-\eta-2N+\e}}\int_{0}^{\abs{z}} \frac{dr}{\mug(r)^2(1-r)^\eta} \\
				&\lesssim \frac{1}{\mug(z)^2(1-\abs{z})^{2-2N+\e}}, 
			\end{split}
		\end{equation*}
which implies that \eqref{eq: th2 desig Vgorro} holds.
		Then by joining \eqref{eq: th2 expresion desig fracc}, \eqref{eq: th2 eq tras holder} and \eqref{eq: th2 desig Vgorro} we obtain that
		\begin{equation}\label{eq:A3}
		\abs{D^\mu(f)(z)}^2 \lesssim \sum_{j=0}^{n-1}\abs{f^{(j)}(0)}^2 + \left (\int_\D \frac{\abs{f^{(n)}(\z)}^2}{\abs{1-\overline{z}\z}^{2N}}dA_{2n+\e}(\z) \right )\frac{1}{\mug(z)^2(1-\abs{z})^{2-2N+\e}},\quad z\in\D.
		\end{equation}
Now, by Lemma~\ref{lemma: caract dcheck} (ii) 
\begin{equation*}
			\begin{split}
				\int_\T \left [ \int_{\Gamma(\xi)} \left (\frac{\mug(z)}{1-\abs{z}}\right )^2 dA(z) \right ]^{\frac{p}{2}}\abs{d\xi}
\asymp \int_0^1 \frac{(\mug(s))^2}{1-s}\,ds\lesssim \int_0^1 \frac{1}{(1-s)^{1-2\beta}}\,ds<\infty.
\end{split}\end{equation*}
		Therefore
		\begin{equation*}
			\begin{split}
				&\int_\T \left [ \int_{\Gamma(\xi)} \abs{D^\mu(f)(z)}^2 \left (\frac{\mug(z)}{1-\abs{z}}\right )^2 dA(z) \right ]^{\frac{p}{2}}\abs{d\xi} \\ &\lesssim \left ( \sum_{j=0}^{n-1}\abs{f^{(j)}(0)}^p\right )\int_\T \left [ \int_{\Gamma(\xi)} \left (\frac{\mug(z)}{1-\abs{z}}\right )^2 dA(z) \right ]^{\frac{p}{2}}\abs{d\xi} \\
				&+ \int_\T \left [ \int_{\Gamma(\xi)} \left (\int_\D \frac{\abs{f^{(n)}(\z)}^2}{\abs{1-\overline{z}\z}^{2N}}dA_{2n+\e}(\z) \right )\frac{1}{\mug(z)^2(1-\abs{z})^{2-2N+\e}} \left (\frac{\mug(z)}{1-\abs{z}}\right )^2 dA(z) \right ]^{\frac{p}{2}}\abs{d\xi} \\
				&\lesssim \sum_{j=0}^{n-1}\abs{f^{(j)}(0)}^p + \int_\T \left [ \int_{\Gamma(\xi)} \left (\int_\D \frac{\abs{f^{(n)}(\z)}^2}{\abs{1-\overline{z}\z}^{2N}}dA_{2n+\e}(\z) \right )\frac{1}{(1-\abs{z})^{4-2N+\e}} dA(z) \right ]^{\frac{p}{2}}\abs{d\xi}.
			\end{split}
		\end{equation*}
		So we just need to show that the second term in the sum can be dominated by the second term in \eqref{eq: norma calderon}. \\
		Take $2+\e<\lambda$.  Then by  \cite[Lemma 4]{arsenovic},  Fubini’s theorem,
		  Lemma~\ref{le:OAcase1}  and \eqref{eq:parameters2} we obtain that
		\begin{equation*}
			\begin{split}
			&\int_\T \left [ \int_{\Gamma(\xi)} \left (\int_\D \frac{\abs{f^{(n)}(\z)}^2}{\abs{1-\overline{z}\z}^{2N}}dA_{2n+\e}(\z) \right )\frac{1}{(1-\abs{z})^{4-2N+\e}} dA(z) \right ]^{\frac{p}{2}}\abs{d\xi} \\
			&\lesssim \int_\T \left [\int_\D \left (\frac{(1-\abs{z})}{\abs{1-\overline{z}\xi}}\right )^\lambda\left (\int_\D \frac{\abs{f^{(n)}(\z)}^2}{\abs{1-\overline{z}\z}^{2N}}dA_{2n+\e}(\z) \right )\frac{1}{(1-\abs{z})^{4-2N+\e}} dA(z) \right ]^{\frac{p}{2}}\abs{d\xi} \\
			&= \int_\T \left [\int_\D \abs{f^{(n)}(\z)}^2(1-\abs{\z})^{2n+\e} \left (\int_\D \frac{(1-\abs{z})^{\lambda+2N-4-\e}}{\abs{1-\overline{z}\xi}^\lambda\abs{1-\overline{z}\z}^{2N}}dA(z) \right) dA(\z) \right  ]^{\frac{p}{2}}\abs{d\xi} \\
			&\lesssim 
\int_\T \left [\int_\D \left (\frac{1-\abs{\z}}{\abs{1-\overline{\z\xi}}}\right )^{2+\e} \abs{f^{(n)}(\z)}^2(1-\abs{\z})^{2n-2}  dA(\z) \right  ]^{\frac{p}{2}}\abs{d\xi} \\
			&\lesssim \int_\T \left [\int_{\Gamma(\xi)}  \abs{f^{(n)}(\z)}^2(1-\abs{\z})^{2n-2}  dA(\z) \right  ]^{\frac{p}{2}}\abs{d\xi},
			\end{split}
		\end{equation*}
		concluding the proof for $f\in\H(\overline{\D})$. 

		Now, if  $f\in H^p$ by  \cite[Theorem 1.2 (ii)]{AguiMasPelRat}
		\begin{equation*}
			\begin{split}
		  &\int_\T \left [ \int_{\Gamma(\xi)} \abs{D^\mu(f)(z)}^2 \left (\frac{\mug(z)}{1-\abs{z}}\right )^2 dA(z) \right ]^{\frac{p}{2}}\abs{d\xi}
		  = \nm{D^\mu(f)}_{AT^p_2(\om)}^p\\ &\lesssim \limsup_{r\to1^-} \nm{(D^\mu(f))_r}_{AT^p_2(\om)}^p 
		  =\limsup_{r\to1^-}\nm{D^\mu(f_r)}_{AT^p_2(\om)}^p \lesssim \limsup_{r\to1^-}\nm{f_r}_{H^p}^p = \nm{f}_{H^p}^p.
			\end{split}
		\end{equation*}
This proves \eqref{eq: th2 enunciado}.

	Reciprocally assume  that \eqref{eq: th2 enunciado} holds and let us prove that $\mu\in\DDD$. 
	Testing  \eqref{eq: th2 enunciado} with $f(z)=z^n$,  $n\in\N\cup\{0\}$, we obtain the inequality
	\begin{equation*}
		\begin{split}
	\left [\int_0^1 \frac{\mug(r)^2}{1-r} r^{2n+1}  dr \right]^{\frac{p}{2}} & \asymp \int_\T\left[\int_{\Gamma(\xi)}\abs{z}^{2n}\frac{\mug(z)^2}{(1-\abs{z})^2}dA(z) \right ]^{\frac{p}{2}}\abs{d\xi}
\\ &  \lesssim \mu_{2n+1}^p,\quad n\in\N\cup\{0\},
\end{split}
	\end{equation*}
	that is
	\begin{equation}\label{eq: th2 necesidad discreto}
		\int_0^1 \frac{\mug(r)^2}{1-r}r^{2n+1}dr \lesssim \mu_{2n+1}^2,\quad n\in \N\cup\{0\}.
	\end{equation}
If $x\geq1$ take $n\in \N$ such that $2n+1\leq x < 2n+3$.  Then,
	$$
	\int_0^1 \frac{\mug(r)^2}{1-r}r^xdr \leq \int_0^1 \frac{\mug(r)^2}{1-r}r^{2n+1}dr \lesssim \mu_{2n+1}^2 \lesssim \mu_{2n+3}^2\leq \mu_x^2,
	$$
where in the second to last inequality we have used that  there exists a constant $C(\mu)>0$ such that $\mu_x\leq C(\mu)\mu_{x+2}$ for every $x>0$. 
	Therefore,
	\begin{equation}\label{eq: th2 necesidad continuo}
		\int_0^1 \frac{\mug(r)^2}{1-r}r^xdr \lesssim \mu_x^2,\quad x\geq 1.
	\end{equation}
	So,  an integration by parts,  Hölder's inequality and \eqref{eq: th2 necesidad continuo} imply that
	\begin{equation*}
		\begin{split}
			\mu_{\frac{3x}{4}+1}^2 &\asymp x^2 \left (\int_0^1 \mug(r)r^{\frac{3x}{4}}dr\right )^2 
			\leq x^2 \left (\int_0^1\frac{\mug(r)^2}{1-r}r^xdr \right ) \left (\int_{0}^1 (1-r)r^{\frac{x}{2}}dr \right ) 
\\ & \asymp \int_0^1 \frac{\mug(r)^2}{1-r}r^xdr \lesssim \mu_x^2, \quad x\geq1.
		\end{split}
	\end{equation*}
	Consequently,
	$
	\mu_x\lesssim \mu_{\frac{4x}{3}},\,x\geq,  \frac{7}{4}.
	$
	Next, three iterations of this inequality yield
	$$
	\mu_x \lesssim \mu_{\frac{4x}{3}} \lesssim \mu_{\frac{16x}{9}} \lesssim \mu_{\frac{64x}{27}} \leq \mu_{2x},\quad x\geq \frac{7}{4},
	$$
	so by Lemma~\ref{lemma: caract Dgorro} (iii) $\mu\in\DD$. \\
	Let us prove  that $\mu\in \Dd$. If $K>1$, 
	\begin{equation*}\begin{split}
			\int_0^1 r^{x} \frac{\widehat{\mu}(r)^2}{1-r}\,dr  \ge \int_{1-\frac{1}{x}}^{1-\frac{1}{Kx}} r^{x} \frac{\widehat{\mu}(r)^2}{1-r}\,dr
			 \ge C_1  \widehat{\mu}\left(1-\frac{1}{Kx}\right)^2\log K, \quad x\ge 2,
	\end{split}\end{equation*}
	where $C_1=\inf_{x\ge 2}\left( 1-\frac{1}{x}\right)^x>0$.
	The above inequality, together with \eqref{eq: th2 necesidad continuo}, the fact that $\mu\in \DD$ and Lemma~\ref{lemma: caract Dgorro}, implies that there is $C_2=C_2(\mu)>0$ such that
	\begin{equation*}\begin{split}
			C_1  \widehat{\mu}\left(1-\frac{1}{Kx}\right)^2\log K \le  \int_0^1 r^{x} \frac{\widehat{\mu}(r)^2}{1-r}\,dr 
			\le C_2 \widehat{\mu}\left(1-\frac{1}{x} \right) ^2, \quad x\ge 2.
	\end{split}\end{equation*}
	That is,
	$$\widehat{\mu}(r)\ge \left( \frac{C_1\log K}{C_2}\right)^{1/2}\widehat{\mu}\left(1 - \frac{1-r}{K}\right), \quad \frac{1}{2}\le r<1.$$
	So, choosing $\log K>\frac{C_2}{C_1}$, it follows that $\mu\in \Dd$, and consequently $\mu\in\DDD$.
	\end{proof}
	
 As a byproduct of Fubini's theorem, Theorem~\ref{th: norma equivalente <} and Theorem~\ref{th: norma equivalente >} we obtain the following 
 Littlewood-Paley type inequalities for the Hardy space $H^2$.  
	\begin{corollary}\label{coro: H2}
		Let $\mu$ be a radial weight, then
$$
		\nm{f}_{H^2}^2 \lesssim \int_\D \abs{D^\mu(f)(z)}^2 \frac{\mug(z)^2}{1-\abs{z}}\,dA(z),\quad f\in\H(\D).
		$$
Moreover, the following conditions are equivalent:
\begin{enumerate}
\item[\rm(i)] $\mu\in\DDD$;
\item[\rm(ii)] $$
		\nm{f}_{H^2}^2 \asymp \int_\D \abs{D^\mu(f)(z)}^2 \frac{\mug(z)^2}{1-\abs{z}}\,dA(z),\quad f\in\H(\D);
		$$
\item[\rm(iii)]
		$$
		 \int_\D \abs{D^\mu(f)(z)}^2 \frac{\mug(z)^2}{1-\abs{z}}\,dA(z)\lesssim \nm{f}_{H^2}^2,\quad f\in\H(\D).
		$$
\end{enumerate}
	\end{corollary}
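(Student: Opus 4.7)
The plan is to reduce the whole corollary to a direct application of Theorem~\ref{th: norma equivalente <} and Theorem~\ref{th: norma equivalente >} at the exponent $p=2$, where the tent-type integral collapses to an ordinary area integral over $\D$ after an elementary Fubini swap. Specifically, for any nonnegative measurable function $G$ on $\D$,
\begin{equation*}
\int_\T\int_{\Gamma(\xi)}G(z)\,dA(z)\,|d\xi|
=\int_\D G(z)\,\bigl|\{\xi\in\T:z\in\Gamma(\xi)\}\bigr|\,dA(z)
\asymp\int_\D G(z)(1-|z|)\,dA(z),
\end{equation*}
because the set of $\xi\in\T$ with $|\arg z-\arg\xi|<1-|z|$ is an arc whose arclength is comparable to $1-|z|$. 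Applying this identity to $G(z)=|D^\mu(f)(z)|^2\,\mug(z)^2/(1-|z|)^2$ turns the tent quantity in both Theorem~\ref{th: norma equivalente <} and Theorem~\ref{th: norma equivalente >} (taken at $p=2$) into exactly $\int_\D|D^\mu(f)(z)|^2\,\mug(z)^2/(1-|z|)\,dA(z)$.

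With this observation, the first (unconditional) inequality is immediate: Theorem~\ref{th: norma equivalente <} at $p=2$ is valid for every radial weight $\mu$, and after the Fubini identification it reads precisely
\begin{equation*}
\nm{f}_{H^2}^2\lesssim\int_\D|D^\mu(f)(z)|^2\,\frac{\mug(z)^2}{1-|z|}\,dA(z),\qquad f\in\H(\D).
\end{equation*}
For the equivalence of (i), (ii) and (iii), assuming $\mu\in\DDD$ I would invoke Theorem~\ref{th: norma equivalente >} with $p=2$; combined with the same Fubini identity and with the previous bound it yields (ii). The implication (ii)$\Rightarrow$(iii) is trivial.

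For (iii)$\Rightarrow$(i) the Fubini identity is applied in reverse: the area inequality in (iii) is literally equivalent to the tent inequality stated as (iii) of Theorem~\ref{th: norma equivalente >} at $p=2$, and that theorem already establishes that such an inequality forces $\mu\in\DDD$ (via the test functions $f(z)=z^n$ and the asymptotic moment analysis carried out there). I do not expect any real obstacle: everything of substance, both the direct Calderón-type estimate for arbitrary radial weights and the necessity of the doubling condition, has been done in Theorems~\ref{th: norma equivalente <} and \ref{th: norma equivalente >}; the only new ingredient needed is the elementary area-vs-tent conversion above, which is why the corollary is labeled a byproduct.
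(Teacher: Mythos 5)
Your proposal is correct and is exactly the argument the paper intends: the corollary is stated as a byproduct of Fubini's theorem together with Theorems~\ref{th: norma equivalente <} and \ref{th: norma equivalente >}, and your tent-to-area conversion (the arc $\{\xi\in\T: z\in\Gamma(\xi)\}$ has length $2(1-|z|)$, so the double integral at $p=2$ equals a constant times $\int_\D G(z)(1-|z|)\,dA(z)$) is precisely the Fubini step being alluded to. The deduction of the unconditional inequality from Theorem~\ref{th: norma equivalente <} and of the three-way equivalence from Theorem~\ref{th: norma equivalente >} matches the paper's route.
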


\section{Fractional derivative characterization of BMOA and VMOA}\label{s4}

We will use some  well-known results. The first one follows from \cite[Lemmas 2.4 and 3.3]{BlascoJarchow} (see also \cite[Lemma~3.3 p. 231]{Garnett1981}).
\begin{letterlemma}\label{lemma: carleson nucleo}
	Let $\mu$ be a positive  Borel measure on $\D$. Then
\begin{enumerate}
\item[\rm(i)] For each
  $\lambda>0$, there exists positive constants $C_1=C_1(\lambda), C_2=C_2(\lambda)>0$ such that
	$$
	C_1\sup_{a\in\D} \frac{\mu(S(a))}{1-\abs{a}} \leq \sup_{a\in\D}\int_{\D} \frac{(1-\abs{a})^\lambda}{\abs{1-\overline{a}z}^{\lambda+1}}d\mu(z) \leq C_2 \sup_{a\in\D} \frac{\mu(S(a))}{1-\abs{a}};
	$$
\item[\rm(ii)] For each
  $\lambda>0$, $ \lim_{|a|\to 1^-} \int_{\D} \frac{(1-\abs{a})^\lambda}{\abs{1-\overline{a}z}^{\lambda+1}}d\mu(z)=0
	$ if and only if $$ \lim_{|a|\to 1^-} \frac{\mu(S(a))}{1-\abs{a}}=0.
	$$
\end{enumerate}
\end{letterlemma}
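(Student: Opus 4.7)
The plan is to prove the two inequalities in (i) by standard Carleson-measure techniques, treating the lower bound by restriction and the upper bound by a dyadic annular decomposition around the point $a$; part (ii) will follow by combining the same estimates with an $\varepsilon$-splitting argument.

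For the lower bound in (i), I would simply restrict the integral to the Carleson square $S(a)$. For $z\in S(a)$ one has $1-|z|\le 1-|a|$ and $|\arg(\overline{a}z)|\lesssim 1-|a|$, hence $|1-\overline{a}z|\lesssim 1-|a|$. Consequently the integrand satisfies
$$\frac{(1-|a|)^\lambda}{|1-\overline{a}z|^{\lambda+1}}\gtrsim \frac{1}{1-|a|},\qquad z\in S(a),$$
and integrating over $S(a)$ yields $\int_\D\gtrsim \mu(S(a))/(1-|a|)$, uniformly in $a$.

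For the upper bound in (i), I would decompose $\D$ into dyadic annuli
$$E_0(a)=\{z\in\D:\,|1-\overline{a}z|\le 2(1-|a|)\},\quad E_k(a)=\{z\in\D:\,2^k(1-|a|)<|1-\overline{a}z|\le 2^{k+1}(1-|a|)\},\ k\ge 1.$$
A standard geometric observation is that each $E_k(a)$ is contained in (a bounded number of) Carleson squares of side length comparable to $2^k(1-|a|)$, so
$$\mu(E_k(a))\lesssim K\cdot 2^k(1-|a|),\qquad K:=\sup_{b\in\D}\frac{\mu(S(b))}{1-|b|},$$
whenever $2^k(1-|a|)\le 1$, and $\mu(E_k(a))\le \mu(\D)\lesssim K$ otherwise. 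Bounding the integrand by $(1-|a|)^\lambda/(2^k(1-|a|))^{\lambda+1}$ on $E_k(a)$ and summing the resulting geometric series $\sum_k 2^{-k\lambda}$ gives $\int_\D\lesssim K$, proving the upper bound with constant $C_2=C_2(\lambda)$.

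For (ii), the nontrivial direction is that $\lim_{|a|\to 1^-}\mu(S(a))/(1-|a|)=0$ implies the vanishing of the kernel integral. Given $\varepsilon>0$ choose $r_0<1$ with $\mu(S(b))/(1-|b|)<\varepsilon$ for $|b|>r_0$, and split the integral as $\int_{|z|\le r_0}+\int_{|z|>r_0}$. On the compact part the integrand is bounded by $(1-|a|)^\lambda/(1-r_0)^{\lambda+1}$, which tends to $0$ as $|a|\to 1^-$. On the remaining part I would repeat the dyadic argument above, but using only Carleson squares $S(b)$ with $|b|>r_0$; this replaces $K$ by $\varepsilon$, giving a bound $\lesssim \varepsilon$. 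Since $\varepsilon$ is arbitrary, the integral vanishes. The converse direction is immediate from the localized lower bound of (i) applied on $S(a)$.

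The main obstacle, though quite mild, is the geometric covering step in the upper bound, namely verifying that each dyadic shell $E_k(a)$ is covered by a controlled number of Carleson squares of comparable scale; once this is in place the remaining estimates are a direct summation of a geometric series, and the vanishing version is a routine $\varepsilon/2$ splitting.
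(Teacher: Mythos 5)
Your proof is correct. Note that the paper does not prove this statement at all: it is quoted as a known result from \cite[Lemmas 2.4 and 3.3]{BlascoJarchow} and \cite[Lemma 3.3, p.~231]{Garnett1981}, and your argument (restriction to $S(a)$ for the lower bound, dyadic shells $|1-\overline{a}z|\asymp 2^k(1-|a|)$ with a geometric-series summation for the upper bound, and an $\varepsilon$-splitting for the vanishing version) is essentially the classical proof given in those references. The only step deserving a line of care is the covering claim in part (ii): when $2^k(1-|a|)>1-r_0$ the shell $E_k(a)\cap\{|z|>r_0\}$ is wide and shallow, so it must be covered by roughly $2^k(1-|a|)/(1-r_0)$ squares $S(b)$ with $1-|b|\asymp 1-r_0$; since each contributes at most $\varepsilon(1-r_0)$ the total is still $\lesssim\varepsilon\,2^k(1-|a|)$ and your summation goes through unchanged.
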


The second result is a particular case ($p=1$) of 
 \cite[Theorem 5]{AulaskariNowakZhao}.
\begin{lettertheorem}\label{th: derivada n-esima bmoa}
	Let $n\in\N$. 
\begin{enumerate}
\item[\rm(i)] 
 $f\in\BMOA$ if and only if $d\mu_{f,n}(z) = \abs{f^{(n)}(z)}^2(1-\abs{z})^{2n-1}dA(z)$ is a classical Carleson measure. Moreover,
	$$
	\nm{f}_{\BMOA}^2\asymp \sum_{k=0}^{n-1} \abs{f^{(k)}(0)}^2 + \sup_{a\in\D} \frac{\int_{S(a)}\abs{f^{(n)}(z)}^2(1-\abs{z})^{2n-1}dA(z)}{1-\abs{a}};
	$$
\item[\rm(ii)] 
 $f\in\VMOA$ if and only if $d\mu_{f,n}(z) = \abs{f^{(n)}(z)}^2(1-\abs{z})^{2n-1}dA(z)$ is a vanishing Carleson measure.
\end{enumerate}
\end{lettertheorem}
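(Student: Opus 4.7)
I would prove (i) and (ii) together by induction on $n\in\N$. The base case $n=1$ is nothing but the definition of $\BMOA$ (resp.\ $\VMOA$) recalled in~\eqref{eq:normaBMOA}, since $(1-\abs{z})\asymp(1-\abs{z}^2)$ on $\D$. The inductive step reduces to proving that, for every $n\geq 1$,
$$
\sup_{a\in\D}\frac{1}{1-\abs{a}}\int_{S(a)}\abs{f^{(n+1)}(z)}^2(1-\abs{z})^{2n+1}\,dA(z)\asymp \sup_{a\in\D}\frac{1}{1-\abs{a}}\int_{S(a)}\abs{f^{(n)}(z)}^2(1-\abs{z})^{2n-1}\,dA(z)+\abs{f^{(n)}(0)}^2,
$$
together with its analogue for vanishing Carleson measures.

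\textbf{Upward direction.} The pointwise bound
$$
\abs{f^{(n+1)}(z)}^2(1-\abs{z})^{2n+1}\lesssim \int_{D(z,1-\abs{z})}\abs{f^{(n)}(u)}^2(1-\abs{u})^{2n-3}\,dA(u),\quad z\in\D,
$$
follows from Cauchy's estimate $\abs{f^{(n+1)}(z)}\lesssim(1-\abs{z})^{-1}\sup_{\abs{w-z}=(1-\abs{z})/2}\abs{f^{(n)}(w)}$ combined with the subharmonicity of $\abs{f^{(n)}}^2$ on $D(w,(1-\abs{z})/8)\subset D(z,1-\abs{z})$ and the comparability $1-\abs{u}\asymp 1-\abs{z}$ on the integration disk. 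Integrating over $S(a)$ and applying Fubini, the slice $\{z\in S(a):u\in D(z,1-\abs{z})\}$ has Lebesgue area $\lesssim(1-\abs{u})^2$ while $u$ ranges over a slight enlargement $\widetilde{S}(a)$ of $S(a)$ with $1-\abs{\widetilde a}\asymp 1-\abs{a}$. The gained factor $(1-\abs{u})^2$ converts $(1-\abs{u})^{2n-3}$ into $(1-\abs{u})^{2n-1}$, so the induction hypothesis at level $n$ yields the Carleson bound at level $n+1$. The very same local argument preserves vanishing as $\abs{a}\to 1^-$, giving (ii) in this direction.

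\textbf{Downward direction.} This step is the more delicate one. I would use an integral representation that, for some $\alpha>-1$ sufficiently large, writes $f^{(n)}(z)$ as the sum of a polynomial $P(z)$ of degree at most $n-1$, with coefficients linear in $\{f^{(k)}(0)\}_{k\leq n}$, plus a weighted integral operator applied to $f^{(n+1)}$ whose kernel is of the form $(1-\abs{\zeta}^2)^{\alpha+1}/(1-\overline{\zeta}z)^{\alpha+2}$; such a formula follows from the Bergman reproducing identity together with one $\bar\partial$-integration by parts that transfers one derivative from $f^{(n)}$ onto the weight $(1-\abs{\zeta}^2)^{\alpha+1}$. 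Applying Cauchy--Schwarz with a Schur-test weight and invoking Lemma~\ref{lemma: carleson nucleo}(i) to rewrite the Carleson condition at level $n$ as
$$
\sup_{a\in\D}\int_\D\abs{f^{(n)}(z)}^2(1-\abs{z})^{2n-1}\frac{(1-\abs{a})^\lambda}{\abs{1-\overline{a}z}^{\lambda+1}}\,dA(z)<\infty,
$$
and interchanging the order of integration reduces the latter to the corresponding test integral at level $n+1$, up to a polynomial contribution absorbed into the terms $\abs{f^{(k)}(0)}^2$. The vanishing case (ii) follows by the same argument together with Lemma~\ref{lemma: carleson nucleo}(ii). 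The main obstacle is precisely this downward step: the parameters $\alpha$, the Schur-test exponent, and $\lambda$ must be balanced so that the resulting kernel integrals produce the correct scaling $(1-\abs{z})^{2n-1}$ and $f^{(n+1)}$ sits in the resulting weighted $L^2$ space.
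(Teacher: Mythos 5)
First, a point of comparison: the paper does not prove this statement at all. It is imported verbatim as the case $p=1$ of \cite[Theorem 5]{AulaskariNowakZhao}, so your proposal is being measured against a citation, not against an in-house argument. That said, your inductive scheme is the standard and correct route to this result, and the machinery you invoke for the hard half (reproducing formulas, Cauchy--Schwarz against a kernel, Lemma~\ref{lemma: carleson nucleo}, Fubini plus Forelli--Rudin estimates as in Lemma~\ref{le:OAcase1}) is exactly the machinery the paper deploys for the analogous passage between $D^\mu(f)$ and $f^{(n)}$ in the proof of Theorem~\ref{th: caracterizacion BMOA <}; so your plan is compatible with, and could be completed by mimicking, Section~4 of the paper.

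Two concrete issues. \emph{(1) The Fubini step in the upward direction is wrong as written.} For the full disc $D(z,1-\abs{z})$ the slice $\{z:\ \abs{u-z}<1-\abs{z}\}$ always contains a fixed neighbourhood of the origin (indeed $z=0$ belongs to it for every $u\in\D$), so its area is bounded below by an absolute constant and is certainly not $\lesssim(1-\abs{u})^2$; likewise, for $a$ close to $\T$ and $u$ much closer, the slice inside $S(a)$ can have area $\asymp(1-\abs{a})^2\gg(1-\abs{u})^2$. The claim becomes true, with the same one-line proof, once you replace $D(z,1-\abs{z})$ by $D(z,c(1-\abs{z}))$ with a fixed $c<1$; and your Cauchy-plus-subharmonicity argument actually only produces discs of radius at most $\tfrac58(1-\abs{z})$, so the fix costs nothing --- but the constant $c<1$ must appear explicitly, since with $c=1$ the comparabilities $1-\abs{u}\asymp1-\abs{z}$ and $\abs{u-z}\lesssim 1-\abs{u}$ both fail. \emph{(2) The downward direction is a plan, not a proof.} You correctly identify it as the delicate half, but everything that makes it delicate --- the integrability of $f^{(n+1)}(\z)(1-\abs{\z}^2)^{\alpha+1}$ needed to even write the reproducing formula (it does follow from the level-$(n+1)$ Carleson hypothesis via Cauchy--Schwarz, but this must be said), the precise Cauchy--Schwarz splitting, and the admissible window for $\alpha$, the Schur exponent and $\lambda$ in the double kernel integral $\int_\D(1-\abs{z})^{s}\abs{1-\overline{a}z}^{-\lambda-1}\abs{1-\overline{\z}z}^{-2N}\,dA(z)$ --- is deferred. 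As it stands the hard implication (Carleson at level $n$ implies $f\in\BMOA$, and its $\VMOA$ analogue) is not established; to close it you should carry out the computation in the style of \eqref{eq: desig lambda 2} in the paper, where the same parameter bookkeeping is done in full.
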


We are ready to prove the first main result of this section.

\begin{theorem}\label{th: caracterizacion BMOA <}
	Let $\mu$ be a radial weight. Then, 
	$$
	\nm{f}_{\BMOA} \lesssim \|f\|_{\BMOA_\mu},\quad f\in\H(\D).
	$$
Moreover,  $ \VMOA_\mu \subset \VMOA$.
\end{theorem}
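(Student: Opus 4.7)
The plan is to mirror the argument used in the proof of Theorem~\ref{th: norma equivalente <} but replace the tent-space side by a classical Carleson measure estimate. The key bridge remains Corollary~\ref{prop: expresión derivada fraccionaria}: setting $g=D^\mu(f)$, for any $n\in\N$,
\[
R^{V_{\mu_+,n},W^n}(g)(z)=p_n(f)(z)+4^n z^n f^{(n)}(z),\qquad z\in\D,
\]
where $p_n(f)$ is a polynomial of degree $<n$ whose coefficients are linear functionals of $f(0),\dots,f^{(n-1)}(0)$. Combining this identity with the pointwise bound \eqref{eq:A2} already derived in the proof of Theorem~\ref{th: norma equivalente <} gives, for appropriate $n,N\in\N$ and $\varepsilon>0$,
\[
|z|^{2n}\abs{f^{(n)}(z)}^2 \lesssim |p_n(f)(z)|^2 + \left(\int_\D\frac{\abs{g(\z)}^2\mug(\z)^2}{\abs{1-\overline{z}\z}^{2N}}\,dA_{2n-\varepsilon}(\z)\right)\frac{1}{(1-\abs{z})^{4n+2-2N-\varepsilon}}.
\]

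By Theorem~\ref{th: derivada n-esima bmoa}(i) it suffices to control the Carleson norm of $|f^{(n)}(z)|^2(1-|z|)^{2n-1}\,dA(z)$, and by Lemma~\ref{lemma: carleson nucleo}(i) this reduces to bounding
\[
\sup_{a\in\D}\int_\D \frac{(1-\abs{a})^\lambda}{\abs{1-\overline{a}z}^{\lambda+1}}\,\abs{f^{(n)}(z)}^2(1-\abs{z})^{2n-1}\,dA(z)
\]
by $\|g\|_{\BMOA_\mu}^2+\sum_{j=0}^{n-1}\abs{f^{(j)}(0)}^2$. Inserting the pointwise estimate, applying Fubini to integrate first in $z$, and then applying Lemma~\ref{le:OAcase1} to the $z$-integral (with parameters $\lambda,N,\varepsilon$ tuned so that all the exponent constraints of Lemma~\ref{le:OAcase1} are met) leaves an integral of the form
\[
\sup_{a\in\D}\int_\D \frac{(1-\abs{a})^{\lambda'}}{\abs{1-\overline{a}\z}^{\lambda'+1}}\,\abs{g(\z)}^2\mug(\z)^2\,\frac{dA(\z)}{1-\abs{\z}},
\]
which by a second appeal to Lemma~\ref{lemma: carleson nucleo}(i) is dominated by $\|g\|_{\BMOA_\mu}^2$. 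The polynomial remainder $p_n(f)$ contributes only through $\abs{f^{(j)}(0)}$ for $j<n$, each of which is itself controlled by $\|f\|_{\BMOA_\mu}$ by testing on monomials and applying Cauchy's inequality on a small disc. To pass from $f\in\H(\overline{\D})$ to an arbitrary $f\in\H(\D)$, one dilates by $f_r(z)=f(rz)$, uses $D^\mu(f_r)=(D^\mu f)_r$ together with the obvious monotonicity of the Carleson-type supremum defining $\|\cdot\|_{\BMOA_\mu}$, and passes to the limit via Fatou's lemma.

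The inclusion $\VMOA_\mu\subset\VMOA$ is proved by running exactly the same chain of inequalities but invoking Lemma~\ref{lemma: carleson nucleo}(ii) and Theorem~\ref{th: derivada n-esima bmoa}(ii). Vanishing is obtained by the usual splitting of the $\z$-integration into a compact piece $\abs{\z}\le r_0$ (whose contribution is $O((1-\abs{a})^\lambda)\to 0$ as $\abs{a}\to 1^-$) and an annular piece $\abs{\z}>r_0$, whose contribution is made small uniformly in $a$ by the vanishing Carleson property of $d\nu_g$; letting $r_0\to 1^-$ at the end finishes the argument. The main obstacle, as in Theorem~\ref{th: norma equivalente <}, is the simultaneous calibration of the parameters $n,N,\varepsilon,\lambda$: one has to arrange the exponent conditions of Lemma~\ref{le:OAcase1}, the validity of the modified-kernel estimate \cite[Lemma~5]{PelRatBMO}, and the matching of the final power of $(1-\abs{\z})$ to exactly $-1$ so that the Carleson norm produced coincides with $\|g\|_{\BMOA_\mu}^2$.
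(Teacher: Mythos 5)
Your overall strategy coincides with the paper's: the same identity from Corollary~\ref{prop: expresión derivada fraccionaria} relating $z^nf^{(n)}$ to $R^{V_{\mu_+,n},W^n}(D^\mu f)$ minus a polynomial, the same pointwise bound \eqref{eq:A2}, the reduction to the kernel form of the Carleson condition via Theorem~\ref{th: derivada n-esima bmoa} and Lemma~\ref{lemma: carleson nucleo}, and the Fubini/Forelli--Rudin computation landing on $\sup_a\int_\D\frac{(1-|a|)^{\lambda'}}{|1-\overline{a}\z|^{\lambda'+1}}|g(\z)|^2\frac{\mug(\z)^2}{1-|\z|}\,dA(\z)$. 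Your handling of the polynomial remainder (Cauchy estimates on a small disc, using $\widehat f(j)=\mu_{2j+1}\widehat{D^\mu f}(j)$ and $\mug(\tfrac12)>0$) is a workable substitute for the paper's route through Corollary~\ref{coro: H2}, and the VMOA part is the standard splitting that Lemma~\ref{lemma: carleson nucleo}(ii) packages for you.

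The genuine gap is the approximation step. You invoke the pointwise bound \eqref{eq:A2}, whose derivation rests on the integral representation of Theorem~\ref{th:perala}, valid for $g=D^\mu f\in A^1_{V_{\mu_+,n}}$; to secure this you dilate and appeal to the ``obvious monotonicity of the Carleson-type supremum defining $\|\cdot\|_{\BMOA_\mu}$'' plus Fatou. That monotonicity, i.e. $\|f_r\|_{\BMOA_\mu}\lesssim\|f\|_{\BMOA_\mu}$ uniformly in $r$, is not obvious and does not follow from a pointwise comparison: after the change of variables $w=rz$ one must compare $\frac{\mug(|w|/r)^2}{1-|w|/r}$ with $\frac{\mug(|w|)^2}{1-|w|}$, and already for $\mug(t)=(1-t)^{\gamma}$ with $0<\gamma<\tfrac12$ the ratio blows up as $|w|\to r^-$; for a general radial weight (no doubling is assumed in this theorem) there is no reason for the supremum to be controlled. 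The paper sidesteps dilation entirely: from the finiteness of $\int_\D|D^\mu f|^2\frac{\mug^2}{1-|z|}\,dA$ (which follows from $\|f\|_{\BMOA_\mu}<\infty$ via Lemma~\ref{lemma: carleson nucleo}) one gets, by Cauchy--Schwarz and Lemma~\ref{lema pesos tec}(iii),
\begin{equation*}
\int_{\D}|g(z)|\,V_{\mu_+,n}(z)\,dA(z)\lesssim\left(\int_\D |D^\mu f|^2\frac{\mug(z)^2}{1-|z|}\,dA(z)\right)^{1/2}\left(\int_\D(1-|z|)^{2n+1}dA(z)\right)^{1/2}<\infty,
\end{equation*}
so Theorem~\ref{th:perala} applies directly to $g$ and no limiting argument is needed. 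Replace your dilation step by this verification and the proof closes.
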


\begin{proof}
	Let $f\in\H(\D)$ such that $d\mu_f(z)=\abs{D^\mu(f)(z)}^2\frac{\mug(z)^2}{1-\abs{z}}\,dA(z)$ is a classical Carleson measure.  
Then, by Lemma~\ref{lemma: carleson nucleo}  and Corollary~\ref{coro: H2} 
\begin{equation}\label{eq:A1}
\infty>\|f\|^2_{\BMOA_\mu}\gtrsim \int_\D \abs{D^\mu(f)(z)}^2 \frac{\mug(z)^2}{1-\abs{z}}\,dA(z)\gtrsim \|f\|^2_{H^2}.
\end{equation}
 So, by Corollary~ \ref{prop: expresión derivada fraccionaria}
$$
	4^n z^n f^{(n)}(z) = R^{V_{\mu^+,n},W^n}(g)(z) - \sum_{j=0}^{n-1}\frac{(V_{\mu^+,n})_{2j+1}}{(W^n)_{2j+1}\mu_{2j+1}}\widehat{f}(j)z^j, \quad n\in\N,
	$$
where $g=D^\mu(f)$. Therefore, by Lemma~\ref{lemma: carleson nucleo} and Theorem~\ref{th: derivada n-esima bmoa}, for each $n\in\N$ and $\lambda>0$

	\begin{equation}\label{eq: desig lambda 1}
		\begin{split}
			\nm{f}_{\BMOA}^2 
			&\asymp \sum_{k=0}^{n-1}\abs{f^{(k)}(0)}^2 + \sup_{a\in\D} \int_\D \abs{z^nf^{(n)}(z)}^2\frac{(1-\abs{z})^{2n-1}(1-\abs{a})^\lambda}{\abs{1-\overline{a}z}^{\lambda+1}}dA(z)
\\ &\lesssim   \|f\|^2_{\BMOA_\mu} + \sup_{a\in\D} \int_\D \abs{R^{V_{\mu^+,n},W^n}(g)(z)}^2\frac{(1-\abs{z})^{2n-1}(1-\abs{a})^\lambda}{\abs{1-\overline{a}z}^{\lambda+1}}dA(z) \\
			&+ \sup_{a\in\D} \int_\D \abs{\sum_{j=0}^{n-1}\frac{(V_{\mu^+,n})_{2j+1}}{(W^n)_{2j+1}\mu_{2j+1}}\widehat{f}(j)z^j}^2\frac{(1-\abs{z})^{2n-1}(1-\abs{a})^\lambda}{\abs{1-\overline{a}z}^{\lambda+1}}dA(z) \\
			&\lesssim \|f\|^2_{\BMOA_\mu} + \sup_{a\in\D} \int_\D \abs{R^{V_{\mu^+,n},W^n}(g)(z)}^2\frac{(1-\abs{z})^{2n-1}(1-\abs{a})^\lambda}{\abs{1-\overline{a}z}^{\lambda+1}}dA(z).
		\end{split}
	\end{equation}
	Then, by Lemma~\ref{lemma: carleson nucleo} and Theorem~\ref{th: derivada n-esima bmoa} we just need to prove that  
	$$
	\sup_{a\in\D} \int_\D \abs{R^{V_{\mu^+,n},W^n}(g)(z)}^2\frac{(1-\abs{z})^{2n-1}(1-\abs{a})^\lambda}{\abs{1-\overline{a}z}^{\lambda+1}}dA(z) \lesssim  \sup_{a\in\D}\int_{\D}\abs{g(z)}^2\frac{(1-\abs{a})^\lambda}{\abs{1-\overline{a}z}^{\lambda+1}}\frac{\mug(z)^2}{1-\abs{z}}dA(z),
	$$
 for some $n\in\N$ and $\lambda>0$.
	Take now $n,N\in\N$ such that $1<N<2n+\frac{3}{2}$ and 
\begin{equation}\label{eq:parametersBMOA1}
	\e\in (-1,2n+1)\cap(2n +2-2N,4n+2-2N).
	\end{equation}
Observe that by Lemma~\ref{lema pesos tec} and \eqref{eq:A1}
\begin{equation*}\begin{split}
\|g\| _{A^1_{V_{\mu^+,n}}} &\lesssim \int_{\D} |g(z)| \mug(z)(1-|z|)^n\,dA(z)
\\ &
\lesssim \left(\int_\D \abs{D^\mu(f)(z)}^2 \frac{\mug(z)^2}{1-\abs{z}}\,dA(z) \right)^{1/2}\left(\int_{\D} (1-\abs{z})^{2n+1}\,dA(z) \right)^{1/2}
\\ &\lesssim \left(\int_\D \abs{D^\mu(f)(z)}^2 \frac{\mug(z)^2}{1-\abs{z}}\,dA(z) \right)^{1/2}<\infty.
\end{split}\end{equation*}
So, we can apply Theorem~\ref{th:perala} and
	argue as in the proof of Theorem~\ref{th: norma equivalente <} to obtain \eqref{eq:A2}. That is, 
	$$
	\abs{R^{V_{\mu^+,n},W^n}(g)(z)}^2 \lesssim \left (\int_\D\frac{\abs{g(\z)}^2}{\abs{1-\overline{z}\z}^{2N}}\mug(\z)^2dA_{2n-\e} \right )\frac{1}{(1-\abs{z})^{4n+2-2N-\e}},\quad z\in\D.
	$$
	Then if $0<\lambda<2N-2n+\e-2$, by Fubini's theorem,  \cite[Lemma 2.5]{OrtegaFabrega}, \eqref{eq:parametersBMOA1} and Lemma~\ref{lemma: carleson nucleo}
	\begin{equation}\label{eq: desig lambda 2}
		\begin{split}
			&\int_\D \abs{R^{V_{\mu^+,n},W^n}(g)(z)}^2\frac{(1-\abs{z})^{2n-1}(1-\abs{a})^\lambda}{\abs{1-\overline{a}z}^{\lambda+1}}dA(z) \\
			&\lesssim \int_\D \frac{(1-\abs{z})^{2N+\e-2n-3}(1-\abs{a})^\lambda}{\abs{1-\overline{a}z}^{\lambda+1}}\left (\int_\D\frac{\abs{g(\z)}^2}{\abs{1-\overline{z}\z}^{2N}}\mug(\z)^2dA_{2n-\e}(\z) \right )dA(z) \\
			&= \int_\D \abs{g(\z)}^2 \mug(\z)^2(1-\abs{\z})^{2n-\e}(1-\abs{a})^\lambda \left (\int_\D\frac{(1-\abs{z})^{2N+\e-2n-3}}{\abs{1-\overline{a}z}^{\lambda+1}\abs{1-\overline{z}\z}^{2N}} dA(z)\right )dA(\z) \\
			&\lesssim 
\int_{\D}\abs{g(\z)}^2\frac{(1-\abs{a})^\lambda}{\abs{1-\overline{a}\z}^{\lambda+1}}\frac{\mug(\z)^2}{1-\abs{\z}}dA(\z) \lesssim \|f\|^2_{\BMOA_\mu}, \quad a\in\D.
		\end{split}
	\end{equation}
	This finishes the proof of the first part of the theorem.
\par Now assume that $f\in \VMOA_\mu$, and in particular  $f\in \BMOA_\mu$.
On the rest of the proof we keep the notation, the choice of the paremeters   and the estimates obtained in the first part of the proof.
 Therefore, 

\begin{equation}\begin{split}\label{eq:vmoa1}
& \int_\D \abs{f^{(n)}(z)}^2\frac{(1-\abs{z})^{2n-1}(1-\abs{a})^\lambda}{\abs{1-\overline{a}z}^{\lambda+1}}dA(z)
\\ & \lesssim  \int_\D \abs{R^{V_{\mu^+,n},W^n}(g)(z)}^2\frac{(1-\abs{z})^{2n-1}(1-\abs{a})^\lambda}{\abs{1-\overline{a}z}^{\lambda+1}}dA(z) 
\\ &			+  \int_\D \abs{\sum_{j=0}^{n-1}\frac{(V_{\mu^+,n})_{2j+1}}{(W^n)_{2j+1}\mu_{2j+1}}\widehat{f}(j)z^j}^2\frac{(1-\abs{z})^{2n-1}(1-\abs{a})^\lambda}{\abs{1-\overline{a}z}^{\lambda+1}}dA(z), \quad a\in \D. 
\end{split}\end{equation}
Moreover, on the one hand
\begin{equation}\begin{split}\label{eq:vmoa2}
& \lim_{|a|\to 1^-}\int_\D \abs{\sum_{j=0}^{n-1}\frac{(V_{\mu^+,n})_{2j+1}}{(W^n)_{2j+1}\mu_{2j+1}}\widehat{f}(j)z^j}^2\frac{(1-\abs{z})^{2n-1}(1-\abs{a})^\lambda}{\abs{1-\overline{a}z}^{\lambda+1}}dA(z)
\\ & \lesssim  \lim_{|a|\to 1^-} \|f\|^2_{\BMOA,\mu} \int_\D \frac{(1-\abs{z})^{2n-1}(1-\abs{a})^\lambda}{\abs{1-\overline{a}z}^{\lambda+1}}dA(z)=0.
\end{split}\end{equation}
On the other hand, bearing in mind Lemma~\ref{lemma: carleson nucleo}(ii)
\begin{equation}\begin{split}\label{eq:vmoa3}
& \lim_{|a|\to 1^-} \int_\D \abs{R^{V_{\mu^+,n},W^n}(g)(z)}^2\frac{(1-\abs{z})^{2n-1}(1-\abs{a})^\lambda}{\abs{1-\overline{a}z}^{\lambda+1}}dA(z) 
\\ & \lesssim  \lim_{|a|\to 1^-} \int_{\D}\abs{g(\z)}^2\frac{(1-\abs{a})^\lambda}{\abs{1-\overline{a}\z}^{\lambda+1}}\frac{\mug(\z)^2}{1-\abs{\z}}dA(\z) =0.
\end{split}\end{equation}
Consequently, joining \eqref{eq:vmoa1},  \eqref{eq:vmoa2} and \eqref{eq:vmoa3} it follows that
$$   \lim_{|a|\to 1^-} \int_\D \abs{f^{(n)}(z)}^2\frac{(1-\abs{z})^{2n-1}(1-\abs{a})^\lambda}{\abs{1-\overline{a}z}^{\lambda+1}}dA(z)=0.$$
So, by  Theorem~\ref{th: derivada n-esima bmoa}(ii), $f\in \VMOA$. This finishes the proof.
\end{proof}

\begin{theorem}\label{th: caracterizacion BMOA >}
	Let $\mu$ be a radial weight. Then, the following conditions are equivalent:
\begin{enumerate}
\item[\rm(i)] $\mu\in\DDD$;
\item[\rm(ii)] $\BMOA\subset  {\BMOA_\mu}$ and
	\begin{equation}\label{eq:BMOA >}
	\|f\|_{\BMOA_\mu}\lesssim \nm{f}_{\BMOA},\quad f\in\H(\D);
	\end{equation}
\item[\rm(iii)] $\VMOA\subset  {\VMOA_\mu}$ and
	\begin{equation}\label{eq:VMOA >}
	\|f\|_{\BMOA_\mu}\lesssim \nm{f}_{\BMOA},\quad f\in VMOA.
	\end{equation}
\end{enumerate}
\end{theorem}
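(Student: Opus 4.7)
The plan is to establish the four implications (i)$\Rightarrow$(ii), (i)$\Rightarrow$(iii), (ii)$\Rightarrow$(i) and (iii)$\Rightarrow$(i). The workhorse is the pointwise bound \eqref{eq:A3} derived in the proof of Theorem~\ref{th: norma equivalente >}, which, under $\mu\in\DDD$ and with parameters $n,N,\e$ chosen as there, gives
$$|D^\mu(f)(z)|^2\lesssim \sum_{j=0}^{n-1}|f^{(j)}(0)|^2+\frac{1}{\mug(z)^2(1-|z|)^{2-2N+\e}}\int_\D\frac{|f^{(n)}(\z)|^2}{|1-\overline{z}\z|^{2N}}\,dA_{2n+\e}(\z).$$
The validity of this estimate relies essentially on $\mu\in\DDD$ through Lemmas~\ref{lemma: caract dcheck} and~\ref{lema pesos tec}.

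For (i)$\Rightarrow$(ii), fix $f\in\BMOA$. I would multiply the pointwise estimate by $\mug(z)^2/(1-|z|)$ and the Carleson test kernel $(1-|a|)^\lambda/|1-\overline{a}z|^{\lambda+1}$ (with $\lambda$ positive and small enough to fit the Ortega–Fabrega regime), and then integrate over $\D$. The polynomial contribution is $O(1)$ uniformly in $a$: the bound $\mug(r)\lesssim (1-r)^\b$ provided by $\mu\in\Dd$ (Lemma~\ref{lemma: caract dcheck}(ii)) yields $\int_\D \mug(z)^2/(1-|z|)\cdot (1-|a|)^\lambda/|1-\overline{a}z|^{\lambda+1}\,dA(z)\lesssim 1$. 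For the main piece, Fubini and the Ortega–Fabrega estimate \cite[Lemma~2.5]{OrtegaFabrega} (in the regime where $t>s+2$) evaluate the inner integral to $(1-|a|)^\lambda (1-|\z|)^{-(1+\e)}/|1-\overline{a}\z|^{\lambda+1}$, reducing the quantity to $\int_\D (1-|a|)^\lambda/|1-\overline{a}\z|^{\lambda+1}\,d\lambda_f(\z)$ with $d\lambda_f(\z)=|f^{(n)}(\z)|^2(1-|\z|)^{2n-1}\,dA(\z)$. By Theorem~\ref{th: derivada n-esima bmoa}(i), $\lambda_f$ is a Carleson measure of norm $\lesssim \|f\|_{\BMOA}^2$, so Lemma~\ref{lemma: carleson nucleo}(i) bounds the expression by $C\|f\|_{\BMOA}^2$, and taking $\sup_a$ finishes. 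The implication (i)$\Rightarrow$(iii) runs identically but with $\lim_{|a|\to 1^-}$ in place of $\sup_a$: for $f\in\VMOA$, Theorem~\ref{th: derivada n-esima bmoa}(ii) makes $\lambda_f$ vanishing Carleson, Lemma~\ref{lemma: carleson nucleo}(ii) forces the main piece to zero as $|a|\to 1^-$, and the polynomial contribution vanishes because $\mug(z)^2/(1-|z|)\,dA$ is itself a vanishing Carleson measure (again from $\mug\lesssim(1-r)^\b$).

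For (ii)$\Rightarrow$(i) and (iii)$\Rightarrow$(i), I would test on the monomials $f(z)=z^N$. These lie in $\VMOA\subset\BMOA$ with $\|z^N\|_{\BMOA}\le C$ independent of $N$ (via $\|\cdot\|_{\BMOA}\lesssim\|\cdot\|_{H^\infty}$), and $D^\mu(z^N)(z)=z^N/\mu_{2N+1}$. Inserting $f=z^N$ and $a=0$ (so $S(0)=\D$) into either hypothesis yields
$$\int_0^1 r^{2N+1}\,\frac{\mug(r)^2}{1-r}\,dr\lesssim\mu_{2N+1}^2,$$
which is precisely \eqref{eq: th2 necesidad discreto}. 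The subsequent steps in the proof of Theorem~\ref{th: norma equivalente >}—extending to continuous $x\ge 1$, integration by parts combined with H\"older, three iterations of the resulting doubling-moment inequality, and Lemma~\ref{lemma: caract Dgorro}—then carry over verbatim and produce $\mu\in\DD\cap\Dd=\DDD$. The main obstacle in the whole proof is the simultaneous bookkeeping of the parameters $n,N,\e,\lambda$ so that the positivity conditions in Lemma~\ref{lema pesos tec}, the Ortega–Fabrega constraints, and the range required for Lemma~\ref{lemma: carleson nucleo} all hold together; fortunately, the choices made in the proof of Theorem~\ref{th: norma equivalente >} adapt with only small modifications.
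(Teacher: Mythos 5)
Your proposal is correct and follows essentially the same route as the paper: the sufficiency direction rests on the pointwise estimate \eqref{eq:A3} from the proof of Theorem~\ref{th: norma equivalente >}, tested against the Carleson kernel via Fubini and the Ortega--F\`abrega estimate together with Theorem~\ref{th: derivada n-esima bmoa} and Lemma~\ref{lemma: carleson nucleo} (with $\sup_a$ replaced by $\lim_{|a|\to1^-}$ for the VMOA case), and the necessity direction tests on monomials to recover \eqref{eq: th2 necesidad discreto} and then reuses the argument of Theorem~\ref{th: norma equivalente >}. The only differences are cosmetic (e.g.\ bounding the polynomial term by $\sum_j|f^{(j)}(0)|^2$ rather than $\|f\|_{\BMOA}^2$, and making the $a=0$ test explicit).
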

\begin{proof}
	Assume that $\mu\in\DDD$ and $f\in\BMOA$. Then,  Corollary~\ref{prop: expresión derivada fraccionaria}  and the argument  in the proof of Theorem~\ref{th: norma equivalente >} imply that 
	$$
	\abs{D^\mu(f)(z)}^2 \lesssim \nm{f}_{\BMOA}^2 + \left (\int_\D \abs{f^{(n)}(\z)}\abs{B^{V_{\mu_+,n}}_z(\z)}(1-\abs{\z})^{2n}dA(\z) \right )^2,\quad n\in\N,  z\in\D.
	$$
	Take $n\in\N\setminus\{1\},N\in\N$  such that $$n+\frac{1}{2}<N<n+\frac{3}{2}+\frac{1}{2}\min\{2\b,1-\eta\},$$ where $\b>0$ comes from Lemma~\ref{lemma: caract dcheck} (ii) and  $\eta<1$ from Lemma~\ref{lemma: caract dcheck} (iii) with $\gamma=2$.
	Next, choose $\e$ such that
	\begin{equation}\label{eq:parametersBMOA2}
		\e\in \left(0, 2n+1 \right)\bigcap \left(2N -2 -\min\{2\b,1-\eta\}, 2N-2 \right).
	\end{equation}
	Then,    following  the proof of Theorem~\ref{th: norma equivalente >} we get \eqref{eq:A3}. Therefore, 
	\begin{equation}
	\abs{D^\mu(f)(z)}^2\lesssim \nm{f}_{\BMOA}^2 + \left (\int_\D \frac{\abs{f^{(n)}(\z)}^2}{\abs{1-\overline{z}\z}^{2N}}dA_{2n+\e}(\z) \right)\cdot\frac{1}{\mug(z)^2(1-\abs{z})^{2-2N+\e}},\quad z\in\D.
	\end{equation}
	 Then for any $a\in\D$ and $\lambda>0$ 
	\begin{equation*}
		\begin{split}
			&\int_\D\frac{(1-\abs{a})^\lambda}{\abs{1-\overline{a}z}^{\lambda+1}}\abs{D^\mu(f)(z)}^2\frac{\mug(z)^2}{1-\abs{z}}dA(z) 
\lesssim \nm{f}_{\BMOA}^2 \int_\D \frac{(1-\abs{a})^\lambda}{\abs{1-\overline{a}z}^{\lambda+1}}\frac{\mug(z)^2}{1-\abs{z}}dA(z) \\
			&+ \int_\D \frac{(1-\abs{a})^\lambda}{\abs{1-\overline{a}z}^{\lambda+1}(1-\abs{z})^{3-2N+\e}}\left (\int_\D \frac{\abs{f^{(n)}(\z)}^2}{\abs{1-\overline{z}\z}^{2N}}dA_{2n+\e}(\z) \right)dA(z) = I(f,a) + II(f,a).
		\end{split}
	\end{equation*}
	By Lemma~\ref{lemma: caract dcheck} (ii) 
	\begin{equation*}
			\int_\D \frac{\mug(z)^2}{1-\abs{z}}\frac{(1-\abs{a})^\lambda}{\abs{1-\overline{a}{z}}^{\lambda+1}}dA(z) \lesssim  \int_0^1 \frac{\mug(s)^2}{1-s}\frac{(1-\overline{a})^\lambda}{(1-\abs{a}s)^\lambda}ds \leq  \int_0^1 \frac{\mug(s)^2}{1-s}ds <\infty,
	\end{equation*}
	so $\sup_{a\in \D}I(f,a)\lesssim \nm{f}_{\BMOA}^2$. 
	Now take $0<\lambda<2N-2-\e$. Then by Fubini's Theorem, \cite[Lemma 2.5]{OrtegaFabrega}, \eqref{eq:parametersBMOA2}, Lemma~\ref{lemma: carleson nucleo} and Theorem~\ref{th: derivada n-esima bmoa}
	\begin{equation*}
		\begin{split}
			II(f,a) &= (1-\abs{a})^\lambda\int_\D \abs{f^{(n)}(\z)}^2(1-\abs{\z})^{2n+\e}\left (\int_\D \frac{(1-\abs{z})^{2N-3-\e}}{\abs{1-\overline{z}\z}^{2N}\abs{1-\overline{a}z}^{\lambda+1}}dA(z) \right)dA(\z) \\
			&\asymp 
 \int_\D \abs{f^{(n)}(\z)}^2 (1-\abs{\z})^{2n-1}\frac{(1-\abs{a})^{\lambda}}{\abs{1-\overline{a}\z}^{\lambda+1}}dA(\z) \lesssim \nm{f}_{\BMOA}^2.
		\end{split}
	\end{equation*}
	Then, by Lemma~\ref{lemma: carleson nucleo}(i), statement (ii) holds.

Now assume that  $\mu\in\DDD$ and $f\in\VMOA$. Then arguing as in the proof of (i)$\Rightarrow$(ii),  keeping the notation and the choice of the paremeters, it follows that
\begin{equation*}
		\begin{split}
			\int_\D\frac{(1-\abs{a})^\lambda}{\abs{1-\overline{a}z}^{\lambda+1}}\abs{D^\mu(f)(z)}^2\frac{\mug(z)^2}{1-\abs{z}}dA(z) \lesssim  I(f,a) + II(f,a), \quad a\in \D. 
\end{split}
	\end{equation*}
Now, since $\frac{\mug(z)^2}{1-\abs{z}}$ is radial weight,   $\frac{\mug(z)^2}{1-\abs{z}}\, dA(z)$ is a vanishing Carleson measure. So bearing in mind Lemma~\ref{lemma: carleson nucleo}(ii), 
$  \lim_{|a|\to 1^-}I(f,a)=0.$
Moreover,  by Lemma~\ref{th: derivada n-esima bmoa}(ii)
$$ \lim_{|a|\to 1^-}II(f,a)\lesssim  \lim_{|a|\to 1^-} \int_\D \abs{f^{(n)}(\z)}^2 (1-\abs{\z})^{2n-1}\frac{(1-\abs{a})^{\lambda}}{\abs{1-\overline{a}\z}^{\lambda+1}}dA(\z) = 0.$$
So, $ \lim_{|a|\to 1^-} \int_\D\frac{(1-\abs{a})^\lambda}{\abs{1-\overline{a}z}^{\lambda+1}}\abs{D^\mu(f)(z)}^2\frac{\mug(z)^2}{1-\abs{z}}dA(z)=0$, and  by Lemma~\ref{lemma: carleson nucleo}(ii), statement (iii) holds.

Reciprocally, assume  that \eqref{eq:BMOA >} holds and let us prove that $\mu\in\DDD$.
	By chosing  $f(z)=z^n$ in  \eqref{eq:BMOA >} we get
	 the inequality
	$$
	\int_\D \frac{\abs{z^{2n}}}{\mu_{2n+1}^2}\frac{\mug(z)^2}{1-\abs{z}}\,dA(z) \lesssim \nm{z^n}_{\BMOA}^2= 1,\quad n\in\N .
	$$
	Therefore, we obtain the inequality \eqref{eq: th2 necesidad discreto} which implies $\mu\in\DDD$ by the proof of Theorem~\ref{th: norma equivalente >}. Consequently (i)$\Leftrightarrow$(ii). 
Finally, since the monomials belong to $\VMOA$,  the same proof shows that (iii)$\Rightarrow$(i). This finishes the proof.
\end{proof}

\section{Proofs of Theorem~\ref{th:Vmubounded} and Theorem~\ref{th:Vmucompact}.}\label{s5}
Let $I$ be  the identity operator and for $0<p,q<\infty$ and a positive Borel measure $\nu$.  

The following result was proved in \cite[Theorem~9]{PRS}, see also \cite{Cohn}. 
\begin{lettertheorem}\label{th:cohn}
Let $0<p,q<\infty$ and $\nu$ be a positive Borel measure on $\D$. Then,
\begin{enumerate}
\item[\rm(i)]
 $I: H^p\to  T_q^p(\nu)$ is bounded 
if and only if $\nu$ is a classical Carleson measure. 
Moreover, $$\|I\|^q_{H^p\to T_q^p(\nu) }\asymp \sup_{a \in \D} \frac{\nu(S(a))}{1-|a|}.$$
\item[\rm(ii)]
 $I: H^p\to  T_q^p(\nu)$ is compact 
if and only if $\nu$ is a vanishing Carleson measure. 
\end{enumerate}
\end{lettertheorem}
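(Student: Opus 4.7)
I would handle (i) and (ii) in parallel, obtaining each via a combination of test-function arguments for necessity and Lusin-area control for sufficiency. Throughout, write
$$S_q(f)(\xi)=\left(\int_{\Gamma(\xi)}|f(z)|^q \nu(z)\frac{dA(z)}{1-|z|}\right)^{1/q}$$
for the Lusin-area function, so that $\nm{f}_{T_q^p(\nu)}=\nm{S_q(f)}_{L^p(\T)}$. For the necessity in (i), I would test against the standard family
$$f_a(z)=\left(\frac{1-|a|^2}{(1-\overline{a}z)^2}\right)^{1/p},\qquad a\in\D,$$
which satisfies $\nm{f_a}_{H^p}\asymp 1$. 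Since $|f_a(z)|\asymp (1-|a|)^{-1/p}$ and $1-|z|\asymp 1-|a|$ uniformly on the Carleson square $S(a)$, and since $S(a)\subset \Gamma(\xi)$ for every $\xi$ lying in an arc $E_a\subset\T$ of length $\asymp 1-|a|$ around $a/|a|$, a direct estimate gives
$$\nm{f_a}_{T_q^p(\nu)}^p\gtrsim |E_a|\cdot \left(\frac{\nu(S(a))}{(1-|a|)^{1+q/p}}\right)^{p/q}\asymp \left(\frac{\nu(S(a))}{1-|a|}\right)^{p/q},$$
which forces $\sup_{a\in\D}\nu(S(a))/(1-|a|)\lesssim \nm{I}_{H^p\to T_q^p(\nu)}^q$.

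\medskip

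\noindent For sufficiency in (i), I would compare $S_q(f)$ to the nontangential maximal function $N(f)(\xi)=\sup_{z\in\Gamma(\xi)}|f(z)|$, which satisfies $\nm{N(f)}_{L^p(\T)}\asymp \nm{f}_{H^p}$. The core ingredient is a good-$\lambda$/stopping-time argument built on two observations: first, for the open level set $O_\lambda=\{\xi\in\T:N(f)(\xi)>\lambda\}$, one has $\{z\in\D:|f(z)|>\lambda\}\subset T(O_\lambda)$, where $T(E)=\D\setminus\bigcup_{\xi\in\T\setminus E}\Gamma(\xi)$ is the tent over $E$; second, a Whitney decomposition of $O_\lambda$ into arcs shows that the Carleson hypothesis is equivalent to $\nu(T(E))\lesssim |E|$ for every open $E\subset\T$. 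Combining these, one controls the super-level sets of $S_q(f)$ by those of $N(f)$, and then the distribution function formula yields $\nm{S_q(f)}_{L^p(\T)}\lesssim \nm{N(f)}_{L^p(\T)}$. The delicate point here is the case $q\neq p$, for which the naive Fubini reduction to the classical Carleson embedding $H^p\hookrightarrow L^p(\nu)$ is unavailable, so the level-set analysis is essential; this is the main obstacle in the proof.

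\medskip

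\noindent For (ii), necessity follows directly from that of (i): the test functions $f_a$ tend weakly to $0$ in $H^p$ as $|a|\to 1^-$, so compactness of $I$ forces $\nm{If_a}_{T_q^p(\nu)}\to 0$, and the lower bound above then yields $\nu(S(a))/(1-|a|)\to 0$. For sufficiency I would split $\nu=\nu|_{\overline{D(0,r)}}+\nu|_{\D\setminus\overline{D(0,r)}}$: the first summand induces a compact operator since the unit ball of $H^p$ is relatively compact in the topology of uniform convergence on $\overline{D(0,r)}$, while the second has operator norm $\lesssim \sup_{|a|>r}\bigl(\nu(S(a))/(1-|a|)\bigr)^{1/q}$ by the quantitative version of (i), and this tends to $0$ as $r\to 1^-$ by the vanishing Carleson hypothesis; hence $I$ is a norm limit of compact operators.
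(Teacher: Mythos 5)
The paper does not actually prove this statement: it is quoted as Theorem~F from \cite[Theorem~9]{PRS} (see also \cite{Cohn}), so there is no in-paper proof to compare with. Your overall architecture (test functions for necessity; nontangential maximal function, tents over level sets, Whitney decomposition and distribution functions for sufficiency; truncation of $\nu$ for compactness) is the same as in those sources.

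There is, however, a genuine error in the necessity argument. The containment $S(a)\subset\Gamma(\xi)$ for every $\xi$ in an arc $E_a$ of length $\asymp 1-|a|$ is false: a point $z\in S(a)$ with $1-|z|$ very small lies in $\Gamma(\xi)$ only for $\xi$ in the arc $I(z)$ of length $2(1-|z|)$ about $z/|z|$, which is far shorter than $E_a$. The standard repair is to integrate, $\int_{E_a}S_q(f_a)(\xi)^q|d\xi|=\int|f_a(z)|^q\frac{|I(z)\cap E_a|}{1-|z|}d\nu(z)\gtrsim(1-|a|)^{-q/p}\nu(S'(a))$ for a narrowed square $S'(a)$ comparable to $S(a)$ by finite covering, and then apply Jensen with exponent $p/q$ --- but this recovers your displayed lower bound only when $p\ge q$. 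When $p<q$ Jensen goes the wrong way and a single test function only yields the local condition $\sup_b\nu(D(b,r))/(1-|b|)<\infty$, which is strictly weaker than the Carleson condition (e.g.\ $d\nu(z)=dA(z)/(1-|z|)$ satisfies it but is not Carleson). Concretely, if $\nu=M\delta_{z_0}$ with $z_0\in S(a)$ and $1-|z_0|=\e(1-|a|)$, then $\nm{f_a}_{T_q^p(\nu)}^p\asymp\e^{1-p/q}\left(\nu(S(a))/(1-|a|)\right)^{p/q}\to0$ as $\e\to0$ when $p<q$, so $f_a$ does not detect $\nu(S(a))/(1-|a|)$; the necessity for $p<q$ requires testing at the points actually carrying the mass and summing over a dyadic decomposition of $S(a)$, as in \cite{PRS}. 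Two smaller points: in the sufficiency, the assertion that super-level sets of $S_q(f)$ are controlled by those of $N(f)$ is precisely where all the work lies --- the correct statement is a weak-type bound of the form $|\{S_qf>\lambda\}|\le|O_\lambda|+C\lambda^{-q}\int_0^\lambda t^{q-1}|O_t|\,dt$, obtained from $\int_{\T\setminus O_\lambda}S_qf^q\le\int_{\D\setminus T(O_\lambda)}|f|^qd\nu$ together with $\nu(\{|f|>t\}\setminus T(O_\lambda))\le\nu(T(O_t))\lesssim|O_t|$, and the ensuing Fubini only closes for $p<q$, the case $p\ge q$ needing a genuine good-$\lambda$ refinement. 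In (ii), ``weak convergence'' is not the right notion for $p\le1$ (use boundedness in $H^p$ plus local uniform convergence to $0$), and the bound for the tail measure $\nu|_{\{|z|>r\}}$ is not literally $\sup_{|a|>r}\nu(S(a))/(1-|a|)$ but follows from it by a short covering argument; with these fixes part (ii) is correct.
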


\begin{Prf}{\em{Theorem~\ref{th:Vmubounded}}.}
By Theorem~\ref{th: caracterizacion BMOAmain} (ii)$\Leftrightarrow$(iii) and 	 $$\|g\|_{\BMOA,\mu} \asymp \nm{g}_{\BMOA},\quad g\in\H(\D).$$  
Next, by Theorem~\ref{th:main} $V_{\mu,g}$ is bounded on $H^p$ if and only if $I: H^p\to   T_2^p(\nu_g)$ is bounded where $d\nu_g(z)=\abs{D^\mu(g)(z)}^2 \frac{\mug(z)^2}{1-\abs{z}}\,dA(z)$, and
$\nm{V_{\mu,g}}_{H^p\to H^p} \asymp \|I\|_{H^p\to T_2^p(\nu_g) }.$ This together with Theorem~\ref{th:cohn}(i) finishes the proof.
\end{Prf}
\medskip 

\begin{Prf}{\em{Theorem~\ref{th:Vmucompact}}.}
By Theorem~\ref{th: caracterizacion BMOAmain} (ii)$\Leftrightarrow$(iii).
Next, by Theorem~\ref{th:main} $V_{\mu,g}$ is compact on $H^p$ if and only if $I: H^p\to   T_2^p(\nu_g)$ is compact where $d\nu_g(z)=\abs{D^\mu(g)(z)}^2 \frac{\mug(z)^2}{1-\abs{z}}\,dA(z)$.
This together with Theorem~\ref{th:cohn}(ii) finishes the proof.
\end{Prf}

\section{Fractional Volterra-type operator acting on classical  Bergman space.}\label{s6}
In order to prove the main results of the next sections we recall some elementary facts from the hyperbolic geometry of the unit disc. Let 
$\rho(z,w) 
=\left|\frac{z-w}{1-\overline{z}w}\right|,\, z,w\in\D,$ be 
 the  pseudohyperbolic metric of $\D$, and
$$\beta(z,w)=\frac{1}{2}\log\frac{1+\rho(z,w)}{1-\rho(z,w)},\qquad z,w\in\D,$$
 the hyperbolic metric of $\D$. We denote
 $\displaystyle{D(z,r)= \{w\in\D:  \beta(z,w)<r\}}$,   $z\in \D$, $r>0$. It is  well known, see \cite[Proposition 4.5]{Zhu}, that for any $z\in \D$ and $r>0$, there exist constants $C_1,C_2,C_3$, depending on $r$, such that
\begin{align}
	\frac{1}{C_1}|1-\overline{w}a|&\leq |1-\overline{w}z|\leq C_1|1-\overline{w}a|\,,\qquad w\in \D,\,a\in D(z,r),\nonumber\\
	\frac{1}{C_2}(1-|w|^2)&\leq 1-|z|^2\leq C_2(1-|w|^2)\,,\qquad w\in D(z,r),\label{geometric hyperbolic estimates}\\
	C_3|D(z,r)|&\leq (1-|z|^2)^2\leq C_3|D(z,r)|,\nonumber
\end{align}
where $|D(z,r)|$ denotes the Lebesgue area of the set $D(z,r)$. The estimates of the form \eqref{geometric hyperbolic estimates} will be used repeatedly in the the proofs of the next results and
will be called hyperbolic estimates. The next result can be found in
\cite[Theorem 7.4 and 7.7]{Zhu}.

\begin{lettertheorem}\label{th: carleson standard}
	Let $0<p<\infty$, $\a>-1$ and $\mu$ be a positive Borel measure on $\D$. Then,
	\begin{enumerate}
		\item[\rm(i)]
		$I: A^p_\a\to  L^p(\mu)$ is bounded 
		if and only if for all (some) $0<r<1$
		$$
		\sup_{a\in\D} \frac{\mu\left (D(a,r) \right )}{(1-\abs{a})^{2+\a}} <\infty.
		$$
		Moreover, $$\|I\|^p_{A^p_\a\to L^p(\mu) }\asymp \sup_{a\in\D} \frac{\mu\left (D(a,r) \right )}{(1-\abs{a})^{2+\a}};$$
		\item[\rm(ii)]
		$I: A^p_\a\to  L^p(\mu)$ is compact 
		if and only if for all (some) $0<r<1$
		$$
		\lim_{\abs{a}\to 1^-} \frac{\mu\left (D(a,r) \right )}{(1-\abs{a})^{2+\a}}=0.
		$$ 
	\end{enumerate}
\end{lettertheorem}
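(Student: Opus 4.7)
The plan is to prove both directions using standard weighted Bergman space machinery: the sub-mean value property for $|f|^p$, a covering of $\D$ by pseudohyperbolic disks of a fixed radius, and a family of reproducing-kernel type test functions whose $A^p_\a$ norms are controlled from above and below.

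For part~(i), the sufficient direction proceeds as follows. Fix $r>0$ and choose a separated sequence $\{a_k\}\subset\D$ such that $\D=\bigcup_k D(a_k,r)$ and each $z\in\D$ belongs to at most $N=N(r)$ of the dilated disks $D(a_k,2r)$. For $w\in D(a_k,r)$, the subharmonicity of $|f|^p$ together with the hyperbolic estimates in \eqref{geometric hyperbolic estimates} yields
$$
|f(w)|^p\lesssim\frac{1}{(1-|a_k|)^{2+\a}}\int_{D(a_k,2r)}|f(z)|^p\,dA_\a(z).
$$
Bounding $\int_{D(a_k,r)}|f|^p\,d\mu\le\mu(D(a_k,r))\sup_{D(a_k,r)}|f|^p$ and summing over $k$, the estimate
$$
\int_\D|f|^p\,d\mu\lesssim\Bigl(\sup_{a\in\D}\frac{\mu(D(a,r))}{(1-|a|)^{2+\a}}\Bigr)\nm{f}^p_{A^p_\a}
$$
follows from the uniform finite overlap $N$. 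For the necessary direction, I would test with
$f_a(z)=(1-|a|^2)^{b-(2+\a)/p}/(1-\overline{a}z)^b$ for $b$ large enough that $\nm{f_a}_{A^p_\a}\asymp 1$ uniformly in $a\in\D$ (a classical consequence of the integral estimates of Forelli--Rudin type). Hyperbolic estimates give $|f_a(z)|\gtrsim(1-|a|)^{-(2+\a)/p}$ on $D(a,r)$, so
$$
\frac{\mu(D(a,r))}{(1-|a|)^{2+\a}}\lesssim\nm{f_a}^p_{L^p(\mu)}\lesssim\nm{I}^p_{A^p_\a\to L^p(\mu)},
$$
yielding both the necessary condition and the lower bound on the operator norm.

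For part~(ii), I would use the standard criterion that $I$ is compact if and only if $\nm{f_n}_{L^p(\mu)}\to 0$ for every bounded sequence $\{f_n\}\subset A^p_\a$ converging to $0$ uniformly on compact subsets of $\D$. Given $\e>0$, pick $\rho\in(0,1)$ such that $\mu(D(a,r))/(1-|a|)^{2+\a}<\e$ for $|a|\ge\rho$, and split $\int_\D|f_n|^p\,d\mu$ over $D(0,\rho)$ and its complement. The first piece tends to $0$ by uniform convergence on the compact set $\overline{D(0,\rho)}$, while the second is bounded by $C\e\nm{f_n}^p_{A^p_\a}\lesssim\e$ via the argument of part~(i) restricted to a covering of $\D\setminus D(0,\rho)$ by pseudohyperbolic disks centred at points with $|a_k|\ge\rho$. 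Conversely, the family $\{f_a\}$ from part~(i) is bounded in $A^p_\a$ and satisfies $f_a\to 0$ uniformly on compact subsets as $|a|\to 1^-$, so compactness of $I$ forces $\nm{f_a}_{L^p(\mu)}\to 0$, which combined with the lower bound of part~(i) gives the vanishing condition.

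The main obstacle is really just bookkeeping: constructing the separated covering by pseudohyperbolic disks with controlled overlap, and calibrating the exponent $b$ in the test functions so that $\nm{f_a}_{A^p_\a}$ is bounded both above and below independently of $a\in\D$. Both steps are classical and the whole argument is a routine adaptation of the Hardy-space Carleson embedding theorem to the weighted Bergman setting, which is presumably why the authors quote it directly from Zhu's monograph rather than reproving it.
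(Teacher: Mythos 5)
Your proof is correct and is essentially the classical argument --- subharmonicity of $|f|^p$ over a finite-overlap lattice of pseudohyperbolic disks for sufficiency, and Forelli--Rudin test functions $f_a(z)=(1-|a|^2)^{b-(2+\a)/p}(1-\overline{a}z)^{-b}$ for necessity and for the compactness characterization --- which is precisely the proof in Zhu's monograph (Theorems 7.4 and 7.7), the source the paper cites for this statement without reproving it. No gaps to report.
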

\begin{proposition}\label{propo: bloch fraccionaria}
	Let $\mu\in\DDD$, $0<p<\infty$, $\a>-1$ and $g\in\H(\D)$. Then 
	\begin{itemize}
	\item[\rm(i)] $g\in\B^\mu$ if and only if for all (some)  $0<r<1$
	$$\sup_{a\in\D}\frac{\int_{D(a,r)}\abs{D^\mu g(z)}^p\mug(z)^p(1-\abs{z})^{\a}dA(z)}{(1-\abs{a})^{\a+2}}<\infty,$$
Moreover, $$\| g\|^p_{\B}\asymp \sup_{a\in\D}\frac{\int_{D(a,r)}\abs{D^\mu g(z)}^p\mug(z)^p(1-\abs{z})^{\a}dA(z)}{(1-\abs{a})^{\a+2}}.$$
	\item[\rm(ii)] $g\in\B_0^\mu$ if and only if for every $0<r<1$
	$$\lim_{\abs{a}\to1^-}\frac{\int_{D(a,r)}\abs{D^\mu g(z)}^p\mug(z)^p(1-\abs{z})^{\a}dA(z)}{(1-\abs{a})^{\a+2}}=0.$$
	\end{itemize}
\end{proposition}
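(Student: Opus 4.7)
The plan is to reduce both statements to a pointwise-versus-average comparison for the subharmonic function $|D^\mu g|^p$, relying on the geometric estimates \eqref{geometric hyperbolic estimates} together with the fact that, since $\mu\in\DDD\subset\Dd$, Lemma~\ref{lemma: caract dcheck}(ii) forces $\mug(z)\asymp\mug(a)$ for every $z\in D(a,r)$, with constants depending only on $r$ and $\mu$.

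For the easy direction of (i), assume $g\in\B^\mu$. Then $\mug(z)|D^\mu g(z)|\le \|g\|_{\B^\mu}$ for all $z\in\D$, so
\[
\int_{D(a,r)} |D^\mu g(z)|^p \mug(z)^p (1-|z|)^\alpha \, dA(z)
\le \|g\|_{\B^\mu}^p \int_{D(a,r)} (1-|z|)^\alpha \, dA(z)
\asymp \|g\|_{\B^\mu}^p (1-|a|)^{\alpha+2},
\]
where the last equivalence follows from the hyperbolic estimates \eqref{geometric hyperbolic estimates}. Dividing by $(1-|a|)^{\alpha+2}$ and taking the supremum over $a\in\D$ gives one half of the asserted comparison.

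For the converse direction in (i), fix $a\in\D$ and $0<r<1$. Since $D^\mu g\in\H(\D)$, $|D^\mu g|^p$ is subharmonic, so the sub-mean value property over the Euclidean disc inscribed in $D(a,r)$, combined with $|D(a,r)|\asymp (1-|a|)^2$, yields
\[
|D^\mu g(a)|^p \lesssim \frac{1}{(1-|a|)^{2}}\int_{D(a,r)} |D^\mu g(z)|^p \, dA(z).
\]
Using $(1-|z|)\asymp (1-|a|)$ and $\mug(z)\asymp\mug(a)$ on $D(a,r)$, we may multiply and divide inside the integrand by $\mug(z)^p(1-|z|)^\alpha$ and obtain
\[
\mug(a)^p|D^\mu g(a)|^p \lesssim \frac{1}{(1-|a|)^{\alpha+2}}\int_{D(a,r)} |D^\mu g(z)|^p \mug(z)^p (1-|z|)^\alpha \, dA(z).
\]
Taking the supremum over $a\in\D$ and then the $p$-th root yields $\|g\|_{\B^\mu}^p\lesssim$ (the averaged supremum). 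In particular, the averaged supremum is finite for some $r$ if and only if it is finite for all $r$, and both are comparable to $\|g\|_{\B^\mu}^p$.

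Part (ii) is obtained from the same two inequalities. If $g\in\B_0^\mu$ and $\varepsilon>0$, pick $r_0<1$ with $\mug(z)|D^\mu g(z)|<\varepsilon$ for $|z|>r_0$; for $|a|$ sufficiently close to $1$ the Euclidean projection of $D(a,r)$ lies in $\{|z|>r_0\}$, and the easy estimate above produces the bound $\varepsilon^p(1-|a|)^{\alpha+2}$, giving the vanishing of the averages. Conversely, if the averages vanish as $|a|\to 1^-$, the subharmonic pointwise bound derived above forces $\mug(a)|D^\mu g(a)|\to 0$, i.e.\ $g\in\B_0^\mu$. The only mild obstacle is the verification that $\mug(z)\asymp\mug(a)$ on hyperbolic discs, which is routine once Lemma~\ref{lemma: caract dcheck}(ii) is invoked.
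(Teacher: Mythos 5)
The paper does not actually supply a proof of this proposition---it states that it ``can be proved using standard arguments'' and omits it---and your sketch is precisely the standard argument the authors have in mind: the easy direction from the pointwise bound plus $\int_{D(a,r)}(1-|z|)^\a dA(z)\asymp(1-|a|)^{\a+2}$, and the converse from the sub-mean value property of the subharmonic function $|D^\mu g|^p$ on a Euclidean disc inscribed in $D(a,r)$, together with $1-|z|\asymp1-|a|$ and $\mug(z)\asymp\mug(a)$ on $D(a,r)$. Two small corrections. First, the key estimate $\mug(z)\asymp\mug(a)$ for $z\in D(a,r)$ is \emph{not} a consequence of Lemma~\ref{lemma: caract dcheck}(ii) (i.e.\ of $\mu\in\Dd$): that lemma bounds $\mug$ at the point farther from the boundary from below, which is trivial by monotonicity, whereas the direction you actually need ($\mug$ does not drop by more than a bounded factor when $1-|z|$ is halved) is exactly the $\DD$ condition --- compare \eqref{eq:mugconstant} in Section~8, where the paper correctly attributes this to $\mu\in\DD$. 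Since $\mu\in\DDD=\DD\cap\Dd$, the hypothesis is available and the proof goes through, but the citation should be fixed. Second, your argument establishes $\|g\|_{\B^\mu}^p\asymp\sup_{a\in\D}(\cdots)$, while the ``Moreover'' in the statement is phrased with $\|g\|_{\B}^p$; to close that last step you need the equivalence $\|g\|_{\B^\mu}\asymp\|g\|_{\B}$ for $\mu\in\DDD$, which is \cite[Theorem~3]{MorenoPelRosa} (the same result the paper invokes for (ai)$\Leftrightarrow$(aii) in Theorem~\ref{th:bergmanVmuboundedcompact}); this should be said explicitly rather than left implicit.
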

Proposition~\ref{propo: bloch fraccionaria} can be proved using standard arguments, so we omit its proof.
\begin{Prf}{\em{Theorem~\ref{th:bergmanVmuboundedcompact}}.}
The equivalences (ai)$\Leftrightarrow$(aii) and (bi)$\Leftrightarrow$(bii) follow from \cite[Theorem~3]{MorenoPelRosa}.
	First we notice that as $\mu\in\DDD$ and standard weights belongs to $\DDD$, by \cite[Theorem 1]{PelRosa}, $V_{\mu,g}$ is bounded (respect. compact) on $A^p_\a$ if and only if $I:A^p_\a\to L^p(\mu)$ is bounded (respect. compact) with $d\mu(z) = \abs{D^\mu g(z)}^p\mug(z)^p(1-\abs{z})^\a dA(z)$ and 
$\| V_{\mu,g}\|_{A^p_\alpha\to A^p_\alpha}\asymp \| I\|_{A^p_\alpha\to L^p(\mu)}.$
Therefore, the result follows from Theorem~\ref{th: carleson standard} and Proposition~\ref{propo: bloch fraccionaria}.
\end{Prf}

\section{Fractional derivative description of Besov spaces.} \label{s7}

Aiming to prove Theorem~\ref{th:schattendugintro} we will obtain fractional derivative description of classical Besov spaces which are  of its own interest. Despite the fact that the proof uses ideas previously employed to show Theorems \ref{th:main} and \ref{th: caracterizacion BMOAmain},
it contains a good number of technicalities due to the nature of $B_p$ spaces $0<p<\infty$.

\begin{lemma}\label{le:qp}
Let $\eta$ be a radial weight  and $0<p\le q<\infty$. Then,
$$ \|f\|_{A^q_{\nu(q,p)}}\lesssim \|f\|_{A^p_\eta}, \quad f \in \H(\D),$$
where $\nu(q,p)(|z|)=\eta(|z|)\left( \widehat{\eta}\left(\frac{1+|z|}{2}\right)(1-|z|)\right)^{\frac{q-p}{p}}.$
\end{lemma}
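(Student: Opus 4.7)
The plan is to deduce the inequality from the standard pointwise growth estimate for functions in weighted Bergman spaces induced by radial weights, applied to the identity $|f|^q = |f|^{q-p}\cdot |f|^p$. The exponent $(q-p)/p$ and the factor $\widehat{\eta}\bigl(\tfrac{1+|z|}{2}\bigr)(1-|z|)$ appearing in $\nu(q,p)$ are tailored precisely to that pointwise bound, so that the auxiliary factors cancel after integration against $|f|^p\eta\,dA$.

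The first step is to establish the pointwise estimate
\[
|f(z)|^p \lesssim \frac{\|f\|_{A^p_\eta}^p}{\widehat{\eta}\bigl(\tfrac{1+|z|}{2}\bigr)(1-|z|)}, \qquad f\in\H(\D),\ z\in\D.
\]
Since $r\mapsto M_p^p(r,f)$ is non-decreasing by Hardy's convexity theorem, writing the $A^p_\eta$-norm in polar coordinates and using monotonicity give $M_p^p(r,f)\lesssim \|f\|_{A^p_\eta}^p/\widehat{\eta}(r)$ for $r\ge 1/2$. Subharmonicity of $|f|^p$ on the disc $D(z,(1-|z|)/2)$, together with the fact that this disc is contained in the annulus $\{(3|z|-1)/2\le |w|\le (1+|z|)/2\}$ (which has area comparable to $1-|z|$), then yields $M_\infty^p(r,f)\lesssim M_p^p((1+r)/2,f)/(1-r)$. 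Composing the two gives the displayed bound.

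The second step is direct. Raising the pointwise estimate to the power $(q-p)/p\ge 0$ produces
\[
|f(z)|^{q-p}\lesssim \frac{\|f\|_{A^p_\eta}^{q-p}}{\bigl(\widehat{\eta}\bigl(\tfrac{1+|z|}{2}\bigr)(1-|z|)\bigr)^{\frac{q-p}{p}}}.
\]
Substituting this into the definition of the $A^q_{\nu(q,p)}$-norm and observing that the $\widehat{\eta}\bigl(\tfrac{1+|z|}{2}\bigr)(1-|z|)$ factors cancel exactly, we obtain
\[
\|f\|_{A^q_{\nu(q,p)}}^q = \int_\D |f(z)|^{q-p}|f(z)|^p \eta(z)\Bigl(\widehat{\eta}\bigl(\tfrac{1+|z|}{2}\bigr)(1-|z|)\Bigr)^{\frac{q-p}{p}} dA(z) \lesssim \|f\|_{A^p_\eta}^{q-p}\int_\D |f(z)|^p\eta(z)\,dA(z) = \|f\|_{A^p_\eta}^q,
\]
and the claim follows by taking $q$-th roots. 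There is no serious obstacle here; the only point requiring attention is to verify that the power of $\widehat{\eta}(\cdot)(1-|z|)$ built into $\nu(q,p)$ matches exactly the one produced by the pointwise bound, which it does by construction, and to check the case $p=q$ separately, where $\nu(q,p)=\eta$ and the inequality is trivial.
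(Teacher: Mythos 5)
Your proof is correct and follows essentially the same route as the paper: both arguments combine the estimate $M_p^p(r,f)\lesssim \|f\|_{A^p_\eta}^p/\widehat{\eta}(r)$ with the standard bound $M_\infty^p(r,f)\lesssim M_p^p\bigl(\tfrac{1+r}{2},f\bigr)/(1-r)$ (which the paper quotes and you derive via subharmonicity) to get the pointwise growth estimate, and then factor $|f|^q=|f|^{q-p}|f|^p$ so that the auxiliary factor in $\nu(q,p)$ cancels. No gaps.
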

\begin{proof}
Joining the inequality $M_p(r,f)\le \frac{\|f\|_{A^p_\eta}}{\widehat{\eta}(r)^\frac{1}{p}}, 0\le r<1$, with the inequality
$$M_\infty(r,f)\le C_p \frac{M_p(s,f)}{(s-r)^{ \frac{1}{p}}}, \quad 0\le r<s<1,$$
it follows that
\begin{equation*}\begin{split}
M_\infty(r,f)\le C_p \frac{M_p\left(\frac{1+r}{2},f \right)}{(1-r)^{ \frac{1}{p}}}\le C_p 
\frac{\|f\|_{A^p_\eta}}{ \left( (1-r)\widehat{\eta}\left( \frac{1+r}{2} \right)\right)^\frac{1}{p} },
\quad 0\le r<1.
\end{split}\end{equation*}
Therefore,
\begin{equation*}\begin{split}
\|f\|^q_{A^q_{\nu(q,p)}} &=\int_{\D} |f(z)|^p |f(z)|^{q-p} \eta(|z|)\left( \widehat{\eta}\left(\frac{1+|z|}{2}\right)(1-|z|)\right)^{\frac{q-p}{p}}\, dA(z)
\\ & \lesssim \|f\|^{q-p}_{A^p_{\eta}}\int_{\D} |f(z)|^p \eta(z) \, dA(z)=\|f\|^q_{A^p_\eta}, \quad f \in \H(\D).
\end{split}\end{equation*}
This finishes the proof.
\end{proof}

\begin{theorem}\label{th:Bpmuradial}
   Let  $1\le p<\infty$ and $\mu$ be  a radial weight. Then, $B_{p,\mu}\subset B_p$ and 
    $$ \| f\|^p_{B_p}\lesssim \| f\|^p_{B_{p,\mu}}, \quad f\in \H(\D). $$
Moreover, if $\mu\in \DD$ and $0<p<1$, then $B_{p,\mu}\subset B_p$ and 
  $$ \| f\|^p_{B_p}\lesssim \| f\|^p_{B_{p,\mu}}, \quad f\in \H(\D). $$
\end{theorem}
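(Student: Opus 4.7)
The plan is to mirror the strategy used in the proofs of Theorem~\ref{th: norma equivalente <} and Theorem~\ref{th: norma equivalente >}, now with the anisotropic tent-space norm replaced by the isotropic Bergman-type weight $(1-|z|^2)^{n_pp-2}\,dA(z)$. Set $n=n_p$ and $g=D^\mu(f)$. By Corollary~\ref{prop: expresión derivada fraccionaria},
\begin{equation*}
4^n z^n f^{(n)}(z)=R^{V_{\mu_+,n},W^n}(g)(z)-\sum_{j=0}^{n-1}\frac{(V_{\mu_+,n})_{2j+1}}{(W^n)_{2j+1}\mu_{2j+1}}\widehat{f}(j)z^j,
\end{equation*}
and the polynomial part is absorbed trivially in $\|f\|_{B_{p,\mu}}^p$ (the coefficients $\widehat{f}(j)$, $0\le j<n$, are controlled by the initial Taylor coefficients of $g$, which in turn are bounded by $\|g\|_{A^p((1-|z|)^{-2}\widehat{\mu}^p)}$ via a standard mean-value estimate at the origin). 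Consequently it suffices to prove
\begin{equation*}
\int_{\D}|R^{V_{\mu_+,n},W^n}(g)(z)|^p(1-|z|^2)^{np-2}\,dA(z)\lesssim\int_{\D}|g(\zeta)|^p\frac{\widehat{\mu}(\zeta)^p}{(1-|\zeta|^2)^2}\,dA(\zeta).
\end{equation*}

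For the pointwise bound on $R^{V_{\mu_+,n},W^n}(g)$, I will combine Theorem~\ref{th:perala} with Lemma~\ref{lema pesos tec}(iii) to get
\begin{equation*}
|R^{V_{\mu_+,n},W^n}(g)(z)|\lesssim\int_{\D}|g(\zeta)|\,|B^{W^n}_z(\zeta)|\,\widehat{\mu}(\zeta)(1-|\zeta|)^n\,dA(\zeta),
\end{equation*}
and then apply Cauchy--Schwarz, exactly as in the derivation of \eqref{eq:A2}, but with parameters $n, N\in\N$ and $\varepsilon>0$ adjusted to the present weight (in particular $np-2$ appears instead of $2n-2$). Using \cite[Lemma 5]{PelRatBMO} together with Lemma~\ref{lema pesos tec}(i) to evaluate the kernel integral yields
\begin{equation*}
|R^{V_{\mu_+,n},W^n}(g)(z)|^2\lesssim\frac{1}{(1-|z|)^{4n+2-2N-\varepsilon}}\int_{\D}\frac{|g(\zeta)|^2\widehat{\mu}(\zeta)^2}{|1-\overline{z}\zeta|^{2N}}(1-|\zeta|)^{2n-\varepsilon}\,dA(\zeta).
\end{equation*}

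For $p\ge 1$ I would raise the previous pointwise inequality to the power $p/2$, integrate against $(1-|z|^2)^{np-2}\,dA(z)$, apply H\"older in the $\zeta$-integral (choosing an auxiliary weight so that the powers match after integrating out $z$), and close the estimate with a Forelli--Rudin type bound analogous to Lemma~\ref{le:OAcase1} but for $z,\zeta\in\D$. The parameters $N$ and $\varepsilon$ are tuned so that the innermost $z$-integral produces exactly $(1-|\zeta|)^{-2-2n+\varepsilon}$, which cancels against the remaining $(1-|\zeta|)^{2n-\varepsilon}$ leaving the target weight $\widehat{\mu}(\zeta)^p(1-|\zeta|)^{-2}$. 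For $0<p<1$ the previous $L^{p/2}$-step breaks down, and this is \textbf{the main obstacle}. The hypothesis $\mu\in\DD$ then enters through Lemma~\ref{lema pesos tec}(iv)--(v), giving the two-sided comparison $V_{\mu_+,n}(r)\asymp\widehat{\mu}(r)(1-r)^n$ and hence sharper two-sided estimates for $\widehat{V_{\mu_+,n}}$. With this strengthened control I would either (a) apply the subharmonic submean-value property $|h(z)|^p\lesssim|D(z,r)|^{-1}\int_{D(z,r)}|h(w)|^p\,dA(w)$ to $h=R^{V_{\mu_+,n},W^n}(g)$, swap integrals by Fubini, and reduce to the case $p=1$; or (b) use an atomic / $r$-lattice decomposition of the Bergman space $A^p((1-|z|)^{-2}\widehat{\mu}^p)$ for $p<1$, bound $R^{V_{\mu_+,n},W^n}$ on each atom via the pointwise estimate above, and sum using the $p$-triangle inequality $(\sum a_k)^p\le\sum a_k^p$ valid for $0<p\le 1$.
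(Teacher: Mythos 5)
Your skeleton is the same as the paper's: reduce to the identity $4^nz^nf^{(n)}=R^{V_{\mu_+,n},W^n}(g)-(\text{polynomial})$ with $g=D^\mu(f)$, represent $R^{V_{\mu_+,n},W^n}(g)$ as an integral against $B^{W^n}_z$, estimate the kernel via \cite[Lemma~5]{PelRatBMO}, and close with Fubini and a Forelli--Rudin bound. The problem is in the exponents. Your plan for $p\ge1$ is to take the Cauchy--Schwarz-based pointwise bound for $|R^{V_{\mu_+,n},W^n}(g)(z)|^2$, raise it to the power $p/2$, and then ``apply H\"older in the $\zeta$-integral.'' This only works when $p\ge2$: for $1\le p<2$ you have $p/2<1$, and there is no H\"older (or Minkowski) inequality that lets you dominate $\bigl(\int F\,d\nu\bigr)^{p/2}$ by $\int F^{p/2}w\,d\nu$ --- Jensen gives the opposite inequality. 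The paper avoids this by performing the initial splitting with H\"older exponents $q$ and $q'$ for a \emph{fixed} $1<q<\min\{2,p\}$ (using \cite[Lemma~5]{PelRatBMO} with exponent $q'$ rather than $2$), so that the subsequent H\"older step has exponent $p/q>1$; the case $p=1$ is then handled separately by a direct $L^1$ estimate of $\int_\D|B^{W^n}_u(\zeta)|(1-|u|)^{n-2}\,dA(u)$ via \cite[Theorem~1]{PR2016/1}, with no Cauchy--Schwarz at all. As written, your argument leaves $1\le p<2$ unproved.

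For $0<p<1$ neither of your two options closes the gap. Option (a) is circular: applying the submean-value inequality to $h=R^{V_{\mu_+,n},W^n}(g)$ in the variable $z$ and then Fubini simply reproduces $\int_\D|h|^p(1-|z|)^{np-2}dA(z)$ up to constants; it does not reduce matters to $p=1$. What is actually needed is a \emph{reverse} H\"older inequality for analytic functions --- in the paper, Lemma~\ref{le:qp} applied to the analytic function $\zeta\mapsto g(\zeta)B^{W^n}_u(\zeta)$ with $\eta(s)=\mug(s)^p(1-s)^{(n+2)p-2}$ --- which pushes the $p$-th power \emph{inside} the $\zeta$-integral, i.e. $\bigl(\int|g||B^{W^n}_u|\mug(1-|\zeta|)^n dA\bigr)^p\lesssim\int|g|^p|B^{W^n}_u|^p\mug^p(1-|\zeta|)^{np+2p-2}dA$, after which Fubini and the $L^p$-kernel estimate of \cite[Theorem~1]{PR2016/1} finish the job. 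Your stated role for the hypothesis $\mu\in\DD$ is also off: it is not used through Lemma~\ref{lema pesos tec}(iv)--(v) (those lower bounds for $\widehat{V_{\mu_+,n}}$ belong to the converse direction, Theorem~\ref{th: norma equivalente >}), but rather to guarantee that $\eta\in\DDD$, so that $\widehat{\eta}\bigl(\frac{1+|\zeta|}{2}\bigr)\asymp\widehat{\eta}(|\zeta|)$ and the auxiliary weight in Lemma~\ref{le:qp} is comparable to $\mug(\zeta)(1-|\zeta|)^n$. Option (b) is not developed enough to assess, and in any case the doubling hypothesis would have to enter at exactly this point.
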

\begin{proof}
	Throughout this proof we will use the notation $d\lambda(z)=\frac{dA(z)}{(1-|z|^2)^2}$. \\
Since
$\lim_{r\to 1^-} \| f-f_r\|^p_{B_p}=\lim_{r\to 1^-} \| f-f_r\|^p_{B_{p,\mu}}=0$, so it is enough to prove the result for $f\in \H(\overline{\D})$.
If $f\in \H(\overline{\D})$,
 by Corollary~ \ref{prop: expresión derivada fraccionaria}
			$$
			D^{\mu}(f)(z) = \sum_{j=0}^{n-1} \frac{\widehat{f}(j)}{\mu_{2j+1}}z^j + 4^n z^n R^{W^n,V_{\mu^+,n}}(f^{(n)})(z),\quad z\in\D.
			$$
		Then, 	if we write $g=D^\mu(f)$ 
			$$
			R^{V_{\mu^+,n},W^n}(g)(z) = \sum_{j=0}^{n-1}\frac{(V_{\mu^+,n})_{2j+1}}{(W^n)_{2j+1}\mu_{2j+1}}\widehat{f}(j)z^j + 4^n z^n f^{(n)}(z), \quad z\in \D.
			$$

From now on we will split the proof in two cases.

{\bf{Case $\mathbf{1<p<\infty}$.}}
Bearing in mind the  Littlewood-Paley formula for standard Bergman spaces and  the subharmonicity of $|f|^q$,  
for each  $0<q<\infty$,  $n\in \N$,  and $0<r<1$
\begin{equation}\begin{split}\label{eq:bpmu1}
 &\| f\|^p_{B_p}\asymp  \sum_{j=0}^{n-1} | f^{(j)}(0)|^p+ \int_{\D} |f^{(n)}(z)|^p(1-|z|)^{np-2}\,dA(z)
\\&  \asymp  \sum_{j=0}^{n-1} | f^{(j)}(0)|^p+ \int_{\D} | z^nf^{(n)}(z)|^p(1-|z|)^{np-2}\,dA(z)
\\ & \lesssim \sum_{j=0}^{n-1} | f^{(j)}(0)|^p+  \int_{\D} \left( \int_{D(z,r)} |u^nf^{(n)}(u)|^q(1-|u|)^{qn-2}\, dA(u)\right)^{\frac{p}{q}} \,d\lambda(z)
\\ & = \sum_{j=0}^{n-1} | f^{(j)}(0)|^p
\\ &+  \int_{\D} \left( \int_{D(z,r)} \left| R^{V_{\mu^+,n},W^n}(g)(u) - \sum_{j=0}^{n-1}\frac{(V_{\mu^+,n})_{2j+1}}{(W^n)_{2j+1}\mu_{2j+1}}\widehat{f}(j)u^j  \right|^q(1-|u|)^{qn-2}\, dA(u)\right)^{\frac{p}{q}} \, d\lambda(z).
\\ & \lesssim \| f\|^p_{B_{p,\mu}} + \int_{\D} \left( \int_{D(z,r)} \left| R^{V_{\mu^+,n},W^n}(g)(u) \right|^q(1-|u|)^{qn-2}\, dA(u)\right)^{\frac{p}{q}} \, d\lambda(z).
\end{split}\end{equation}
  By Theorem~\ref{th:perala} and Lemma~\ref{lema pesos tec}(iii)
    \begin{equation*}
				\begin{split}
					\abs{R^{V_{\mu^+,n},W^n}(g)(u)}^q 
					&\lesssim \left (\int_\D \abs{g(\z)}\abs{B^{W^n}_u(\z)}\mug(\z)(1-\abs{\z})^{n}dA(\z) \right )^q,\quad u\in\D.
				\end{split}
			\end{equation*}
Now fix $1<q<\min\{2,p\}$ and $\e\in (0,1)$. Then take $n\in \N$ such that $n>1+\frac{1}{p}+ \e\left(\frac{1}{q}-\frac{1}{p}\right)$, so 
$2n+2-\frac{\e}{q}-\frac{2}{q'}>n+1+\frac{2}{q}+\frac{1-\e}{p}$ and therefore we can choose $N\in \N$ such that
         $$2n+2 -\frac{\e}{q}-\frac{2}{q'}>N>n+\frac{2}{q}+\frac{1-\e}{p}.$$ This choice of the paremeters will be employed throughout  the rest of the proof.
           
          Now, by H\"older's inequality and \cite[Lemma 5]{PelRatBMO}, for $u\in \D$
            \begin{align*}
                	\abs{R^{V_{\mu^+,n},W^n}(g)(u)}^q 
					&\leq \left (\int_\D \frac{\abs{g(\z)}^q}{\abs{1-\overline{u}\z}^{qN}}\mug(\z)^q dA_{qn-\e}(\z) \right) \left (\int_{\D}\abs{(1-\overline{u}\z)^NB^{W^n}_u(\z)}^{q'} dA_{\e q'/q}(\z)\right )^{\frac{q}{q'}} \\
					&\lesssim 
\left (\int_\D \frac{\abs{g(\z)}^q}{\abs{1-\overline{u}\z}^{qN}}\mug(\z)^q dA_{qn-\e}(\z) \right)
\left (1+ \int_0^{\abs{u}}\frac{(1-r)^{\e q'/q+1} dr}{\widehat{W^{n}}(r)^{q'}(1-r)^{q'(1-N)}} \right )^{\frac{q}{q'}}\,.
            \end{align*}
Now, since $(2n+2-N- \frac{\e}{q})q'>2$,  arguing as in the proof of \eqref{eq: th1 desig Wn} we deduce that
$$1+ \int_0^{\abs{u}}\frac{(1-r)^{\e q'/q+1} dr}{\widehat{W^{n}}(r)^{q'}(1-r)^{q'(1-N)}}\lesssim  \frac{1}{(1-|u|^2)^{(2n+2-N-\frac{\e}{q})q'-2}},\quad u\in\D.$$
Therefore,
\begin{equation*}\begin{split}
\abs{R^{V_{\mu^+,n},W^n}(g(u)}^q \lesssim \left (\int_\D \frac{\abs{g(\z)}^q}{\abs{1-\overline{u}\z}^{qN}}\mug(\z)^q dA_{qn-\e}(\z) \right)
\frac{1}{ (1-|u|^2)^{(2n+2-N-\frac{\e}{q})q-2\frac{q}{q'}}},\quad u\in\D,
\end{split}\end{equation*}
and by Fubini's theorem and the hyperbolic estimates
\begin{equation*}\begin{split}
& \int_{D(z,r)} \left| R^{V_{\mu^+,n},W^n}(g)(u) \right|^q(1-|u|)^{qn-2}\, dA(u)
\\ & \lesssim   \int_{D(z,r)} \left (\int_\D \frac{\abs{g(\z)}^q}{\abs{1-\overline{u}\z}^{qN}}\mug(\z)^q dA_{qn-\e}(\z) \right)
\frac{1}{ (1-|u|^2)^{(n-N)q-\e+4}}\, dA(u)
\\ & = \int_\D \abs{g(\z)}^q \mug(\z)^q  \int_{D(z,r)} \frac{dA(u)}{\abs{1-\overline{u}\z}^{qN} (1-|u|^2)^{(n-N)q-\e+4}}\,      dA_{qn-\e}(\z) 
\\ & \asymp  \frac{1}{ (1-|z|^2)^{(n-N)q-\e+2}} \int_\D \frac{\abs{g(\z)}^q}{\abs{1-\overline{z}\z}^{qN}} \mug(\z)^q dA_{qn-\e}(\z).
\end{split}\end{equation*}
The above estimate together with \eqref{eq:bpmu1} implies that

\begin{equation}\begin{split}\label{eq:bpmu2}
 \| f\|^p_{B_p} 
& \lesssim \| f\|^p_{B_{p,\mu}} + \int_{\D} \left(  \int_\D \frac{\abs{g(\z)}^q}{\abs{1-\overline{z}\z}^{qN}} \mug(\z)^q dA_{qn-\e}(\z)\right)^{\frac{p}{q}} \, \frac{dA(z)}{ (1-|z|^2)^{(n-N)p+(2-\e)\frac{p}{q}+2} }.
\end{split}\end{equation}
Next, by H\"older's inequality and \cite[Lemma~3.10]{Zhu}, it follows that
\begin{equation*}\begin{split}
 &\left(  \int_\D \frac{\abs{g(\z)}^q}{\abs{1-\overline{z}\z}^{qN}} \mug(\z)^q dA_{qn-\e}(\z)\right)^{\frac{p}{q}}
   \\ & \lesssim  \int_\D \frac{\abs{g(\z)}^p}{\abs{1-\overline{z}\z}^{pN-2\left( \frac{p}{q}-1\right) }} \mug(\z)^p dA_{pn-\e}(\z)          \left(  \int_\D \frac{dA_{-\e}(\z)}{\abs{1-\overline{z}\z}^{2}}  \right)^{\frac{p}{q}-1}
\\ & \asymp  \frac{1}{(1-|z|^2)^{\e\left(\frac{p}{q}-1 \right)}} \int_\D \frac{\abs{g(\z)}^p}{\abs{1-\overline{z}\z}^{pN-2\left( \frac{p}{q}-1\right) }} \mug(\z)^p dA_{pn-\e}(\z).
\end{split}\end{equation*}
Finally, bearing in mind that  $(n-N)p+2\frac{p}{q}+2-\ep<1$ and $n>\frac{\ep-2}{p}$, the above estimate
 together with \eqref{eq:bpmu2}, Fubini's theorem and a another application of \cite[Lemma~3.10]{Zhu} yields
\begin{equation*}\begin{split}
 \| f\|^p_{B_p}
& \lesssim 
\| f\|^p_{B_{p,\mu}} + \int_\D \int_\D \frac{\abs{g(\z)}^p}{\abs{1-\overline{z}\z}^{pN-2\left( \frac{p}{q}-1\right) }} \mug(\z)^p dA_{pn-\e}(\z)
    \frac{dA(z)}{ (1-|z|^2)^{(n-N)p+2\frac{p}{q}+2-\ep} }
\\ & =  \| f\|^p_{B_{p,\mu}} +  \int_\D \abs{g(\z)}^p  \mug(\z)^p   \int_{\D}   \frac{dA(z)}{ (1-|z|^2)^{(n-N)p+2\frac{p}{q}+2-\ep}  \abs{1-\overline{z}\z}^{pN-2\left( \frac{p}{q}-1\right) }}     dA_{pn-\e}(\z)
\\ & \asymp 
 \| f\|^p_{B_{p,\mu}}.
\end{split}\end{equation*}

{\bf{Case $\mathbf{0<p\le 1}$.}} Take $n\in \N$ such that $np>2-2p$. A similar argument to that of the previous case (but simpler) implies that
\begin{equation}\begin{split}\label{eq:bpmurad1}
 \| f\|^p_{B_p} & \lesssim  \| f\|^p_{B_{p,\mu}} + 
 \int_{\D}\left| R^{V_{\mu^+,n},W^n}(g)(u) \right|^p(1-|u|)^{pn-2} \,dA(u).
\\ & \lesssim  \| f\|^p_{B_{p,\mu}} + 
 \int_{\D}\left (\int_\D \abs{g(\z)}\abs{B^{W^n}_u(\z)}\mug(\z)(1-\abs{\z})^{n}dA(\z) \right )^p (1-|u|)^{pn-2} \,dA(u).
\end{split}\end{equation}
If $p=1$, using  that $\widehat{W^n}(r)\asymp (1-r)^{2n+1}$ by Lemma~\ref{lema pesos tec},  $W^n, (1-|u|)^{pn-2}\in \DDD$,  and  \cite[Theorem~1]{PR2016/1}, it follows that
\begin{equation*}\begin{split}
 \| f\|_{B_1}
& \lesssim \| f\|_{B_{1,\mu}}+ \int_\D \abs{g(\z)}\mug(\z)(1-\abs{\z})^{n}\int_{\D} \abs{B^{W^n}_u(\z)} (1-|u|)^{n-2} \,dA(u) \, dA(\z)  
\\ & \asymp \| f\|_{B_{1,\mu}}+\int_\D \abs{g(\z)}\mug(\z)(1-\abs{\z})^{n} \left(1+\int_0^{|\z|} \frac{dt}{(1-t)^{n+3}} \right) \, dA(\z)  
\\ & \asymp \| f\|_{B_{1,\mu}}, \quad f \in \H(\D).
\end{split}\end{equation*}
Next if $0<p<1$ and $\mu\in \DD$,  take $\eta(s)=\mug(s)^p(1-s)^{(n+2)p-2}$. Then $\eta \in \DDD$ and  
\begin{equation*}\begin{split}
\eta(|\z|)\left( \widehat{\eta}\left(\frac{1+|\z|}{2}\right)(1-|\z|)\right)^{\frac{1-p}{p}}
& \asymp 
\eta(|\z|)\left( \widehat{\eta}\left(|\z|\right)(1-|\z|)\right)^{\frac{1-p}{p}}
\\ & \asymp \mug(\z)(1-\abs{\z})^{n}, \z\in \D.
\end{split}\end{equation*}
So by \eqref{eq:bpmurad1},  Lemma~\ref{le:qp}  and \cite[Theorem~1]{PR2016/1},  and bearing in mind that $np>2-2p$
\begin{equation*}\begin{split}
 \| f\|^p_{B_p} & \lesssim  \| f\|^p_{B_{p,\mu}} + 
 \int_{\D} \int_\D \abs{g(\z)}^p\abs{B^{W^n}_u(\z)}^p\mug(\z)^p(1-\abs{\z})^{np+2p-2} \,dA(\z)  (1-|u|)^{pn-2} \,dA(u)
\\ & =   \| f\|^p_{B_{p,\mu}} +  \int_\D \abs{g(\z)}^p\mug(\z)^p (1-\abs{\z})^{np+2p-2}\int_{\D} \abs{B^{W^n}_u(\z)}^p (1-|u|)^{pn-2} \,dA(u) \, dA(\z) 
\\ & \asymp \| f\|^p_{B_{p,\mu}} +  \int_\D \abs{g(\z)}^p\mug(\z)^p (1-\abs{\z})^{np+2p-2} 
\left(1+\int_0^{|\z|} \frac{dt}{(1-t)^{np+2p+1}} \right) \, dA(\z) 
\\ & \asymp \| f\|^p_{B_{p,\mu}},  \quad f \in \H(\D).
\end{split}\end{equation*}
This finishes the proof.
\end{proof}

\begin{theorem}\label{th:BpmuD}
 Let  $1\le p<\infty$ and $\mu$ be  a radial weight. Then, the following conditions are equivalent:
\begin{enumerate}
\item[\rm(i)] $\frac{\widehat{\mu}(z)^p}{(1-|z|)^2} \in \mathcal{D}$;  
\item[\rm(ii)] \begin{equation}\label{eq:bpmuds}
	 \|g\|_{B_{p,\mu}} \asymp \|g\|_{B_p},\quad g \in\mathcal{H}(\mathbb{D}).
	\end{equation}
\item[\rm(iii)] $$
	 \|g\|_{B_{p,\mu}} \lesssim \|{g}\|_{B_p},\quad g\in\mathcal{H}(\mathbb{D});
	$$
\end{enumerate}
Moreover, if $0<p\le 1$ and $\mu\in \DD$, the previous three conditions are also equivalent.
\end{theorem}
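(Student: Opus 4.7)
The plan splits the proof into the easy chain (ii) $\Leftrightarrow$ (iii) and the essential equivalence (i) $\Leftrightarrow$ (iii). Throughout I write $\omega(z)=\mug(z)^p/(1-|z|)^2$. Trivially (ii) $\Rightarrow$ (iii). Conversely, (iii) together with the reverse inequality $\|g\|_{B_p}\lesssim \|g\|_{B_{p,\mu}}$ from Theorem~\ref{th:Bpmuradial} (valid for $1\le p<\infty$ with $\mu$ any radial weight, and for $0<p\le 1$ if $\mu\in\DD$) yields (ii).

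For (i) $\Rightarrow$ (iii) I would follow the template of Theorem~\ref{th: norma equivalente >}. Reduce to $f\in\H(\overline{\D})$ by dilation and use Corollary~\ref{prop: expresión derivada fraccionaria} to write
$$D^\mu f(z)=P_{n-1}(z)+4^n z^n R^{W^n,V_{\mu^+,n}}(f^{(n)})(z),$$
with $n\ge n_p$ chosen large enough to match the doubling exponents of $\omega$; the polynomial piece $P_{n-1}$ is harmless. Apply Theorem~\ref{th:perala}, use $W^n(\zeta)\asymp (1-|\zeta|)^{2n}$ from Lemma~\ref{lema pesos tec}(i), and split by $p$. For $1<p<\infty$ apply H\"older together with the modified reproducing kernel estimate \cite[Lemma~5]{PelRatBMO} for $B^{V_{\mu^+,n}}_z$, selecting auxiliary parameters $N,\varepsilon$ compatible with the constants $\beta,\eta$ from Lemma~\ref{lemma: caract dcheck} applied to $\omega$. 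For $p=1$ one can bypass H\"older, and for $0<p<1$ one replicates the $A^p$-to-$A^q$ embedding used in the $p\le 1$ case of Theorem~\ref{th:Bpmuradial} via Lemma~\ref{le:qp} and \cite[Theorem~1]{PR2016/1}, which is precisely where $\mu\in\DD$ enters. After the pointwise bound, integrate against $\omega\, dA$, apply Fubini together with the Forelli--Rudin integral estimate \cite[Lemma~2.5]{OrtegaFabrega}, and invoke Lemma~\ref{lemma: caract dcheck}(iii) for $\omega$ to close the chain by $\|f\|_{B_p}^p$.

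For (iii) $\Rightarrow$ (i), test (iii) with monomials $g(z)=z^n$ for $n\ge n_p$. Standard Beta-function asymptotics give $\|z^n\|_{B_p}^p\asymp n$ and $\|z^n\|_{B_{p,\mu}}^p\asymp \mu_{2n+1}^{-p}\int_0^1 r^{np+1}\omega(r)\,dr$, so (iii) produces the moment inequality
$$\int_0^1 r^{np+1}\omega(r)\,dr\lesssim n\,\mu_{2n+1}^p,\quad n\ge n_p,$$
matched below by Theorem~\ref{th:Bpmuradial} applied to the same monomials. Passing from discrete $n$ to continuous $x$ and then mimicking the closing argument of Theorem~\ref{th: norma equivalente >}, an integration by parts (using $\mu_x\asymp x(\mug)_{x-1}$) combined with a suitable H\"older split converts the above into an inequality $\omega_{cx}\lesssim \omega_x$ for some $c<1$ which, iterated finitely many times, yields $\omega_x\lesssim \omega_{2x}$ and hence $\omega\in\DD$ by Lemma~\ref{lemma: caract Dgorro}(iii). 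For $\omega\in\Dd$, lower-bound the test integral on the annulus $[1-1/x,1-1/(Kx)]$ to derive $\widehat{\omega}(1-1/(Kx))\log K\lesssim \widehat{\omega}(1-1/x)$ and choose $K$ sufficiently large, exactly as in the closing step of Theorem~\ref{th: norma equivalente >}.

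The main obstacle I anticipate is the direction (i) $\Rightarrow$ (iii) in the range $0<p\le 1$: H\"older is unavailable to absorb the Bergman kernel, forcing the Bergman-projection argument through Lemma~\ref{le:qp} and \cite[Theorem~1]{PR2016/1}, and it is this step that dictates the extra hypothesis $\mu\in\DD$ in the quasi-Banach range. A secondary difficulty is the parameter bookkeeping, where $n,N,\varepsilon$ must be chosen so that the kernel estimate \cite[Lemma~5]{PelRatBMO}, the Forelli--Rudin-type estimate \cite[Lemma~2.5]{OrtegaFabrega}, and the doubling constants supplied by Lemma~\ref{lemma: caract dcheck}(iii) applied to $\omega$ (rather than to $\mu$) all fit simultaneously; as in Theorems~\ref{th: norma equivalente >} and \ref{th: caracterizacion BMOA >}, this is essentially combinatorial but requires some care.
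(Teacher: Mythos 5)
Your treatment of (ii)$\Leftrightarrow$(iii) via Theorem~\ref{th:Bpmuradial} agrees with the paper, but both substantive implications are handled very differently there, and one step of your plan does not close. For (i)$\Rightarrow$(ii)/(iii) the paper does not rerun the reproducing-kernel machinery of Theorem~\ref{th: norma equivalente >} at all: it first shows that (i) forces $\mu\in\DDD$ (a preliminary your sketch also needs but never supplies, since the kernel estimates of \cite[Lemma~5]{PelRatBMO} and Lemma~\ref{lema pesos tec}(v) for $B^{V_{\mu_+,n}}_z$ require $\mu\in\DD$, not doubling of $\om(r)=\mug(r)^p/(1-r)^2$), and then exploits the identity $D^\mu(F_n)(z)=z^n\bigl(D^\mu(f)\bigr)^{(n)}(z)$ with $F_n(\z)=\z^n f^{(n)}(\z)$, combined with the fractional Littlewood--Paley formula of \cite[Theorem~1]{PelRosa} and the characterization in \cite[Theorem~5]{PR19} of the radial weights for which the classical Littlewood--Paley formula holds. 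This yields \eqref{eq:bpmuds} in a few lines, uniformly in $p$, whereas your kernel route must be rebuilt separately for $1<p<\infty$, $p=1$ and $p<1$; it is plausible but far heavier, and as written it is incomplete without the step (i)$\Rightarrow\mu\in\DDD$.

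The genuine gap is in (iii)$\Rightarrow$(i), at the claim that the monomial test gives $\om\in\Dd$. The annulus lower bound does not produce $\widehat{\om}\bigl(1-\tfrac{1}{Kx}\bigr)\log K$: since $\int_{1-1/x}^{1-1/(Kx)}\frac{ds}{(1-s)^2}=(K-1)x$, what one actually gets is
\begin{equation*}
\int_0^1 s^x\,\om(s)\,ds\;\ge\;C_1\,\mug\Bigl(1-\tfrac{1}{Kx}\Bigr)^p(K-1)x,
\end{equation*}
a statement about $\mug$, not about $\widehat{\om}$. Combined with the upper moment bound this proves $\mu\in\Dd$ (and, together with your $\DD$ argument, $\mu\in\DDD$ and $\om\in\DD$), but that is strictly weaker than (i): for instance $\mug(s)^p=(1-s)\bigl(\log\frac{e}{1-s}\bigr)^{-2}$ gives $\mu\in\DDD$, $\om$ a weight with $\om\in\DD$, yet $\widehat{\om}(r)=\bigl(\log\frac{e}{1-r}\bigr)^{-1}\notin\Dd$. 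Converting the annulus integral into a multiple of $\widehat{\om}\bigl(1-\tfrac{1}{Kx}\bigr)$ would require $\widehat{\om}(r)\asymp\mug(r)^p/(1-r)$, which is essentially the conclusion being sought, so the argument is circular at this point. The paper avoids this by deducing, from $\mu\in\DDD$, \eqref{eq:F}, \cite[Theorem~1]{PelRosa} and \eqref{eq:bpmuds}, the Littlewood--Paley formula for the weight $\om$, and then invoking \cite[Theorem~5]{PR19} to conclude $\om\in\DDD$; some substitute for this bridge is needed in your plan.
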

It is also worth mentioning that the proof of Theorem~\ref{th:BpmuD} reveals that   (i) implies (ii) for any radial weight and $0<p\le 1$.

\begin{proof}

Assume that (i) holds. In order to prove (ii) we will split the proof in several steps.

{\bf{First step.}} Let us prove that $\mu\in \DD$. 

The class $\DDD$ is closed under multiplication by weights of the form $(1-s)^\beta$, $\beta>0$. Therefore choosing $\beta=2$ it follows that $\widehat{\mu}^p\in \DDD$, and in particular 
$\widehat{\mu}^p\in \DD$. Now bearing in mind the identity $\om_x=x(\omg)_{x-1}, x\ge 1$, and  Lemma~\ref{lemma: caract Dgorro},  it follows  that a radial weight $\om\in\DD$ if and only if $\omg\in\DD$.
Therefore, the radial weight $ p\mu(z)\widehat{\mu}(z)^{p-1}\in \DD$, which is equivalent to the fact that $\mu\in \DD$.

{\bf{Second step.}} Let us prove that 
\begin{equation}\label{eq:bpmud1}
\int_{r}^1 \frac{\widehat{\mu}(s)^p}{(1-s)^2}\,ds \asymp \frac{\widehat{\mu}(r)^p}{1-r}, \quad 0\le r<1.
\end{equation} 
Since $\mu\in\DD$, 
$$  \int_{r}^1 \frac{\widehat{\mu}(s)^p}{(1-s)^2}\,ds\ge \int_{r}^{\frac{1+r}2} \frac{\widehat{\mu}(s)^p}{(1-s)^2}\,ds
\gtrsim  \frac{\widehat{\mu}(r)^p}{1-r}, \quad 0\le r<1.
$$
On the other hand since $\frac{\widehat{\mu}(z)^p}{(1-|z|)^2} \in \Dd$, by \cite[Lemma B]{PelRosa} there is $K>1$ and $C>0$ such that
\begin{equation*}\begin{split}
C \int_{r}^1 \frac{\widehat{\mu}(s)^p}{(1-s)^2}\,ds \le \int_{r}^{1-\frac{1-r}{K}} \frac{\widehat{\mu}(s)^p}{(1-s)^2}\,ds\le  (K-1)\frac{\widehat{\mu}(r)^p}{1-r},
\end{split}\end{equation*}
and therefore \eqref{eq:bpmud1} holds.

{\bf{Third step.}} Let us prove that  $\mu\in \Dd$. 

By Lemma~\ref{lemma: caract dcheck} (ii) and \eqref{eq:bpmud1}
 there is $\b>0$ and $C>0$ such that
$$ \frac{\widehat{\mu}(s)^p}{(1-s)^{1+\beta}}\le C \frac{\widehat{\mu}(t)^p}{(1-t)^{1+\beta}}, \quad 0\le t\le s<1.$$
Therefore
$$ \int_0^s \frac{dt}{\widehat{\mu}(t)^p(1-t)}\le C \frac{(1-s)^{1+\beta}}{\widehat{\mu}(s)^p} \int_0^s \frac{dt}{(1-t)^{2+\beta}}\lesssim \frac{1}{\widehat{\mu}(s)^p}, \quad 0\le s<1, $$
which together with  a new application of Lemma~\ref{lemma: caract dcheck}  implies that $\mu\in \Dd$.

{\bf{Fourth step.}} Let us prove the identity \eqref{eq:bpmuds}. Fix $n\in\N$ such that $np>1$. Since $\mu$, $(1-|z|)^{np-2}$ and $\frac{\widehat{\mu}(z)^p}{(1-|z|)^2}$ are radial doubling weights, by \cite[Theorem~1]{PelRosa}, \cite[Theorem~5]{PR19} and
the indentity 
\begin{equation}\label{eq:F}
D^\mu(F_n)(z)=z^n(D^\mu(f))^{(n)}(z), \quad  F_n(\zeta)=\zeta^n f^{(n)}(\z),\, z, \z\in \D,\quad n\in \N,
\end{equation}
it follows that
\begin{equation*}\begin{split}
\|f\|_{B_p}^p  &\asymp \sum_{j=0}^{n-1}|f^{(j)}(0)|^p+ \int_{\D} |F_n(z)|^p(1-|z|)^{np-2}\,dA(z)
\\ & \asymp \sum_{j=0}^{n-1}|f^{(j)}(0)|^p + \int_{\D} |D^\mu(F_n)(z)|^p \widehat{\mu}^p(z) (1-|z|)^{np-2}\,dA(z)
\\ & = \sum_{j=0}^{n-1}|f^{(j)}(0)|^p+ \int_{\D} |z|^{np} |(D^\mu(f))^{(n)}(z)|^p \widehat{\mu}^p(z) (1-|z|)^{np-2}\,dA(z)
\\ & \asymp \sum_{j=0}^{n-1}|f^{(j)}(0)|^p + \int_{\D}  |(D^\mu(f))^{(n)}(z)|^p \widehat{\mu}^p(z) (1-|z|)^{np-2}\,dA(z)
\\ & \asymp  \|f\|_{B_{p,\mu}}^p.
\end{split}\end{equation*}

Now let us prove (ii)$\Rightarrow(i)$.

{\bf{First step.}} Let us prove that $\mu\in \DD$ if $1<p<\infty$.

By choosing $f(z)=z^n$ in \eqref{eq:bpmuds}, it follows that 
$$ \int_0^1 s^{np+1} \frac{\widehat{\mu}(s)^p}{(1-s)^2}\,ds \lesssim n (\mu_{2n+1})^p, \quad n\in \N\cup\{0\},$$
and therefore
\begin{equation}\label{eq:bpmud4}
\int_0^1 s^{x} \frac{\widehat{\mu}(s)^p}{(1-s)^2}\,ds \lesssim x (\mu_{\frac{2x}{p}})^p, \quad x\ge 1.
\end{equation}
So for any $t\in (0,1)$ 
\begin{equation*}\begin{split}
(\mu_x)^p & \asymp x^p (\widehat{\mu}_x)^p
\\ & \le   x^p \int_0^1 s^{tpx} \frac{\widehat{\mu}(s)^p}{(1-s)^2}\,ds \left(  \int_0^1 s^{(1-t)p'x} (1-s)^{\frac{2p'}{p}}\,ds \right)^{\frac{p}{p'}}
\\ & \asymp x^{-1} \int_0^1 s^{tpx} \frac{\widehat{\mu}(s)^p}{(1-s)^2}\,ds
\lesssim (\mu_{2tx})^p,\quad  x\ge 1.
\end{split}\end{equation*}
Therefore, taking $t=3/4$ and iterating the obtained inequality
$$\mu_x\lesssim \mu_{\frac{3x}{2}}\lesssim  \mu_{\frac{9x}{4}}\le \mu_{2x}, \quad  x\ge 1,$$
so $\mu\in \DD$ by Lemma~\ref{lemma: caract Dgorro}.

{\bf{Second step.}} Let us prove that $\mu\in \Dd$. If $K>1$, 
\begin{equation*}\begin{split}
\int_0^1 s^{x} \frac{\widehat{\mu}(s)^p}{(1-s)^2}\,ds & \ge \int_{1-\frac{1}{x}}^{1-\frac{1}{Kx}} s^{x} \frac{\widehat{\mu}(s)^p}{(1-s)^2}\,ds
 \ge C_1  \widehat{\mu}\left(1-\frac{1}{Kx}\right)^p(K-1)x, \quad x\ge 2,
\end{split}\end{equation*}
where $C_1=\inf_{x\ge 2}\left( 1-\frac{1}{x}\right)^x>0$.
The above inequality, together with \eqref{eq:bpmud4}, the fact that $\mu\in \DD$ and Lemma~\ref{lemma: caract Dgorro}, implies that there is $C_2=C_2(\mu)>0$ such that
\begin{equation*}\begin{split}
C_1  \widehat{\mu}\left(1-\frac{1}{Kx}\right)^p(K-1)x &\le  \int_0^1 s^{x} \frac{\widehat{\mu}(s)^p}{(1-s)^2}\,ds 
 \le C_2 x \widehat{\mu}\left(1-\frac{1}{x} \right) ^p, \quad x\ge 2.
\end{split}\end{equation*}
That is,
$$\widehat{\mu}(s)\ge \left( \frac{(K-1)C_1}{C_2}\right)^{1/p}\widehat{\mu}\left(1 - \frac{1-s}{K}\right), \quad \frac{1}{2}\le s<1.$$
So, choosing $K>\frac{C_2}{C_1}+1$, it follows that $\mu\in \Dd$.
 
{\bf{Third step.}} Let us prove that  $\frac{\widehat{\mu}(z)^p}{(1-|z|)^2}\in \DDD$. 

Fix $n\in \N$ such that $np>1$.  Since $\mu$ and $(1-|z|)^{np-2}$ are radial doubling weights, by \eqref{eq:F},  \cite[Theorem~1]{PelRosa} and  \eqref{eq:bpmuds}
\begin{equation*}\begin{split}
&\sum_{j=0}^{n-1}|(D^\mu(f))^{(j)}(0)|^p+ \int_{\D}  |(D^\mu(f))^{(n)}(z)|^p \widehat{\mu}^p(z) (1-|z|)^{np-2}\,dA(z)
\\ & \asymp  \sum_{j=0}^{n-1}|f^{(j)}(0)|^p+ \int_{\D} |z|^{np} |(D^\mu(f))^{(n)}(z)|^p \widehat{\mu}^p(z) (1-|z|)^{np-2}\,dA(z)
\\ & = \sum_{j=0}^{n-1}|f^{(j)}(0)|^p + \int_{\D} |D^\mu(F_n)(z)|^p \widehat{\mu}^p(z) (1-|z|)^{np-2}\,dA(z)
\\ & \asymp \sum_{j=0}^{n-1}|f^{(j)}(0)|^p+ \int_{\D} |F_n(z)|^p(1-|z|)^{np-2}\,dA(z)
\\ & \asymp \|f\|_{B_p}^p 
 \asymp \int_{\D}  |D^\mu(f)(z)|^p \widehat{\mu}^p(z) (1-|z|)^{-2}\,dA(z),\quad f\in \H(\D).
\end{split}\end{equation*}
That is, 
\begin{equation*}\begin{split}
& \sum_{j=0}^{n-1}|g^{(j)}(0)|^p+ \int_{\D}  |g^{(n)}(z)|^p \widehat{\mu}^p(z) (1-|z|)^{np-2}\,dA(z)
\\ & \asymp
 \int_{\D}  |g(z)|^p \widehat{\mu}^p(z) (1-|z|)^{-2}\,dA(z),\quad g\in \H(\D).
\end{split}\end{equation*}
So, by \cite[Theorem~5]{PR19} $\frac{\widehat{\mu}(z)^p}{(1-|z|)^2}\in \DDD$. 


The equivalence between (ii) and (iii) follows from Theorem~\ref{th:Bpmuradial}. 

\end{proof}

\section{Schatten p-classes of Fractional Volterra-type operator.}\label{s8}
Throughout this section we shall use the notation $A^2_{-1}=H^2$ and $dA_{-1}(z)=\frac{dA(z)}{1-|z|^2}$.
We recall that a  sequence $\{z_\lambda\}_\lambda\subset \D$ is called $r$-hyperbolically  separated if there exists  a constant $r>0$ such that $\beta(z_k,z_\lambda)\geq r $ for $k\neq\lambda$, while is said to be an $r$-lattice in the hyperbolic distance, if it is $r/2$-separated and  
$$\mathbb{D} = \bigcup_k D(z_k, r).$$
 Now let us recall some results on singular values and  Schatten classes which will be used in the proofs of our 
results. In fact, 
a characterization of the singular values is given by the Rayleigh's equation. Let $n\in\N$, and $\lambda_n(T)$ be the $n$-th singular value of the positive operator $T$. Then,
\begin{equation}\label{Min-Max Theorem}
    \lambda_{n+1}=\min_{x_1,\dots,x_n}\max\{\langle Tx,x\rangle\colon \|x\|=1, x\perp x_i, 1\leq i\leq n\}\,. 
\end{equation}
A proof of this result can be found in \cite[Ch. X.4.3]{DS}.
On the other hand, the Spectral Mapping theorem imply that the condition $T\in S^p(\mm{H})$ is equivalent to $T^*T\in S^{\frac{p}{2}}(\mm{H})$ and
\begin{equation}\label{equation T*T norm}
    \|T\|_p^p=\|T^*T\|_{S^{p/2}}^{p/2}\,
\end{equation}
see \cite[Theorem~1.26]{Zhu}.

\begin{theorem}\label{th:schattendug}
Let $\mu\in \DDD$, $\alpha\ge -1$, $0<p<\infty$ and $g\in \H(\D)$. Then,
\begin{enumerate}
\item[\rm(i)] If $\int_0^1 \frac{\mug(r)^p}{(1-r)^2}\,dr=+\infty$, then $V_{\mu,g}\in S_p(A^2_\alpha)$ if and only if $g=0$.
 \item[\rm(ii)] If $\frac{\mug(r)^p}{(1-r)^2}$ is a radial weight, then $V_{\mu,g}\in S_p(A^2_\alpha)$ if and only if $g\in B_{p,\mu}$, and
$$\| V_{\mu,g}\|_{S_p(A^2_\alpha)}\asymp \|g\|_{B_{p,\mu}}.$$
\end{enumerate}
\end{theorem}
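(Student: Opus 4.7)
The plan is to reduce Schatten class membership of $V_{\mu,g}$ to a discrete condition via a Toeplitz operator, combining three ingredients: a Littlewood-Paley identity in terms of $D^\mu$, the min-max principle, and Luecking's theorem. I would first exploit the elementary Taylor-coefficient identity $D^\mu V_{\mu,g}(f)=f\cdot D^\mu(g)$. Combined with Corollary~\ref{coro: H2} when $\alpha=-1$, and with the natural Bergman analogue $\|h\|_{A^2_\alpha}^2\asymp\int_\D|D^\mu(h)(z)|^2\mug(z)^2\,dA_\alpha(z)$ for $\alpha>-1$ (which follows by adapting the arguments of Sections~\ref{s3}--\ref{s4} via the change-of-derivative identities of Section~\ref{s2}), this gives
\begin{equation*}
\|V_{\mu,g}(f)\|_{A^2_\alpha}^2\asymp\int_\D|f(z)|^2\,d\nu_g(z),\qquad d\nu_g(z)=|D^\mu(g)(z)|^2\,\mug(z)^2\,dA_\alpha(z).
\end{equation*}

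Second, I would apply the min-max principle \eqref{Min-Max Theorem} to the positive self-adjoint operator $V_{\mu,g}^*V_{\mu,g}$ and to the Toeplitz operator $T_{\nu_g}$ associated to $\nu_g$ via the reproducing kernel of $A^2_\alpha$, so that $\langle T_{\nu_g}f,f\rangle=\int_\D|f|^2\,d\nu_g$. The comparability of the two quadratic forms propagates to the singular value sequences; together with \eqref{equation T*T norm} this yields
\begin{equation*}
V_{\mu,g}\in S_p(A^2_\alpha)\iff T_{\nu_g}\in S_{p/2}(A^2_\alpha),\qquad \|V_{\mu,g}\|_{S_p(A^2_\alpha)}^p\asymp \|T_{\nu_g}\|_{S_{p/2}(A^2_\alpha)}^{p/2}.
\end{equation*}
Then I would invoke Luecking's theorem \cite{Lu87} to characterize $T_{\nu_g}\in S_{p/2}(A^2_\alpha)$ by the $\ell^{p/2}$-summability of the sequence $\{\nu_g(D(z_k,r))/(1-|z_k|)^{\alpha+2}\}_k$ over any $r$-lattice $\{z_k\}$. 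The hyperbolic estimates \eqref{geometric hyperbolic estimates}, together with $\mug\in\DDD$ (which gives $\mug(z)\asymp\mug(z_k)$ on each $D(z_k,r)$) and the sub-mean-value inequality for $|D^\mu(g)|^2$ combined with Luecking's averaging on slightly enlarged discs, reduce this to
\begin{equation*}
\frac{\nu_g(D(z_k,r))}{(1-|z_k|)^{\alpha+2}}\asymp |D^\mu(g)(z_k)|^2\,\mug(z_k)^2,
\end{equation*}
so that $V_{\mu,g}\in S_p(A^2_\alpha)$ is equivalent to $\sum_k(|D^\mu(g)(z_k)|\mug(z_k))^p<\infty$, with comparable norms.

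Finally, I would compare this discrete quantity to $\|g\|^p_{B_{p,\mu}}=\int_\D|D^\mu(g)|^p\mug^p/(1-|z|)^2\,dA$. Sub-mean-value for $|D^\mu(g)|^p$ combined with bounded overlap of hyperbolic discs gives $\sum_k|D^\mu(g)(z_k)|^p\mug(z_k)^p\lesssim\|g\|^p_{B_{p,\mu}}$; the reverse inequality follows from a Luecking-type reverse averaging on slightly enlarged discs. For part (b), when $\mug^p/(1-|z|)^2$ is a weight, this yields $\|V_{\mu,g}\|_{S_p(A^2_\alpha)}^p\asymp\|g\|^p_{B_{p,\mu}}$. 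For part (a), suppose for contradiction that $V_{\mu,g}\in S_p(A^2_\alpha)$ and $g\neq 0$. Then $D^\mu(g)\not\equiv 0$, and the monotonicity of $M_p(r,D^\mu(g))$ supplies some $r_0\in(0,1)$ with $M_p^p(r_0,D^\mu(g))>0$, so
\begin{equation*}
\|g\|^p_{B_{p,\mu}}\gtrsim \int_{r_0}^1 M_p^p(r,D^\mu(g))\,\frac{\mug(r)^p}{(1-r)^2}\,dr\gtrsim \int_{r_0}^1\frac{\mug(r)^p}{(1-r)^2}\,dr=+\infty,
\end{equation*}
contradicting the already-established bound $\|g\|^p_{B_{p,\mu}}\lesssim\|V_{\mu,g}\|_{S_p(A^2_\alpha)}^p<\infty$.

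The main obstacle I expect is the lower bound equivalence between the Luecking discrete sum and $\|g\|^p_{B_{p,\mu}}$: since $\mug^p/(1-|z|)^2$ is not assumed to be doubling, standard discretization results do not apply directly, and a delicate Luecking-type reverse averaging on enlarged hyperbolic discs is required. A secondary difficulty is adapting Luecking's characterization to $H^2$ when $\alpha=-1$, which amounts to replacing the Bergman reproducing kernel by the Szeg\H{o} kernel in the definition of $T_{\nu_g}$ and verifying that Luecking's averaged-measure criterion continues to hold in that setting.
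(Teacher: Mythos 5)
Your proposal is correct and follows essentially the same route as the paper: the identity $D^\mu V_{\mu,g}(f)=f\cdot D^\mu(g)$ plus the Littlewood--Paley description of $A^2_\alpha$ via $D^\mu$, the min-max principle and \eqref{equation T*T norm} to pass to the Toeplitz operator $T_{\nu_g}\in S_{p/2}$, Luecking's lattice criterion, and then the comparison of the discrete sum with $\|g\|_{B_{p,\mu}}^p$ using $\mug(z)\asymp\mug(z_k)$ on $D(z_k,r)$ (the paper cites the argument of \cite[Theorem~2]{AS} for this last step, exactly the reverse-averaging you describe). The only caveat is that your displayed pointwise asymptotic $\nu_g(D(z_k,r))/(1-|z_k|)^{\alpha+2}\asymp|D^\mu(g)(z_k)|^2\mug(z_k)^2$ holds only as $\gtrsim$ term by term, with the converse valid after summation over the lattice via the enlarged-disc argument you already invoke, so this is a matter of phrasing rather than a gap.
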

\begin{proof} 
If $g\in B_{p,\mu}$, 
by Theorem~\ref{th:BpmuD} $g\in B_{p}$ and therefore  $g\in \VMOA$ \cite[Lemma 10.13]{Zhu} and by Theorem~\ref{th: caracterizacion BMOAmain} if $\a=-1$, Proposition~\ref{propo: bloch fraccionaria} if $\a>-1$,  $|D^\mu(g)(z)|^2 \widehat{\mu}(z)^2 dA_\alpha(z)$ is a Carleson measure for $A^2_\alpha$. On the other hand,
if $V_{\mu,g}\in S_p(A^2_\alpha)$, then $V_{\mu,g}$ is bounded on $A^2_\alpha$  and by Theorem~\ref{th:Vmubounded} if $\a=-1$, Theorem~\ref{th:bergmanVmuboundedcompact} if $\a>-1$, $|D^\mu(g)(z)|^2 \widehat{\mu}(z)^2 dA_\alpha(z)$ is a Carleson measure for $A^2_\alpha$.
\par So,  we may assume that $|D^\mu(g)(z)|^2 \widehat{\mu}(z)^2 dA_\alpha(z)$ is a Carleson measure for $A^2_\alpha$. Let $\{z_j\}_j\subset \D$ be an $r$-lattice on $\D$. We wiil prove that
\begin{equation}\label{eq 1}
\| V_{\mu,g}\|^p_{S_p(A^2_\alpha)}\asymp \sum_{\lambda=1}^{\infty}\left(\frac{1}{(1-|z_\lambda|^2)^2}\int_{D(z_\lambda,r)}|D^\mu(g)(z)|^2\widehat{\mu}(z)^2dA(z)\right)^{p/2}.
\end{equation}
First of all, we define the  sesquilinear form
$$\mathcal{V}_g(f,h):=\int_{\D}f(z)\overline{h(z)}|D^\mu(g)(z)|^2 \widehat{\mu}(z)^2 dA_\alpha(z).$$

Since $|D^\mu(g)(z)|^2 \widehat{\mu}(z)^2 dA_\alpha(z)$ is a Carleson measure for $A^2_\alpha$, 
\begin{align*}
    |\mathcal{V}_g(f,h)|^2&=\left|\int_{\D}f(z)D^\mu(g)(z)\overline{h(z)D^\mu(g)(z)}\widehat{\mu}(z)^2 dA_\alpha(z)\right|^2\\
    &\leq \int_{\D}|f(z)|^2|D^\mu(g)(z)|^2\widehat{\mu}(z)^2 dA_\alpha(z) \int_{\D}|h(z)|^2|D^\mu(g)(z)|^2\widehat{\mu}(z)^2 dA_\alpha(z)\\
    &\lesssim \|f\|_{A^2_\alpha}^2\|h\|_{A^2_\alpha}^2\,, \quad f,h\in A^2_\alpha.
\end{align*}
That is $\mathcal{V}_g$ is a bounded sesquilinear form on $A^2_\alpha\times A^2_\alpha$.
By  Riesz representation theorem  there exists an operator $\mathcal{V}_g\colon A^2_\alpha\to A^2_\alpha$, (we denote it as the sesquilinear form) such that
$$\langle \mathcal{V}_g(f),h\rangle_{A^2_\alpha} =\mathcal{V}_g(f,h)\qquad f,h\in A^2_\alpha\,.$$
Based on the definition, it is easy to see that $\mathcal{V}_g(f,f)\geq 0$ and consequently, $ \mathcal{V}_g$ is a positive operator. Moreover by Theorem~\ref{th:main} if $\a=-1$, \cite[Theorem 1]{PelRosa} if $\a>-1$, for every $f\in A^2_\alpha$, we have that
\begin{align*}
    \langle \mathcal{V}_g(f),f\rangle_{A^2_\alpha} &=\int_{\D}|f(z)|^2 |D^\mu(g)(z)|^2\widehat{\mu}(z)^2 dA_\alpha(z) \asymp \|V_{\mu,g}(f)\|_{A^2_\alpha}^2=\langle (V_{\mu,g})^*V_{\mu,g}(f),f\rangle_{A^2_\alpha} .
\end{align*}
Thus, there exist two constants $C_1,C_2$ depending on $\mu$, such that
$$C_1\langle (V_{\mu,g})^*V_{\mu,g}(f),f\rangle_{A^2_\alpha} \leq \langle \mathcal{V}_g(f),f\rangle_{A^2_\alpha} \leq C_2\langle (V_{\mu,g})^*V_{\mu,g}(f),f\rangle_{A^2_\alpha} 
\qquad f\in A^2_\alpha \,.$$
An application of min-max theorem of eigenvalues of positive operators and the fact that $\|V_{\mu,g}\|_{S^p}^p=\| (V_{\mu,g})^*V_{\mu,g}\|_{S^{p/2}}^{p/2}$ imply that
$$V_{\mu,g}\in S^p(A^2_\alpha)\iff \mathcal{V}_g\in S^{p/2}(A^2_\alpha)$$
and the corresponding Schatten norms are comparable with constants depending only on $\mu$.

Next, let is observe that if $B^\alpha_z$ is the reproducing kernel of $A^2_\alpha$, we have that
\begin{align*}
    \mathcal{V}_g(f)(z)&=\langle \mathcal{V}_g(f), B^\alpha_z\rangle_{A^2_\alpha}\\
    &= \mathcal{V}_g(f,B^\alpha_z)\\
    &= \int_{\D}f(w)|D^\mu(g)(w)|^2\overline{ B^\alpha_z(w)}\widehat{\mu}(z)^2 dA_\alpha(z)\\
    &=\int_{\D}f(w)\frac{1}{(1-\overline{w}z)^{2+\alpha}}d\mu_g(z)\\
    &= T_{\mu_g}(f)(z)\,,
\end{align*}
that is $ \mathcal{V}_g$ is nothing else but the Toeplitz operator induced by the measure  $d\mu_{g,\alpha}(z)=|D^\mu(g)(z)|^2 \widehat{\mu}(z)^2 dA_\alpha(z) $. Therefore, by \cite[Theorem p. 347]{Lu87} and the finite overlapping property of the hyperbolic discs $D(z_j,r)$ of a $r$-lattice $\{z_j\}_{j=1}^\infty$ (see \cite[Lemma~4.7]{Zhu}) it follows that
$$\|\mathcal{V}_g\|^{p/2}_{S^{p/2}(A^2_\alpha)}\asymp  \sum_{j=1}^{\infty}\left(\frac{\mu_{g,\alpha}(D(z_j,r))}{(1-|z_j|^{2+\alpha}}\right)^{p/2}$$
for any $r$-lattice $\{z_j\}_{j=1}^\infty$ .
Since
$$\mu_{g,\alpha}(D(z_j,r))=\int_{D(z_j,r)}|D^\mu(g)(z)|^2\widehat{\mu}(z)^2 dA_\alpha(z) \asymp (1-|z_j|^2)^{\alpha}\int_{D(z_j,r)}|D^\mu(g)(z)|^2\widehat{\mu}(z)^2dA(z)\,,$$
\eqref{eq 1} holds.

\par Observe that  if $\int_0^1 \frac{\mug(r)^p}{(1-r)^2}\,dr=+\infty$, 
$$\int_{\D}|D^\mu(g)(z)|^p\widehat{\mu}(z)^p\frac{dA(z)}{(1-|z|^2)}<\infty  \iff D^\mu(g)=0\iff g=0.$$
Therefore, it is left to prove that the condition on the lattice implies the integral condition on $\D$, that is
\begin{equation}
\label{eq 2}
   \int_{\D}|D^\mu(g)(z)|^p\widehat{\mu}(z)^p\frac{dA(z)}{(1-|z|^2)}\asymp\sum_{j=1}^{\infty}\left(\frac{\mu_g(D(z_j,r))}{(1-|z_j|^2)}\right)^{p/2}.
\end{equation}

\par Since $\mu\in \DD$, for each $r \in (0,1)$ 
\begin{equation}\label{eq:mugconstant}
\mug(z)\asymp \mug(w), \quad w \in D(z,r),
\end{equation}
where the constants involved only depend on $\mu$ and $r$. Therefore, \eqref{eq 2} can be proved mimicking the proof of \cite[Theorem~2]{AS}. 
This finishes the proof. 
\end{proof}

\begin{Prf}{\em{ Theorem~\ref{th:schattendugintro}}.}
Part (a) and the equivalence between (i) and (ii) in part (b) follows from Theorem~\ref{th:schattendug}. Moreover, since $\mu\in \DDD$, then $\mug^p\in \DD$.  This together with the fact that $ \frac{\mug(r)^p}{(1-r)^2}$ is a weight implies that
 $ \frac{\mug(r)^p}{(1-r)^2}\in \DD$. So, by hypotheses $ \frac{\mug(r)^p}{(1-r)^2}\in \DDD$. Therefore, by Theorem~\ref{th:BpmuD},  (i) and (ii) are equivalent to the condition $g\in B_p$ and
$$\| V_{\mu,g}\|\asymp \|g\|_{B_{p,\mu}} \asymp \|g\|_{B_p},\quad g\in \H(\D).$$
This finishes the proof.
\end{Prf}
\section{Statements of Declaration}
The authors do not have any financial or non financial interest directly or indirectly related to this article.

\end{document}